\theoremstyle{plain}
\newtheorem{theorem}{Theorem}[section]
\newtheorem{lemma}[theorem]{Lemma}
\newtheorem{prop}[theorem]{Proposition}
\theoremstyle{definition}
\newtheorem{remark}[theorem]{Remark}
\newtheorem{cor}[theorem]{Corollary}
\theoremstyle{remark}
\begin{document}

\title [On a new norm on $\mathcal{B}({\mathcal{H}})$  and numerical radius inequalities]{On a new norm on $\mathcal{B}({\mathcal{H}})$ and its applications to numerical radius inequalities } 

\author[ D. Sain, P. Bhunia, A. Bhanja and K. Paul]{D. Sain, P. Bhunia, A. Bhanja and K. Paul }

\address[Sain]{Department of Mathematics\\ Indian Institute of Science\\ Bengaluru 560012\\ Karnataka\\ INDIA}
\email{saindebmalya@gmail.com}

\address[Bhunia] { Department of Mathematics, Jadavpur University, Kolkata 700032, West Bengal, India}
\email{pintubhunia5206@gmail.com}

\address[Bhanja]{Department of Mathematics\\ Vivekananda College Thakurpukur\\ Kolkata\\ West Bengal\\India\\ }
\email{aniketbhanja219@gmail.com}

\address[Paul] {Department of Mathematics, Jadavpur University, Kolkata 700032, West Bengal, India}
\email{kalloldada@gmail.com}

%\thanks will become a 1st page footnote.
\thanks{ Dr. Debmalya Sain feels elated to acknowledge the remarkable contribution of his beloved friend Subhajyoti Sarkar in his life. Mr. Pintu Bhunia would like to thank UGC, Govt. of India for the financial support in the form of SRF. Prof. Kallol Paul would like to thank RUSA 2.0, Jadavpur University for  partial support. }
%\thanks{}
%\thanks{}
%    Information for second author

    %General info

\subjclass[2010]{Primary 47A30, 47A12;  Secondary 47A63.}
\keywords{Numerical radius; bounded linear operator; operator inequalities; Hilbert space.}

%\date{}
\maketitle
\begin{abstract}

We introduce  a new norm on the space of bounded linear operators on a complex Hilbert space, which generalizes the numerical radius norm, the usual operator norm and the modified Davis-Wielandt radius. We study basic properties of this norm, including the upper and the lower bounds for it. As an application of the present study, we estimate bounds for the numerical radius of bounded linear operators. We illustrate with examples that our results improve on some of the important existing numerical radius inequalities.
\end{abstract}

\section{\textbf{Introduction}}
\noindent The purpose of the present article is to introduce a new norm, christened the $(\alpha,\beta)$-norm, on the space of bounded linear operators on a complex Hilbert space which generalizes the numerical radius norm, the usual operator norm and the recently introduced  modified Davis-Wielandt radius (see \cite{BSP}).  We study some important properties of the  $(\alpha,\beta)$-norm, and obtain upper and lower bounds for the said norm. This allows us to obtain some interesting numerical radius inequalities, that improve the existing results. Let us first introduce the following notations and terminologies.\\  

\noindent Let $\mathcal{B}({\mathcal{H}})$ denote the $C^{*}$-algebra of all bounded linear operators on a complex Hilbert space $\mathcal{H}$ with usual inner product $\langle.,.\rangle.$  For $T\in \mathcal{B}({\mathcal{H}})$, $T^*$ denotes the adjoint of $T$ and $|T|$ stands for the positive operator $(T^*T)^{\frac{1}{2}}$. The numerical range of $T$,  denoted by $W(T)$,  is  defined as the collection of all complex scalars $\langle Tx,x \rangle$ with $\|x\|=1$, i.e., $W(T)=\{\langle Tx,x \rangle:x\in \mathcal{H}, \|x\|=1\}.$ Let $\sigma (T)$ denote the spectrum of $T$. The usual operator norm, the numerical radius, the Crawford number and the spectral radius of $T$,  denoted respectively by $\|T\|, w(T), c(T)$ and $r(T)$,  are defined as follows:
\begin{eqnarray*}
\|T\|&=& \sup \{\|Tx\| : x\in \mathcal{H}, \|x\|=1\},\\
w(T)&=& \sup \{|c|: c\in W(T) \},\\
c(T)&=& \inf \{|c|: c\in W(T)\}, \\
r(T)&=& \sup \{|\lambda|: \lambda \in \sigma (T)\}.
\end{eqnarray*}
Clearly, $r(T)\leq w(T)$. Let $M_T$ and $c_T$ denote the norm attainment set and the Crawford number attainment set of $T$, respectively, i.e., 
\begin{eqnarray*}
M_T&=& \{x\in \mathcal{H}: \|Tx\|=\|T\|, \|x\|=1\},\\
c_T&=& \{x\in \mathcal{H}: c(T)=|\langle Tx,x \rangle|, \|x\|=1\}.
\end{eqnarray*}
For $T\in \mathcal{B}({\mathcal{H}})$, let $Re(T)$ and $Im(T)$ denote the real part  and the imaginary part of $T$ respectively, i.e., $Re(T)=\frac{1}{2}(T+T^*)$ and $Im(T)=\frac{1}{2i}(T-T^*)$. It is well-known that 
$$w(T)=\sup_{\theta \in \mathbb{R}}\|Re(e^{i\theta}T)\|=\sup_{\theta \in \mathbb{R}}\|Im(e^{i\theta}T)\|.$$
Also we know that $w(.)$ defines a norm on $\mathcal{B}({\mathcal{H}})$ and is equivalent to the usual operator norm, satisfying the following inequality:
\[\frac{1}{2}\|T\| \leq w(T)\leq \|T\|, ~~T\in \mathcal{B}({\mathcal{H}}).\]
The above inequality is sharp. The first inequality becomes equality if $T^2=0$ and the second inequality becomes equality if $T$ is normal. Kittaneh \cite{K03,K05} improved on the above inequality to show that 
$$w(T) \leq \frac{1}{2} \Big \| |T| + |T^*| \Big \| \leq \frac{1}{2}\|T\|+ \frac{1}{2}\|T^2\|^{\frac{1}{2}}$$ and $$\frac{1}{4}\|T^*T+TT^*\| \leq w^2(T) \leq \frac{1}{2}\|T^*T+TT^*\|.$$
Abu-Omar and Kittaneh in \cite[Th. 2.4]{OK2} proved that
 \[w^2(T)\leq \frac{1}{2}w(T^2)+\frac{1}{4}\left \|T^*T+ TT^*\right \|,\] which improves on the above upper bounds. A lot of study has been done in this direction to improve upper and lower bounds for the numerical radius, and we refer the readers to \cite{BP,BBP1,BBP,PB,SMY,Y}, and the references therein, for a comprehensive idea of the current state of the art.

\noindent Let us now introduce the $(\alpha,\beta)$-norm on $\mathcal{B}({\mathcal{H}})$. Throughout the paper we reserve $\alpha, \beta$ for non-negative real scalars, i.e., $\alpha, \beta \geq 0, $ such that $ ( \alpha, \beta ) \neq (0,0).$  Let us consider a mapping $ \|.\|_{\alpha,\beta}: \mathcal{B}({\mathcal{H}})\rightarrow \mathbb{R}^+$, defined as follows: $$\|T\|_{\alpha,\beta}=\sup \left \{\sqrt{  \alpha |\langle Tx ,x \rangle |^2+\beta \|Tx\|^2 }: x\in \mathcal{H},\|x\|=1 \right \}.$$
We observe that  $\|.\|_{\alpha,\beta}$ defines a norm on $\mathcal{B}({\mathcal{H}})$. We also observe that if $\alpha=1, \beta=0$ then $\|T\|_{\alpha,\beta}=w(T)$ and if $\alpha=0, \beta=1$ then $\|T\|_{\alpha,\beta}=\|T\|$. Moreover, if we consider $\alpha=\beta=1$ then we get the modified Davis-Wielandt radius of $T$, i.e.,  $\|T\|_{\alpha,\beta}=dw^*(T)$ (see \cite{BSP}).\\

\noindent In this paper we show that the  $(\alpha,\beta)$-norm is equivalent to the numerical radius norm and the usual operator norm. We also study the equality conditions for the said bounds. We then obtain some upper and lower bounds for the $(\alpha,\beta)$-norm  of bounded linear operators, and apply the results to obtain bounds  for the  numerical radius of bounded linear operators. Among other results obtained in this article, we prove the following three important inequalities: 
\begin{eqnarray*}
w(T) & \leq & \inf_{\alpha,\beta} \Big\{\frac{1}{\sqrt{\alpha+\beta}}\left \|\frac{\alpha}{4}(|T|+|T^*|)^2 + \beta T^*T \right \|^{\frac{1}{2}}\Big\} \leq \frac{1}{2} \Big \| |T| + |T^*| \Big \|, \\
w^2(T) & \leq  & \inf_{\alpha,\beta} \Big\{\frac{1}{\alpha+\beta}\left \| \frac{\alpha}{2} \left ( T^*T+TT^*\right)+\beta T^*T \right\| \Big\}\leq \frac{1}{2} \| T^*T + TT^* \| ~\mbox{and}\\
 w^2(T) & \leq  & \inf_{\alpha,\beta} \frac{1}{\alpha+\beta}\Big\{\frac{\alpha}{2}w(T^2)+\left \| \frac{\alpha}{4}(T^*T+TT^*)+\beta T^*T   \right \|\Big\} \\
 &\leq & \frac{1}{2}w(T^2)+\frac{1}{4} \|T^*T+TT^* \|.
\end{eqnarray*}
We discuss conditions under which the bounds obtained here is strictly sharper than the existing bounds. We also discuss conditions under which the bounds are equal. Using a similarly motivated approach, we also improve the existing lower bound for the numerical radius of an operator, as obtained in \cite[Th. 1]{K05}.

\section{ \textbf{The $(\alpha,\beta)$-norm and numerical radius inequalities}}

\noindent We begin this section with the observation that $\|.\|_{\alpha,\beta}$ defines a norm on $\mathcal{B}({\mathcal{H}})$, equivalent with the numerical radius norm and the usual operator norm.
\begin{theorem}\label{norm}
$\|.\|_{\alpha,\beta}$ defines a norm on $\mathcal{B}({\mathcal{H}})$ and is  equivalent to the numerical radius norm $ w(.)$ and the usual operator norm $\|.\|$, satisfying the following inequalities:
\[ \sqrt{{(\alpha + \beta} )} ~w(T) \leq  \|T\|_{\alpha, \beta} \leq \sqrt{{(\alpha + 4\beta})} ~w(T), \]
\[ \max \left \{\frac{\sqrt{\alpha+\beta}}{2},\sqrt{\beta}\right \}\|T\| \leq \|T\|_{\alpha,\beta} \leq \sqrt{(\alpha+\beta)}\|T\|. \]
\end{theorem}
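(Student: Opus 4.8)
The plan is to verify the norm axioms first, and then to establish the four displayed inequalities; the two equivalence assertions follow at once from these, since the hypothesis $(\alpha,\beta)\neq(0,0)$ forces each of the constants $\sqrt{\alpha+\beta}$, $\sqrt{\alpha+4\beta}$, $\max\{\tfrac{\sqrt{\alpha+\beta}}{2},\sqrt{\beta}\}$, $\sqrt{\alpha+\beta}$ to be strictly positive. Positivity and absolute homogeneity of $\|\cdot\|_{\alpha,\beta}$ are immediate from the defining supremum, because $\lvert\langle (\lambda T)x,x\rangle\rvert^2=\lvert\lambda\rvert^2\lvert\langle Tx,x\rangle\rvert^2$ and $\|(\lambda T)x\|^2=\lvert\lambda\rvert^2\|Tx\|^2$. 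For definiteness, if $\|T\|_{\alpha,\beta}=0$ then $\alpha\lvert\langle Tx,x\rangle\rvert^2+\beta\|Tx\|^2=0$ for every unit vector $x$: if $\beta>0$ this gives $Tx=0$ for all unit $x$, hence $T=0$; if $\beta=0$ (so $\alpha>0$) it gives $\langle Tx,x\rangle=0$ for all unit $x$, whence $T=0$ by the polarization identity.

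The one genuinely non-trivial axiom is the triangle inequality, and the idea is to linearize the radicand. For a fixed unit vector $x$, consider the vector $\Phi_x(T)=\big(\sqrt{\alpha}\,\langle Tx,x\rangle,\ \sqrt{\beta}\,Tx\big)$ in the Hilbert space $\mathbb{C}\oplus\mathcal{H}$; then $\|\Phi_x(T)\|_{\mathbb{C}\oplus\mathcal{H}}=\sqrt{\alpha\lvert\langle Tx,x\rangle\rvert^2+\beta\|Tx\|^2}$, and $\Phi_x$ is linear in $T$. Applying the triangle inequality for the Hilbert norm on $\mathbb{C}\oplus\mathcal{H}$ to $\Phi_x(T+S)=\Phi_x(T)+\Phi_x(S)$, and then bounding each summand by the corresponding supremum over unit vectors, gives $\sqrt{\alpha\lvert\langle (T+S)x,x\rangle\rvert^2+\beta\|(T+S)x\|^2}\leq\|T\|_{\alpha,\beta}+\|S\|_{\alpha,\beta}$; taking the supremum over unit $x$ yields $\|T+S\|_{\alpha,\beta}\leq\|T\|_{\alpha,\beta}+\|S\|_{\alpha,\beta}$.

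For the comparison with $w(\cdot)$: the upper bound uses only $\lvert\langle Tx,x\rangle\rvert\leq w(T)$ and $\|Tx\|\leq\|T\|\leq 2w(T)$ for every unit $x$, so the radicand is at most $(\alpha+4\beta)w(T)^2$. The lower bound uses Cauchy--Schwarz in the form $\|Tx\|\geq\lvert\langle Tx,x\rangle\rvert$: choosing unit vectors $x_n$ with $\lvert\langle Tx_n,x_n\rangle\rvert\to w(T)$, the radicand at $x_n$ is at least $(\alpha+\beta)\lvert\langle Tx_n,x_n\rangle\rvert^2\to(\alpha+\beta)w(T)^2$, so $\|T\|_{\alpha,\beta}^2\geq(\alpha+\beta)w(T)^2$. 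The comparison with $\|\cdot\|$ is then essentially a corollary: the upper bound $\|T\|_{\alpha,\beta}\leq\sqrt{\alpha+\beta}\,\|T\|$ follows from $\lvert\langle Tx,x\rangle\rvert\leq\|Tx\|\leq\|T\|$; the estimate $\sqrt{\beta}\,\|T\|\leq\|T\|_{\alpha,\beta}$ follows by testing on unit vectors with $\|Tx_n\|\to\|T\|$ and discarding the $\alpha$-term; and $\tfrac{\sqrt{\alpha+\beta}}{2}\|T\|\leq\|T\|_{\alpha,\beta}$ follows by combining the already-proved bound $\sqrt{\alpha+\beta}\,w(T)\leq\|T\|_{\alpha,\beta}$ with the classical inequality $\tfrac12\|T\|\leq w(T)$ recalled in the introduction.

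I do not anticipate a serious obstacle: the only step requiring a moment's thought is the triangle inequality, and the device of passing to $\mathbb{C}\oplus\mathcal{H}$ reduces it to the triangle inequality for an honest Hilbert-space norm. Everything else is a direct consequence of Cauchy--Schwarz together with the standard relations $\tfrac12\|T\|\leq w(T)\leq\|T\|$.
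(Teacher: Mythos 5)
Your proposal is correct and follows essentially the same route as the paper: the two numerical-radius bounds are obtained exactly as in the paper (via $\lvert\langle Tx,x\rangle\rvert\leq w(T)$, $\|T\|\leq 2w(T)$ for the upper bound, and $\lvert\langle Tx,x\rangle\rvert\leq\|Tx\|$ for the lower bound), and the operator-norm bounds are the "proceeding similarly" step that the paper omits, which you fill in correctly. The only real difference is the triangle inequality: the paper expands $\|S+T\|_{\alpha,\beta}^2$, bounds the cross term by the Cauchy--Schwarz inequality applied to the pairs $\left(\sqrt{\alpha}\lvert\langle Sx,x\rangle\rvert,\sqrt{\beta}\|Sx\|\right)$ and $\left(\sqrt{\alpha}\lvert\langle Tx,x\rangle\rvert,\sqrt{\beta}\|Tx\|\right)$, and completes the square, whereas you observe that $x\mapsto\sqrt{\alpha\lvert\langle Tx,x\rangle\rvert^2+\beta\|Tx\|^2}$ is the Hilbert norm of the linear map $T\mapsto\Phi_x(T)$ in $\mathbb{C}\oplus\mathcal{H}$ and take a supremum of seminorms. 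These are the same estimate in different packaging, but yours is cleaner, and you additionally spell out the definiteness axiom (including the polarization argument needed when $\beta=0$, which uses that $\mathcal{H}$ is complex), a point the paper dismisses as trivial.
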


\begin{proof}
First we prove that  $\|.\|_{\alpha,\beta}$ defines a norm on $\mathcal{B}({\mathcal{H}}).$ Clearly, it is sufficient to show that $\|.\|_{\alpha,\beta}$ satisfies the triangle inequality, as all other defining conditions for being a norm follow trivially. Let $ S,T \in \mathcal{B}({\mathcal{H}}).$  Then we get
\begin{eqnarray*}
\|S+T\|_{\alpha,\beta}^2 &=& \sup_{\|x\|=1} \left \{ \alpha |\langle (S+T)x ,x \rangle |^2+\beta \|(S+T)x\|^2 \right \} \\
&\leq & \sup_{\|x\|=1} \left \{ \alpha (|\langle Sx ,x \rangle|+|\langle Tx ,x \rangle|)^2+\beta (\|Sx\|+\|Tx\|)^2 \right \} \\
%&=& \sup_{\|x\|=1} \big\{ \left ( \alpha |\langle Sx ,x \rangle |^2+\beta \|Sx\|^2 \right ) +  \left ( \alpha |\langle Tx ,x \rangle |^2+\beta \|Tx\|^2 \right ) \\
%&& +2 ( \alpha |\langle Sx ,x \rangle | |\langle Tx ,x \rangle |+\beta \|Sx\| \|Tx\| ) \big \} \\
&\leq& \sup_{\|x\|=1}  \left ( \alpha |\langle Sx ,x \rangle |^2+\beta \|Sx\|^2 \right ) +\sup_{\|x\|=1}  \left ( \alpha |\langle Tx ,x \rangle |^2+\beta \|Tx\|^2 \right ) \\
&& +\sup_{\|x\|=1}\{2 ( \alpha |\langle Sx ,x \rangle | |\langle Tx ,x \rangle |+\beta \|Sx\| \|Tx\| ) \} \\
&\leq& \sup_{\|x\|=1} \left \{ \alpha |\langle Sx ,x \rangle |^2+ \beta \|Sx\|^2 \right \} + \sup_{\|x\|=1} \left \{ \alpha |\langle Tx ,x \rangle |^2+\beta \|Tx\|^2 \right \} \\
&& +2 \sup_{\|x\|=1} \sqrt{ (\alpha |\langle Sx ,x \rangle |^2+\beta \|Sx\|^2 )}\sqrt{(\alpha |\langle Tx ,x \rangle |^2+\beta \|Tx\|^2 )},\\
%&&  \hspace{6cm}\mbox{by Holder's inequality}\\        
&=& \|S\|_{\alpha,\beta}^2+\|T\|_{\alpha,\beta}^2+2\|S\|_{\alpha,\beta}\|T\|_{\alpha,\beta} \\
&=& (\|S\|_{\alpha,\beta}+\|T\|_{\alpha,\beta})^2.
 \end{eqnarray*}
Therefore, $\|S+T\|_{\alpha,\beta} \leq \|S\|_{\alpha,\beta}+\|T\|_{\alpha,\beta}$ for all $S,T\in \mathcal{B}({\mathcal{H}}).$ Hence,
$\|.\|_{\alpha,\beta}$ defines a norm on $\mathcal{B}({\mathcal{H}}).$  Next we have,
\begin{eqnarray*}  
\|T\|^2_{\alpha,\beta} & = & \sup_{\|x\|=1} \{\alpha |\langle Tx ,x \rangle |^2+\beta \|Tx\|^2\}\\
& \leq & \sup_{\|x\|=1} \{\alpha |\langle Tx ,x \rangle |^2 \} + \sup_{\|x\|=1} \{ \beta \|Tx\|^2\}\\
 & =  & \alpha w^2(T) +\beta \|T\|^2\\
& \leq & ( \alpha + 4 \beta) w^2(T),
\end{eqnarray*}
so that $ \| T\|_{\alpha, \beta} \leq \sqrt{( \alpha + 4 \beta)} ~w(T).$  Again we have,
 \begin{eqnarray*}
\|T\|_{\alpha,\beta} &=& \sup_{\|x\|=1}\sqrt{\alpha |\langle Tx ,x \rangle |^2+\beta \|Tx\|^2}\\
&\geq &\sup_{\|x\|=1}  \sqrt{(\alpha+\beta) |\langle Tx ,x \rangle |^2}\\
&= &  \sqrt{(\alpha+\beta)}~ w(T).\\
\end{eqnarray*}
Thus we get, 
\[ \sqrt{{(\alpha + \beta} )} ~w(T) \leq  \|T\|_{\alpha, \beta} \leq \sqrt{{(\alpha + 4\beta})} ~w(T).\]
Proceeding similarly we can show that,
\[ \max \left \{\frac{\sqrt{\alpha+\beta}}{2},\sqrt{\beta}\right \}\|T\| \leq \|T\|_{\alpha,\beta} \leq \sqrt{(\alpha+\beta)}~\|T\|. \]
This completes the proof.
\end{proof}
\begin{remark}
	The classical numerical radius bounds follow easily from the above inequalities by considering either   $ \alpha =1,  \beta=0$ or $ \alpha =0, \beta = 1.$ 
\end{remark}

In the following theorems we study the equality conditions for the bounds of the $(\alpha,\beta)$-norm.  We begin with the following theorem, which characterises operators $T$ for which $ \| T \|_{\alpha, \beta} = \sqrt{\alpha w^2(T)+\beta \|T\|^2}. $
\begin{theorem}\label{equality4}
	Let $T\in \mathcal{B}({\mathcal{H}})$ and let $\alpha\beta \neq 0$. Then the following conditions are equivalent:\\
	$(i)$ $\|T\|_{\alpha,\beta}= \sqrt{\alpha w^2(T)+\beta \|T\|^2}.$\\
	$(ii)$ $T$ is normaloid, i.e, $w(T)=\|T\|$.\\
	$(iii)$ There exist a sequence of unit vectors $\{x_{n}\} $  in $ \mathcal{H}$ such that
	\[\lim_{n\to \infty}\|Tx_{n}\|=\|T\| ~and~ \lim_{n\to \infty}|\langle Tx_{n},x_{n}\rangle|=w(T).\]
\end{theorem}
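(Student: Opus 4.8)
My plan is to prove the two equivalences $(i)\Leftrightarrow(iii)$ and $(ii)\Leftrightarrow(iii)$; the bulk of the work is the single implication $(iii)\Rightarrow(ii)$, the other three being short. Throughout write $M=\|T\|$, $w=w(T)$, and recall from the proof of Theorem~\ref{norm} the elementary bound $\|T\|_{\alpha,\beta}^{2}\le \alpha w^{2}+\beta M^{2}$.

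\emph{The easy implications.} For $(i)\Rightarrow(iii)$ I would pick unit vectors $x_{n}$ with $\alpha|\langle Tx_{n},x_{n}\rangle|^{2}+\beta\|Tx_{n}\|^{2}\to\|T\|_{\alpha,\beta}^{2}$; under $(i)$ this limit is $\alpha w^{2}+\beta M^{2}$, so $\alpha\big(w^{2}-|\langle Tx_{n},x_{n}\rangle|^{2}\big)+\beta\big(M^{2}-\|Tx_{n}\|^{2}\big)\to 0$ with both summands non-negative, and since $\alpha,\beta>0$ (here the hypothesis $\alpha\beta\neq 0$ is used) both tend to $0$, giving $(iii)$. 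Conversely $(iii)\Rightarrow(i)$ follows by inserting such a sequence into the definition of $\|\cdot\|_{\alpha,\beta}$, which yields $\|T\|_{\alpha,\beta}^{2}\ge\alpha w^{2}+\beta M^{2}$, the reverse being the bound just recalled. For $(ii)\Rightarrow(iii)$: if $w=M$, choose $x_{n}$ with $|\langle Tx_{n},x_{n}\rangle|\to w$; then $M\ge\|Tx_{n}\|\ge|\langle Tx_{n},x_{n}\rangle|\to w=M$ squeezes $\|Tx_{n}\|\to M$.

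\emph{The main implication $(iii)\Rightarrow(ii)$.} Suppose $(iii)$ holds but, for contradiction, $w<M$. If $w=0$ then $T=0$ (as $w(\cdot)$ is a norm) and there is nothing to prove, so assume $w>0$. Passing to a subsequence of the $\{x_{n}\}$ from $(iii)$ and replacing $T$ by $e^{i\theta}T$ for a suitable $\theta$ (which alters none of $\|T\|$, $w(T)$, $\|Tx_{n}\|$, $|\langle Tx_{n},x_{n}\rangle|$), I may assume $c_{n}:=\langle Tx_{n},x_{n}\rangle\to w$. Decompose $Tx_{n}=c_{n}x_{n}+z_{n}$ with $z_{n}\perp x_{n}$; then $\|z_{n}\|^{2}=\|Tx_{n}\|^{2}-|c_{n}|^{2}\to M^{2}-w^{2}>0$, so $\hat z_{n}:=z_{n}/\|z_{n}\|$ is defined for large $n$, and after a further subsequence $\|z_{n}\|,\ p_{n}:=\langle T\hat z_{n},x_{n}\rangle,\ q_{n}:=\langle T\hat z_{n},\hat z_{n}\rangle$ converge, say to $s:=\sqrt{M^{2}-w^{2}}>0$, $p$, $q$. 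For real $\tau,\psi$ the unit vector $y_{n}(\tau,\psi)=(1+\tau^{2})^{-1/2}\big(x_{n}+e^{i\psi}\tau\,\hat z_{n}\big)$ lies in $\mathrm{span}\{x_{n},\hat z_{n}\}$, and since $\langle Tx_{n},\hat z_{n}\rangle=\|z_{n}\|$, a direct expansion gives
\[\langle Ty_{n}(\tau,\psi),y_{n}(\tau,\psi)\rangle=\frac{c_{n}+\tau\big(e^{i\psi}p_{n}+e^{-i\psi}\|z_{n}\|\big)+\tau^{2}q_{n}}{1+\tau^{2}},\]
a quantity of modulus $\le w(T)=w$ for all $n,\tau,\psi$. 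Letting $n\to\infty$, the limiting function $\varphi_{\psi}(\tau)=(1+\tau^{2})^{-1}\big(w+\tau(e^{i\psi}p+e^{-i\psi}s)+\tau^{2}q\big)$ satisfies $|\varphi_{\psi}(\tau)|\le w=|\varphi_{\psi}(0)|$, so the derivative of $|\varphi_{\psi}|^{2}$ vanishes at $\tau=0$, i.e. $\mathrm{Re}\big(e^{i\psi}p+e^{-i\psi}s\big)=0$ for every $\psi$. Taking $\psi=0$ and $\psi=\pi/2$ and using $s>0$ forces $p=-s$.

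\emph{Finishing $(iii)\Rightarrow(ii)$.} Let $T_{n}$ be the compression of $T$ to $V_{n}=\mathrm{span}\{x_{n},\hat z_{n}\}$, with matrix $\begin{pmatrix}c_{n}&p_{n}\\ \|z_{n}\|&q_{n}\end{pmatrix}$ in the orthonormal basis $(x_{n},\hat z_{n})$. Since $T_{n}x_{n}=c_{n}x_{n}+z_{n}$ we get $M\ge\|T_{n}\|\ge\|T_{n}x_{n}\|=\|Tx_{n}\|\to M$, so $\|T_{n}\|\to M$; and $|c_{n}|\le w(T_{n})\le w(T)=w$ gives $w(T_{n})\to w$. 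Letting $n\to\infty$ in these $2\times2$ matrices, $T_{\infty}=\begin{pmatrix}w&-s\\ s&q\end{pmatrix}$ satisfies $\|T_{\infty}\|=M$ and $w(T_{\infty})=w$. Because $\|T_{\infty}e_{1}\|=\sqrt{w^{2}+s^{2}}=M=\|T_{\infty}\|$, the vector $e_{1}$ is norm-attaining, so by the Cauchy--Schwarz equality condition $T_{\infty}^{*}T_{\infty}e_{1}=M^{2}e_{1}$; computing its second coordinate gives $s(\overline q-w)=0$, whence $q=w$. Thus $T_{\infty}=\begin{pmatrix}w&-s\\ s&w\end{pmatrix}$, with eigenvalues $w\pm is$ of modulus $M$, so $M=r(T_{\infty})\le w(T_{\infty})\le\|T_{\infty}\|=M$, i.e. $w(T_{\infty})=M$, contradicting $w(T_{\infty})=w<M$. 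Hence $w=M$, establishing $(ii)$. I expect the perturbation step — squeezing the exact value $p=-s$, which is precisely what makes the limiting compression $T_{\infty}$ a scalar multiple of a rotation, out of the single scalar inequality $|\langle Ty,y\rangle|\le w(T)$ — to be the crux; the accompanying idea, reducing to the $2\times2$ compression $T_{\infty}$, is what allows the argument to run without any compactness in infinite-dimensional $\mathcal H$.
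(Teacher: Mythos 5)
Your proof is correct, and on the part the paper actually proves it coincides with the paper's argument: for $(i)\Leftrightarrow(iii)$ both you and the authors take a maximizing sequence for $\|T\|_{\alpha,\beta}$, observe that the two deficits $w^2(T)-|\langle Tx_n,x_n\rangle|^2$ and $\|T\|^2-\|Tx_n\|^2$ are nonnegative and that their $\alpha,\beta$-weighted sum tends to zero, and use $\alpha\beta\neq 0$ to force each to vanish separately (your version even avoids the subsequence extraction the paper uses, since you do not need the individual sequences to converge, only the deficits to go to zero). The genuine divergence is in the remaining equivalence: the paper simply declares $(ii)\Leftrightarrow(iii)$ to be well-known and proves nothing, whereas you supply a complete, self-contained proof of the nontrivial direction $(iii)\Rightarrow(ii)$ by passing to the $2\times 2$ compressions $T_n$ onto $\mathrm{span}\{x_n,\hat z_n\}$, extracting the limit matrix $T_\infty=\begin{pmatrix}w&-s\\ s&q\end{pmatrix}$ with $s=\sqrt{\|T\|^2-w^2(T)}$, pinning down the off-diagonal entry $p=-s$ by a first-order perturbation of $|\langle Ty,y\rangle|\le w(T)$ in the direction $\hat z_n$, and the diagonal entry $q=w$ from the norm-attainment identity $T_\infty^*T_\infty e_1=\|T_\infty\|^2e_1$; the resulting matrix $wI+sJ$ is normal with spectral radius $\|T\|$, contradicting $w(T_\infty)=w<\|T\|$. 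I checked the computations (the expansion of $\langle Ty_n(\tau,\psi),y_n(\tau,\psi)\rangle$, the vanishing of $\mathrm{Re}(e^{i\psi}p+e^{-i\psi}s)$ for all $\psi$, and the continuity of $\|\cdot\|$ and $w(\cdot)$ under entrywise convergence of $2\times2$ matrices) and they are sound. What your route buys is a theorem that no longer leans on an external "well-known" fact, at the cost of considerable extra length; what the paper's route buys is brevity, since for its purposes only $(i)\Leftrightarrow(iii)$ needs an argument.
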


\begin{proof}
	The equivalence of $(ii)$ and $(iii)$ is well-known. We only prove the equivalence of $(i)$ and $(iii)$.
	
	We first prove $(i)\Rightarrow (iii).$  Since $T\in \mathcal{B}({\mathcal{H}})$, there exists a sequence of unit vectors $\{x_n\}$ in $\mathcal{H}$ such that 
	$$\|T\|_{\alpha,\beta} = \lim_{n\to \infty} \sqrt{\alpha |\langle Tx_n ,x_n \rangle |^2+\beta \|Tx_n\|^2}.$$ 
	Clearly $ \{|\langle Tx_n ,x_n \rangle |\} $ and $\{\|Tx_n\|\}$ both are bounded sequences of real numbers, so there exists a subsequence $\{x_{n_k}\}$ of the sequence $\{x_n\}$ such that both $\{|\langle Tx_{n_k} ,x_{n_k} \rangle |\} $ and $ \{\|Tx_{n_k}\|\}$ are convergent. So we get,
	\begin{eqnarray*}
		\alpha w^2(T)+\beta \|T\|^2=\|T\|_{\alpha,\beta}^2 &=& \lim_{k\to \infty} \{\alpha |\langle Tx_{n_k} ,x_{n_k} \rangle |^2+ \beta \|Tx_{n_k}\|^2 \} \\ 
		&\leq&  \lim_{k\to \infty} \alpha |\langle Tx_{n_k} ,x_{n_k} \rangle |^2+ \lim_{k\to \infty} \beta \|Tx_{n_k}\|^2 \\
		& \leq &  \lim_{k\to \infty} \alpha |\langle Tx_{n_k} ,x_{n_k} \rangle |^2 + \beta \|T\|^2 \\
		&\leq& \alpha w^2(T)+\beta \|T\|^2.
	\end{eqnarray*}
	This implies that \[\lim_{k\to \infty}\|Tx_{n_k}\|=\|T\| ~and~ \lim_{k\to \infty}|\langle Tx_{n_k},x_{n_k}\rangle|=w(T).\]
	To show that $ (iii) \Rightarrow (i),$  we observe that
	\begin{eqnarray*}
		\|T\|_{\alpha,\beta} & = &  \sup_{ \| x \| = 1} \sqrt{  \alpha |\langle Tx ,x \rangle |^2+\beta \|Tx\|^2 }\\
		&\geq& \lim_{n\to \infty} \sqrt{\alpha |\langle Tx_n ,x_n \rangle |^2+\beta \|Tx_n\|^2}\\
		&=&\sqrt{\alpha w^2(T)+\beta \|T\|^2}.
	\end{eqnarray*}
	This completes the proof.
\end{proof}

\begin{remark}
$(i)$	In case $\mathbb{H}$ is finite-dimensional, the condition $(iii)$ in Theorem \ref{equality4} implies that there exists $ x \in \mathbb{H}, \|x\|=1$ such that $\|Tx\|= \|T\| $ and $ w(T) = $ $\mid \langle Tx, x \rangle \mid.$ \\
$(ii)$  The equality conditions for $\|T\|_{\alpha,\beta} = \sqrt{(\alpha + \beta)} w(T)$  as well as  $\|T\|_{\alpha,\beta} = \sqrt{(\alpha + \beta)} \|T\|$  follows from Theorem \ref{norm} and Theorem \ref{equality4}.
\end{remark}
We next find the equality condition for $\|T\|_{\alpha,\beta}=\sqrt{(\alpha+4 \beta)}w(T).$ 
\begin{theorem}\label{equality-w}
	Let $T\in \mathcal{B}({\mathcal{H}})$ and  let $\alpha \beta \neq 0$. Then  $\|T\|_{\alpha,\beta}=\sqrt{(\alpha+4 \beta)}w(T)$ if and only if $T=0$.
\end{theorem}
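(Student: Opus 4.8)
The ``if'' direction is immediate: if $T=0$ then both $\|T\|_{\alpha,\beta}$ and $w(T)$ vanish. For the ``only if'' direction the plan is to exploit the two-step estimate already established inside the proof of Theorem \ref{norm}, namely
\[
\|T\|_{\alpha,\beta}^2 \;\le\; \alpha w^2(T)+\beta\|T\|^2 \;\le\; (\alpha+4\beta)\,w^2(T),
\]
where the first inequality comes from splitting the supremum over the two summands and the second from $\|T\|\le 2w(T)$. The assumption $\|T\|_{\alpha,\beta}=\sqrt{\alpha+4\beta}\,w(T)$ forces \emph{both} inequalities to be equalities.

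From equality in the first step, $\|T\|_{\alpha,\beta}^2=\alpha w^2(T)+\beta\|T\|^2$; since $\alpha\beta\neq0$, Theorem \ref{equality4} applies and tells us that $T$ is normaloid, i.e. $w(T)=\|T\|$. From equality in the second step, $\alpha w^2(T)+\beta\|T\|^2=(\alpha+4\beta)w^2(T)$, which simplifies to $\beta\|T\|^2=4\beta w^2(T)$; using $\beta\neq0$ this yields $\|T\|=2w(T)$. Combining the two relations gives $w(T)=\|T\|=2w(T)$, hence $w(T)=0$, and therefore $T=0$.

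I do not anticipate a genuine obstacle here; the only point requiring a little care is the bookkeeping that equality at the top of the chain genuinely propagates down to equality at each of the two internal inequalities (so that Theorem \ref{equality4} is legitimately invoked), together with checking that the standing hypothesis $\alpha\beta\neq0$ is precisely what licenses both the application of Theorem \ref{equality4} and the cancellation of $\beta$. One could also present the argument slightly more self-containedly by reproving the inequality $\|T\|_{\alpha,\beta}^2\le\alpha w^2(T)+\beta\|T\|^2$ in two lines rather than pointing back to the proof of Theorem \ref{norm}, but that is a cosmetic choice.
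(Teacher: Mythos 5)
Your argument is correct and coincides with the paper's own proof: both squeeze $\|T\|_{\alpha,\beta}^2$ between $\alpha w^2(T)+\beta\|T\|^2$ and $(\alpha+4\beta)w^2(T)$, extract $\|T\|=2w(T)$ from equality in the second step, and invoke Theorem \ref{equality4} (using $\alpha\beta\neq 0$) on the first to get $w(T)=\|T\|$, whence $T=0$. No changes needed.
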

\begin{proof}
	The sufficient part is trivial. We only prove the necessary part. Let $\|T\|_{\alpha,\beta}=\sqrt{\alpha+4 \beta}~~w(T)$. Then we have,
	$(\alpha+4 \beta)~~w^2(T)=\|T\|_{\alpha,\beta}^2 \leq \alpha w^2(T)+\beta \|T\|^2 \leq \alpha w^2(T)+\beta (2w(T))^2=(\alpha+4 \beta)~~w^2(T)$. This implies that $w(T)=\frac{\|T\|}{2}.$ On the other hand using Theorem \ref{equality4}  we get,  $w(T)=\|T\|.$ Therefore, $\|T\|=0$, i.e., $T=0$. 
\end{proof}

The characterizations for the equality of other bounds can be obtained analogously, we skip them to avoid monotonicity.
Next we observe the  properties of the $(\alpha,\beta)$-norm on $\mathcal{B}({\mathcal{H}})$ in the form of  following proposition, the proof of which follows easily, except for property $(v),$  which follows from \cite{LPS}.

\begin{prop}\label{properties}
	The $(\alpha,\beta)$-norm on $\mathcal{B}({\mathcal{H}})$ satisfies the following properties:\\
	
	\noindent $(i)$  $\|.\|_{\alpha,\beta}$ is not an algebra norm, i.e., there exists $A, B \in \mathcal{B}({\mathcal{H}})$  for which  $$\|AB\|_{\alpha,\beta}\leq \|A\|_{\alpha,\beta}~~\|B\|_{\alpha,\beta}$$ 
	does not hold.

	\noindent $(ii)$  $\|.\|_{\alpha,\beta}$ does not satisfy the power inequality, i.e., there exist operators $A,B\in \mathcal{B}({\mathcal{H}})$ and  $n\in \mathbb{N}\setminus \{1\}$ such that 
	$$\|A^n\|_{\alpha,\beta}<\|A\|^n_{\alpha,\beta} ~ \mbox{and} ~ \|B^n\|_{\alpha,\beta}>\|B\|^n_{\alpha,\beta}.$$
	
	\noindent $(iii) $ If $\alpha+\beta=1$ and $A$ is normal then $\|A^n\|_{\alpha,\beta}=\|A\|^n_{\alpha,\beta},$ for all $n\in \mathbb{N}.$\\
	
	\noindent $(iv)$ $\|.\|_{\alpha,\beta}$ is weakly unitarily invariant , i.e.,  
	$$\|U^*TU\|_{\alpha,\beta}=\|T\|_{\alpha,\beta}, ~ \forall ~ T\in \mathcal{B}({\mathcal{H}}),$$ where $U \in \mathcal{B}({\mathcal{H}})$  is an unitary operator.\\
	
	\noindent $(v)$ $\|.\|_{\alpha,\beta}$ is preserved under  the adjoint operation, i.e., $$\|T\|_{\alpha,\beta}=\|T^*\|_{\alpha,\beta}, \forall ~ T\in \mathcal{B}({\mathcal{H}}).$$
\end{prop}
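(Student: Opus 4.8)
The plan is to dispatch $(iii)$ and $(iv)$ by a direct manipulation of the defining supremum, $(i)$ and $(ii)$ by explicit small-matrix examples, and to isolate $(v)$ as the only substantial point, which I would settle by invoking \cite{LPS}.

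For $(iv)$: if $U$ is unitary then $x\mapsto Ux$ is a bijection of the unit sphere of $\mathcal{H}$ onto itself, and for every unit vector $x$ one has $\langle U^{*}TUx,x\rangle=\langle TUx,Ux\rangle$ and $\|U^{*}TUx\|=\|TUx\|$. Substituting $y=Ux$ in the supremum defining $\|U^{*}TU\|_{\alpha,\beta}$ turns it into the supremum defining $\|T\|_{\alpha,\beta}$, which is the assertion. For $(iii)$: a normal operator is normaloid, so $w(A)=\|A\|$, whence (by Theorem \ref{equality4} when $\alpha\beta\neq 0$, and directly otherwise) $\|A\|_{\alpha,\beta}=\sqrt{\alpha w^{2}(A)+\beta\|A\|^{2}}=\sqrt{\alpha+\beta}\,\|A\|$, which equals $\|A\|$ precisely because $\alpha+\beta=1$; since $A^{n}$ is again normal and $\|A^{n}\|=\|A\|^{n}$, we get $\|A^{n}\|_{\alpha,\beta}=\|A^{n}\|=\|A\|^{n}=\|A\|_{\alpha,\beta}^{n}$.

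For $(i)$ I would use the rank-one example that already witnesses the failure of submultiplicativity of the numerical radius: with $A=\bigl(\begin{smallmatrix}0&1\\0&0\end{smallmatrix}\bigr)$ and $B=A^{*}$, the product $AB=\bigl(\begin{smallmatrix}1&0\\0&0\end{smallmatrix}\bigr)$ is a rank-one orthogonal projection, so $\|AB\|_{\alpha,\beta}=\sqrt{\alpha+\beta}$, while a one-variable optimisation over unit vectors $(\cos t,\,e^{i\phi}\sin t)$ evaluates $\|A\|_{\alpha,\beta}=\|B\|_{\alpha,\beta}$ and shows $\|A\|_{\alpha,\beta}\|B\|_{\alpha,\beta}<\|AB\|_{\alpha,\beta}$ for the parameter ranges of interest (in particular for $\alpha=1,\beta=0$ and $\alpha=\beta=1$, i.e.\ for the numerical radius and the modified Davis--Wielandt radius). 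For the first strict inequality in $(ii)$, any nonzero $A$ with $A^{2}=0$ has $\|A^{2}\|_{\alpha,\beta}=0<\|A\|_{\alpha,\beta}^{2}$; for the second, using $\|B\|_{\alpha,\beta}=\sqrt{\alpha+\beta}\,\|B\|$ for normal $B$ together with $\|B^{n}\|=\|B\|^{n}$ gives $\|B^{n}\|_{\alpha,\beta}=\sqrt{\alpha+\beta}\,\|B\|^{n}$, which exceeds $\|B\|_{\alpha,\beta}^{n}=(\alpha+\beta)^{n/2}\|B\|^{n}$ whenever $(\alpha+\beta)^{(n-1)/2}<1$. The delicate bookkeeping in $(ii)$ is keeping track of which $(\alpha,\beta)$ admit both strict inequalities simultaneously.

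The real obstacle is $(v)$. Since $\langle T^{*}x,x\rangle=\overline{\langle Tx,x\rangle}$ we have $|\langle T^{*}x,x\rangle|=|\langle Tx,x\rangle|$ for every $x$, so $\|T^{*}\|_{\alpha,\beta}=\|T\|_{\alpha,\beta}$ is equivalent to
\[
\sup_{\|x\|=1}\Bigl(\alpha|\langle Tx,x\rangle|^{2}+\beta\|Tx\|^{2}\Bigr)=\sup_{\|x\|=1}\Bigl(\alpha|\langle Tx,x\rangle|^{2}+\beta\|T^{*}x\|^{2}\Bigr).
\]
This is not transparent: since $\|Tx\|\neq\|T^{*}x\|$ in general, the two sides are built from genuinely different pointwise data, and only the suprema are claimed to coincide (the polar decomposition $T=U|T|$ reduces both sides to suprema involving $|T|$, but the $\beta$-terms still fail to match up after the obvious unitary change of variable). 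This equality of suprema is precisely the content of \cite{LPS}, which I would invoke directly. As a sanity check, for $2\times 2$ matrices it can be seen by hand: by $(iv)$ one may take $T$ upper triangular, and then $T^{*}$ is, up to a permutation similarity and entrywise conjugation, upper triangular of the same form with the diagonal entries interchanged; since entrywise conjugation preserves $\|\cdot\|_{\alpha,\beta}$ (reparametrise by $x\mapsto\bar x$) and swapping the diagonal of a $2\times 2$ triangular matrix is a unitary similarity (Specht's criterion), the two norms agree — but the general case is exactly where \cite{LPS} is needed.
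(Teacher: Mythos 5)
The paper offers essentially no proof of this proposition --- it asserts that everything ``follows easily'' except $(v)$, for which it cites \cite{LPS} --- so your proposal is not so much an alternative route as the only written-out argument on the table. Your treatments of $(iii)$ and $(iv)$ are correct and complete (the substitution $y=Ux$ for $(iv)$; normality of $A^n$, $\|A^n\|=\|A\|^n$, and the normaloid equality $\|A\|_{\alpha,\beta}=\sqrt{\alpha+\beta}\,\|A\|$ for $(iii)$, where one can even bypass Theorem~\ref{equality4} by noting $\alpha|\langle Ax,x\rangle|^2+\beta\|Ax\|^2\le(\alpha+\beta)\|Ax\|^2$). For $(v)$ you take exactly the paper's route, delegating the equality of the two suprema to \cite{LPS}; your $2\times 2$ Schur-form sanity check is sound (the two triangular forms agree in trace, trace of the square, and Frobenius norm, hence are unitarily similar).

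The one genuine gap is in $(i)$ and $(ii)$, and you have correctly sensed it without resolving it. The proposition, as stated, fixes an arbitrary admissible pair $(\alpha,\beta)$, but your witnesses only work on restricted parameter ranges: the shift example for $(i)$ gives $\|A\|_{\alpha,\beta}^2=\max\{(\alpha+\beta)^2/4\alpha,\;\beta\}$ versus $\|AB\|_{\alpha,\beta}=\sqrt{\alpha+\beta}$, which fails to violate submultiplicativity for, say, $\alpha=0$, $\beta\ge 1$; and your normal witness for the second half of $(ii)$ requires $\alpha+\beta<1$. Moreover no choice of witnesses can close this gap uniformly, because the statement itself is false at the degenerate parameters: for $(\alpha,\beta)=(0,1)$ the norm is the operator norm, which is submultiplicative and satisfies $\|B^n\|\le\|B\|^n$, and for $(\alpha,\beta)=(1,0)$ it is the numerical radius, which satisfies the power inequality $w(B^n)\le w(B)^n$. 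So the ``delicate bookkeeping'' you defer is not bookkeeping at all --- $(i)$ and $(ii)$ can only be proved after the proposition is reformulated (e.g.\ restricted to suitable $(\alpha,\beta)$, or read as ``for some $(\alpha,\beta)$''). Since the paper supplies no argument, this defect lies in the statement rather than in your proof, but your write-up should say so explicitly rather than leave the parameter dependence as an open loose end.
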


We next obtain the following lower bound for the $(\alpha,\beta)$-norm.

\begin{theorem}\label{est2a}
Let $T \in \mathcal{B}({\mathcal{H}})$. Then 
\begin{eqnarray*}
 \|T\|_{\alpha,\beta}^2 & \geq & \max ~\Big \{ \alpha w^2(T)+\beta c(T^*T), ~\alpha c^2(T)+\beta \|T\|^2,\\
                                                    &           &  2\sqrt{\alpha \beta}w(T)\sqrt{c(T^*T)}, ~2\sqrt{\alpha \beta}c(T)\|T\|\Big  \}.
\end{eqnarray*}
\end{theorem}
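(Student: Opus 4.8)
The plan is to establish each of the four lower bounds separately, each obtained by choosing an appropriate sequence of unit vectors and estimating $\alpha |\langle Tx,x\rangle|^2 + \beta\|Tx\|^2$ from below along that sequence. Since $\|T\|_{\alpha,\beta}^2 = \sup_{\|x\|=1}\{\alpha|\langle Tx,x\rangle|^2 + \beta\|Tx\|^2\}$, it suffices for each bound to exhibit, for every $\varepsilon>0$, a unit vector $x$ making the expression exceed the claimed quantity minus $\varepsilon$.

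For the first bound, $\alpha w^2(T) + \beta c(T^*T)$, I would pick unit vectors $x_n$ with $|\langle Tx_n,x_n\rangle| \to w(T)$; then $\alpha|\langle Tx_n,x_n\rangle|^2 + \beta\|Tx_n\|^2 \geq \alpha|\langle Tx_n,x_n\rangle|^2 + \beta c(T^*T)$ since $\|Tx_n\|^2 = \langle T^*Tx_n,x_n\rangle \geq c(T^*T)$ for any unit vector (here one uses that $T^*T$ is positive, so $W(T^*T)\subseteq[c(T^*T),\infty)$ and $\inf W(T^*T) = c(T^*T)$). Taking the limit gives $\|T\|_{\alpha,\beta}^2 \geq \alpha w^2(T) + \beta c(T^*T)$. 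For the second bound, $\alpha c^2(T) + \beta\|T\|^2$, I would instead take $x_n\in M_T$-approximating vectors, i.e. $\|Tx_n\|\to\|T\|$, and use $|\langle Tx_n,x_n\rangle|\geq c(T)$ for every unit vector, yielding the claim in the limit. The third and fourth bounds come from the elementary inequality $\alpha a^2 + \beta b^2 \geq 2\sqrt{\alpha\beta}\,ab$ (AM–GM) applied with $a = |\langle Tx_n,x_n\rangle|$, $b = \|Tx_n\|$ along the two sequences already described: along the $w(T)$-approximating sequence, $2\sqrt{\alpha\beta}|\langle Tx_n,x_n\rangle|\|Tx_n\| \geq 2\sqrt{\alpha\beta}|\langle Tx_n,x_n\rangle|\sqrt{c(T^*T)}$, giving the third term; and along the $\|T\|$-approximating sequence, $2\sqrt{\alpha\beta}|\langle Tx_n,x_n\rangle|\|Tx_n\| \geq 2\sqrt{\alpha\beta}\,c(T)\|Tx_n\|$, giving the fourth term after passing to the limit.

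I do not anticipate a genuine obstacle here; the only point requiring slight care is the handling of suprema/infima that may not be attained (when $\mathcal{H}$ is infinite-dimensional), which forces the $\varepsilon$-argument or a subsequence extraction rather than a direct plug-in of an optimal vector, exactly as in the proof of Theorem \ref{equality4}. One should also note at the outset that the four expressions are combined under a single $\max$ simply because each is a valid lower bound for the same quantity $\|T\|_{\alpha,\beta}^2$, so no interaction between them is needed. Finally, I would remark that specializing $(\alpha,\beta)$ recovers known lower bounds for $w(T)$ and $\|T\|$, which motivates extracting numerical radius consequences in the sequel.
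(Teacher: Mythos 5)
Your argument is correct and matches the paper's proof in all essentials: both establish each of the four quantities as a separate lower bound by combining the pointwise estimates $\|Tx\|^2=\langle T^*Tx,x\rangle\geq c(T^*T)$ and $|\langle Tx,x\rangle|\geq c(T)$ with the AM--GM inequality $\alpha a^2+\beta b^2\geq 2\sqrt{\alpha\beta}\,ab$, and then pass to the supremum over unit vectors. Your maximizing-sequence phrasing is just an explicit unpacking of the paper's ``taking supremum over all unit vectors,'' so there is no substantive difference.
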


\begin{proof}
For all $x \in \mathcal{H}$ with $\|x\|=1,$ we get,
\begin{eqnarray*}
\|T\|_{\alpha,\beta}^2 &\geq& \alpha |\langle Tx,x \rangle|^2+ \beta \|Tx\|^2 \\
&=& \alpha |\langle Tx,x \rangle|^2+\beta \langle T^*Tx,x \rangle \\
&\geq& \alpha |\langle Tx,x \rangle|^2+ \beta c(T^*T).
\end{eqnarray*}
Therefore, taking supremum over all unit vectors in $\mathcal{H},$ we get,
\[\|T\|_{\alpha,\beta}^2 \geq \alpha w^2(T)+\beta c(T^*T).\]
Also, $\|T\|_{\alpha,\beta}^2 \geq \alpha |\langle Tx,x \rangle|^2+ \beta \|Tx\|^2 \geq \alpha c^2(T)+\beta \|Tx\|^2.$
Taking supremum over all unit vectors in $\mathcal{H},$ we get, $\|T\|_{\alpha,\beta}^2 \geq \alpha c^2(T)+\beta \|T\|^2$.

\noindent Again, we have
 \begin{eqnarray*}
\|T\|_{\alpha,\beta}^2 &\geq& \alpha |\langle Tx,x \rangle|^2+ \beta \|Tx\|^2 \\
&\geq& 2\sqrt{\alpha \beta}|\langle Tx,x \rangle|\|Tx\|\\
&\geq& 2\sqrt{\alpha \beta}|\langle Tx,x \rangle|\sqrt{c(T^*T)}.
\end{eqnarray*}
Therefore,
$$\|T\|_{\alpha,\beta}^2 \geq 2\sqrt{\alpha \beta}w(T)\sqrt{c(T^*T)}.$$
We also observe that $\|T\|_{\alpha,\beta}^2 \geq 2\sqrt{\alpha \beta}|\langle Tx,x \rangle|\|Tx\| \geq 2\sqrt{\alpha \beta} c(T)\|Tx\|.$
 Therefore, $\|T\|_{\alpha,\beta}^2 \geq 2\sqrt{\alpha \beta} c(T)\|T\|.$\\
Combining the above inequalities we get the required inequality.
\end{proof}

\noindent We next obtain bounds for the  $(\alpha,\beta)$-norm of the product of two bounded linear operators. We require the following known lemmas, which can be found in \cite[pp. 37-39]{GR}.
\begin{lemma}\label{lem1}
Let $A,B \in \mathcal{B}({\mathcal{H}}).$ Then \\
$(i)$ $w(AB)\leq 4w(A)w(B)$.\\
$(ii)$  If $AB=BA$ then $w(AB)\leq 2w(A)w(B).$ \\
$(iii) $  If $AB=BA$ and $A$ is an isometry, then $w(AB)\leq w(B).$ \\
$(iv)$ If $AB=BA$ and $AB^*=B^*A,$ then $w(AB)\leq w(B)\|A\|.$
\end{lemma}

\noindent We are now in a position to obtain upper bounds for the $(\alpha,\beta)$-norm of the product of two bounded linear operators.

\begin{theorem}
Let $A,B \in \mathcal{B}({\mathcal{H}})$ and let $\beta \neq 0$. Then we have the following inequality: % 
\begin{eqnarray*}
\|AB\|_{\alpha,\beta} &\leq&  \sqrt{\min \left\{\frac{4}{\beta}, \frac{\alpha+\beta}{\beta^2}, \frac{16}{\alpha+\beta} \right \}}\|A\|_{\alpha,\beta}\|B\|_{\alpha,\beta}.
\end{eqnarray*}
\end{theorem}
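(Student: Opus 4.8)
The plan is to reduce the stated estimate to three one-sided bounds obtained by swapping the operator norm for the $(\alpha,\beta)$-norm via Theorem \ref{norm}, after first isolating the crude but crucial inequality
\[
\|AB\|_{\alpha,\beta}^{2}\leq(\alpha+\beta)\,\|A\|^{2}\|B\|^{2}.
\]
To get this, fix a unit vector $x$; by the Cauchy--Schwarz inequality $|\langle ABx,x\rangle|\leq\|ABx\|$, and by submultiplicativity of the operator norm $\|ABx\|\leq\|A\|\,\|Bx\|\leq\|A\|\,\|B\|$. Hence $\alpha|\langle ABx,x\rangle|^{2}+\beta\|ABx\|^{2}\leq(\alpha+\beta)\|ABx\|^{2}\leq(\alpha+\beta)\|A\|^{2}\|B\|^{2}$, and taking the supremum over unit vectors $x$ yields the displayed inequality. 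The point of keeping both summands under the common bound $\|ABx\|^{2}$ is to make the factor $\alpha+\beta$ appear, rather than $\alpha+4\beta$.

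Next I would feed in the two lower estimates from Theorem \ref{norm}, namely $\|T\|\leq\frac{1}{\sqrt{\beta}}\|T\|_{\alpha,\beta}$ and $\|T\|\leq\frac{2}{\sqrt{\alpha+\beta}}\|T\|_{\alpha,\beta}$ (both available since $\beta\neq0$), applied to $A$ and to $B$ in the three possible combinations: using $\frac{1}{\sqrt{\beta}}$ for both factors turns $(\alpha+\beta)\|A\|^{2}\|B\|^{2}$ into $\frac{\alpha+\beta}{\beta^{2}}\|A\|_{\alpha,\beta}^{2}\|B\|_{\alpha,\beta}^{2}$; using $\frac{2}{\sqrt{\alpha+\beta}}$ for both gives the constant $\frac{16}{\alpha+\beta}$; and using one of each gives $(\alpha+\beta)\cdot\frac{1}{\beta}\cdot\frac{4}{\alpha+\beta}=\frac{4}{\beta}$. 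Taking the smallest of these three constants and then square roots produces exactly the claimed bound. (In fact $\min\{\frac{\alpha+\beta}{\beta^{2}},\frac{16}{\alpha+\beta}\}\leq\frac{4}{\beta}$ for all admissible $\alpha,\beta$, so the term $\frac{4}{\beta}$ is redundant in the minimum; I would nevertheless retain it, since it comes out for free from the mixed substitution and gives the statement a symmetric appearance.)

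There is no genuine obstacle here: once the elementary inequality $\|AB\|_{\alpha,\beta}^{2}\leq(\alpha+\beta)\|A\|^{2}\|B\|^{2}$ is in hand, everything is bookkeeping with the equivalences of Theorem \ref{norm}, and the only thing to be careful about is remembering that Theorem \ref{norm} supplies \emph{two} different lower bounds for $\|T\|$ in terms of $\|T\|_{\alpha,\beta}$, whose mixture is what makes the three-term minimum natural. As an alternative line one could route through the numerical radius, using $\|AB\|_{\alpha,\beta}^{2}\leq\alpha w^{2}(AB)+\beta\|AB\|^{2}$ together with $w(AB)\leq 4w(A)w(B)$ from Lemma \ref{lem1}$(i)$ and $w(T)\leq\frac{1}{\sqrt{\alpha+\beta}}\|T\|_{\alpha,\beta}$; this reproduces a bound implying the $\frac{16}{\alpha+\beta}$ term but is not needed for the stated result, so I would present the operator-norm argument as the main proof.
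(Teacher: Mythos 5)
Your proof is correct and follows essentially the same route as the paper: both establish $\|AB\|_{\alpha,\beta}^{2}\leq(\alpha+\beta)\|A\|^{2}\|B\|^{2}$ and then substitute the lower bounds $\sqrt{\beta}\,\|T\|\leq\|T\|_{\alpha,\beta}$ and $\frac{\sqrt{\alpha+\beta}}{2}\|T\|\leq\|T\|_{\alpha,\beta}$ from Theorem \ref{norm} into the two factors in the three possible combinations. Your side remark that $\min\bigl\{\frac{\alpha+\beta}{\beta^{2}},\frac{16}{\alpha+\beta}\bigr\}\leq\frac{4}{\beta}$ (their geometric mean) is also correct, so the first term of the minimum is indeed never the active one.
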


\begin{proof}
From the definition of $(\alpha,\beta)$-norm, we have
\begin{eqnarray*}
\|AB\|_{\alpha,\beta}^2 &=& \sup_{\|x\|=1}\left \{ \alpha |\langle ABx ,x \rangle |^2+\beta \|ABx\|^2 \right \}\\ 
&\leq& \alpha w^2(AB)+\beta \|AB\|^2\\
&\leq& (\alpha+\beta)\|A\|^2\|B\|^2\\
& \leq & 4(\alpha+\beta)w^2(A)\|B\|^2, ~\mbox{ using } ~ \|A\| \leq 2 w(A)\\
& \leq & 4(\alpha+\beta)w^2(A) ( 1/\beta) \|B\|_{\alpha, \beta}^2, \\
& \leq & \frac{4}{\beta}\|A\|_{\alpha,\beta}^2\|B\|_{\alpha,\beta}^2, ~ \mbox{using Theorem  \ref{norm}}.
\end{eqnarray*}
Thus we get, $$ \|AB\|_{\alpha,\beta}\leq \sqrt{ \frac{4}{\beta}} \|A\|_{\alpha,\beta}\|B\|_{\alpha,\beta}.$$
Proceeding as above and using $ \sqrt{\beta}\|A\| \leq \|A\|_{\alpha, \beta}, ~\sqrt{\beta}\|B\| \leq \|B\|_{\alpha, \beta} ,$ we get,
$$  \|AB\|_{\alpha,\beta} \leq \sqrt{ \frac{\alpha + \beta}{\beta^2}} \|A\|_{\alpha,\beta}\|B\|_{\alpha,\beta}.$$ 
Finally using  $  \|A\| \leq 2 w(A), ~ \|B\| \leq 2 w(B) $ and Theorem  ~\ref{norm} we obtain, 

 $$ \|AB\|_{\alpha,\beta} \leq \sqrt{\frac{16}{\alpha+\beta} } \|A\|_{\alpha,\beta}\|B\|_{\alpha,\beta}. $$
 Combining all the above inequalities we get the required inequality.

\end{proof}

In the following theorem we obtain upper bounds for the $(\alpha,\beta)$-norm of the product of two bounded linear operators, under the additional assumption that they commute. 
\begin{theorem}
Let $A,B \in \mathcal{B}({\mathcal{H}})$ be such that $AB=BA$.  \\
$ (i)$ If $ \beta \neq 0$, then 
$$ \|AB\|_{\alpha,\beta} \leq \sqrt{\left(\frac{4\alpha}{(\alpha+\beta)^2}+\frac{1}{\beta} \right )}\|A\|_{\alpha,\beta}\|B\|_{\alpha,\beta}. $$
$(ii)$  If $A$ is an isometry, then
 $$	\|AB\|_{\alpha,\beta} \leq\sqrt{\left ( \frac{\alpha}{\alpha+\beta}+1  \right )}\|B\|_{\alpha,\beta}. $$
$(iii)$  If  $AB^*=B^*A$ and  $\beta \neq 0$, then  
\[ \|AB\|_{\alpha,\beta}  \leq \sqrt{\left (\frac{\alpha}{\alpha+\beta}+1\right )} \min \left \{\frac{2}{\sqrt{\alpha+\beta}},\frac{1}{\sqrt{\beta}}\right \}\|A\|_{\alpha,\beta}\|B\|_{\alpha,\beta}.\]
\end{theorem}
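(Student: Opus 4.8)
The plan is to handle all three parts with the single template that runs through this section: for any $X\in\mathcal{B}(\mathcal{H})$ one has $\|X\|_{\alpha,\beta}^2=\sup_{\|x\|=1}\bigl\{\alpha|\langle Xx,x\rangle|^2+\beta\|Xx\|^2\bigr\}\le \alpha w^2(X)+\beta\|X\|^2$. Applying this with $X=AB$ reduces each case to three ingredients: an estimate for $w(AB)$ taken from the appropriate clause of Lemma~\ref{lem1}; the submultiplicativity $\|AB\|\le\|A\|\,\|B\|$ of the operator norm (or, in part $(ii)$, the fact that an isometry has norm one, so $\|ABx\|=\|Bx\|$); and the conversions $w(T)\le\frac{1}{\sqrt{\alpha+\beta}}\|T\|_{\alpha,\beta}$ and $\|T\|\le\frac{1}{\sqrt{\beta}}\|T\|_{\alpha,\beta}$ coming from Theorem~\ref{norm}.

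For $(i)$ I would use commutativity and Lemma~\ref{lem1}$(ii)$ to get $w(AB)\le 2w(A)w(B)$, whence $\|AB\|_{\alpha,\beta}^2\le 4\alpha\,w^2(A)w^2(B)+\beta\|A\|^2\|B\|^2$; bounding $w(A),w(B)$ by $\frac{1}{\sqrt{\alpha+\beta}}\|\cdot\|_{\alpha,\beta}$ in the first summand and $\|A\|,\|B\|$ by $\frac{1}{\sqrt{\beta}}\|\cdot\|_{\alpha,\beta}$ in the second produces exactly $\left(\frac{4\alpha}{(\alpha+\beta)^2}+\frac{1}{\beta}\right)\|A\|_{\alpha,\beta}^2\|B\|_{\alpha,\beta}^2$. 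For $(ii)$, the isometry $A$ gives $\|AB\|\le\|B\|$ and Lemma~\ref{lem1}$(iii)$ gives $w(AB)\le w(B)$, so $\|AB\|_{\alpha,\beta}^2\le\alpha w^2(B)+\beta\|B\|^2\le\frac{\alpha}{\alpha+\beta}\|B\|_{\alpha,\beta}^2+\|B\|_{\alpha,\beta}^2$, using $w(B)\le\frac{1}{\sqrt{\alpha+\beta}}\|B\|_{\alpha,\beta}$ and $\sqrt{\beta}\|B\|\le\|B\|_{\alpha,\beta}$. For $(iii)$, Lemma~\ref{lem1}$(iv)$ gives $w(AB)\le w(B)\|A\|$, hence $\|AB\|_{\alpha,\beta}^2\le\|A\|^2\bigl(\alpha w^2(B)+\beta\|B\|^2\bigr)\le\|A\|^2\left(\frac{\alpha}{\alpha+\beta}+1\right)\|B\|_{\alpha,\beta}^2$ by the computation just made; finally reading the lower bound $\max\left\{\frac{\sqrt{\alpha+\beta}}{2},\sqrt{\beta}\right\}\|A\|\le\|A\|_{\alpha,\beta}$ of Theorem~\ref{norm} backwards as $\|A\|\le\min\left\{\frac{2}{\sqrt{\alpha+\beta}},\frac{1}{\sqrt{\beta}}\right\}\|A\|_{\alpha,\beta}$ yields the stated inequality.

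I do not expect a genuine obstacle here; the work is chaining Lemma~\ref{lem1} with Theorem~\ref{norm} and tracking constants. The one point that needs attention — and the only place the argument could slip — is the choice of \emph{which} lower bound for $\|\cdot\|_{\alpha,\beta}$ to invoke on each term so that the constants match the statement: the operator-norm lower bound (not the numerical-radius one) must be used on the $\beta\|AB\|^2$ contribution in all three parts, and in part $(iii)$ one should retain the $\min$ of $2/\sqrt{\alpha+\beta}$ and $1/\sqrt{\beta}$ rather than prematurely committing to either value. It is also worth checking at the outset that the hypotheses match the clauses of Lemma~\ref{lem1} actually used ($AB=BA$ in $(i)$; $A$ an isometry with $AB=BA$ in $(ii)$; $AB=BA$ together with $AB^*=B^*A$ in $(iii)$) and that $\beta\neq 0$ is precisely what legitimizes the division by $\sqrt{\beta}$ in parts $(i)$ and $(iii)$.
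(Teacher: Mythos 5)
Your proposal is correct and follows essentially the same route as the paper: bound $\|AB\|_{\alpha,\beta}^2$ by $\alpha w^2(AB)+\beta\|AB\|^2$, invoke the relevant clause of Lemma~\ref{lem1} together with submultiplicativity (or $\|ABx\|=\|Bx\|$ for the isometry case), and convert back via the two lower bounds of Theorem~\ref{norm}, keeping the operator-norm bound for the $\beta$-term and the $\min$ in part $(iii)$. The paper leaves part $(iii)$ as ``proceeding as above''; your explicit factorization $\|AB\|_{\alpha,\beta}^2\le\|A\|^2\bigl(\alpha w^2(B)+\beta\|B\|^2\bigr)$ is exactly the intended computation.
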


\begin{proof} $(i).$  
Using the definition of the $(\alpha,\beta)$ norm and Lemma \ref{lem1} (ii), we get,
\begin{eqnarray*}
\|AB\|_{\alpha,\beta}^2 &=& \sup_{\|x\|=1}\left \{ \alpha |\langle ABx ,x \rangle |^2+\beta \|ABx\|^2 \right \}\\ 
&\leq& \alpha w^2(AB)+\beta\|AB\|^2\\
&\leq& 4\alpha w^2(A)w^2(B)+\beta\|A\|^2\|B\|^2\\
&\leq& \left( \frac{4\alpha}{(\alpha+\beta)^2}+ \frac{1}{\beta} \right )\|A\|_{\alpha,\beta}^2\|B\|_{\alpha,\beta}^2, ~~\mbox{using Theorem  ~\ref{norm}.}
\end{eqnarray*}
Thus we get the inequality in $(i).$ \\
$(ii)$.  As above, using the definition of $(\alpha,\beta)$ norm and Lemma \ref{lem1} (iii), we get,
\begin{eqnarray*}
	\|AB\|_{\alpha,\beta}^2 &=& \sup_{\|x\|=1}\left \{ \alpha |\langle ABx ,x \rangle |^2+\beta \|ABx\|^2 \right \}\\ 
	&\leq& \alpha w^2(AB)+\beta\|AB\|^2\\
	&\leq& \alpha w^2(B)+\beta\|B\|^2, \\
	&\leq& \left ( \frac{\alpha}{\alpha+\beta}+1  \right ) \|B\|_{\alpha,\beta}^2, ~~\mbox{using Theorem  ~\ref{norm}.}
\end{eqnarray*}
Thus we get the inequality in $(ii).$ \\
$(iii)$. Proceeding as in the above two cases and using Lemma \ref{lem1} (iv), we get the required inequality.

\end{proof}
To proceed further in the estimation of upper bound for the $(\alpha,\beta)$-norm of  product of  two bounded linear operators, we need the following two lemmas.
\begin{lemma}$($\cite{SMY}$)$\label{lem4}
$(i)$ The Power-Mean inequality:
$$ a^tb^{(1-t)} ~\leq~ ta +(1-t)b ~\leq ~\left (ta^p +(1-t)b^p \right)^\frac{1}{p}, $$
for all $t\in [0,1]$, $a,b \geq 0$ and $p \geq 1.$\\

$(ii)$ The Power-Young inequality:
$$ ab ~\leq~  \frac{a^n}{n}+\frac{b^m}{m} ~\leq~  \left (  \frac{a^{pn}}{n}+\frac{b^{pm}}{m} \right)^\frac{1}{p},$$
for all $a,b \geq 0$ and $n,m >1$ with $\frac{1}{n}+\frac{1}{m}=1$ and $p\geq 1.$
\end{lemma}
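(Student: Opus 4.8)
The plan is to derive both chains of inequalities from two elementary convexity facts: the concavity of the logarithm on $(0,\infty)$ (equivalently, the weighted arithmetic--geometric mean inequality), and the convexity of the map $\varphi(s)=s^{p}$ on $[0,\infty)$ for $p\ge 1$.

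For part $(i)$, I would first prove the left-hand inequality $a^{t}b^{1-t}\le ta+(1-t)b$. Assuming $a,b>0$ (the cases $a=0$, $b=0$, or $t\in\{0,1\}$ being immediate once one adopts the convention $0^{0}=1$), apply concavity of $\log$ to the points $a,b$ with weights $t,1-t$ to get $t\log a+(1-t)\log b\le \log\!\big(ta+(1-t)b\big)$, and exponentiate. For the right-hand inequality, apply Jensen's inequality to the convex function $\varphi(s)=s^{p}$ with the same weights $t,1-t$, obtaining $\big(ta+(1-t)b\big)^{p}\le ta^{p}+(1-t)b^{p}$; since both sides are nonnegative, taking $p$-th roots yields $ta+(1-t)b\le\big(ta^{p}+(1-t)b^{p}\big)^{1/p}$.

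For part $(ii)$, the left-hand inequality $ab\le \frac{a^{n}}{n}+\frac{b^{m}}{m}$ follows from the left-hand inequality of $(i)$ by taking $t=\frac{1}{n}$ and $1-t=\frac{1}{m}$ (a valid choice of weights precisely because $\frac{1}{n}+\frac{1}{m}=1$) and then replacing $a$ by $a^{n}$ and $b$ by $b^{m}$, using $(a^{n})^{1/n}(b^{m})^{1/m}=ab$. For the right-hand inequality, again invoke convexity of $\varphi(s)=s^{p}$, now with weights $\frac{1}{n},\frac{1}{m}$ applied to the points $a^{n}$ and $b^{m}$: $\left(\frac{a^{n}}{n}+\frac{b^{m}}{m}\right)^{p}\le \frac{(a^{n})^{p}}{n}+\frac{(b^{m})^{p}}{m}=\frac{a^{pn}}{n}+\frac{b^{pm}}{m}$, and take $p$-th roots.

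There is no genuine obstacle in this argument; the only points meriting a word of care are the boundary cases $a=0$, $b=0$, or $t\in\{0,1\}$ (handled by the $0^{0}=1$ convention and by direct substitution), and the remark that all quantities involved are nonnegative so that extracting $p$-th roots preserves the inequalities. Since the statement is quoted from \cite{SMY}, one may alternatively just cite it; the above provides a short self-contained derivation.
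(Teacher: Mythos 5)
Your derivation is correct: both chains follow exactly as you say from the weighted AM--GM inequality (concavity of $\log$) and Jensen's inequality for the convex map $s\mapsto s^{p}$, $p\ge 1$, with the boundary cases handled as you note. The paper itself gives no proof of this lemma --- it is simply quoted from the cited reference --- so there is nothing to compare against; your self-contained argument is the standard one and fills that gap adequately.
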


\begin{lemma}$($\cite{K88}$)$\label{lem5}
 Let $A \in \mathcal{B}({\mathcal{H}})$ be a positive operator i.e., $A \geq 0.$ Then for any unit vector $x \in \mathcal{H}$, we have the following inequality:
\[\langle Ax,x \rangle ^p \leq \langle A^px,x \rangle ,\]  
for all $p\geq 1$.  
\end{lemma}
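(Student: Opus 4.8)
The plan is to reduce the operator inequality to a scalar statement through the spectral theorem, and then to invoke Jensen's inequality for the convex function $t \mapsto t^{p}$. The point to keep in mind is that $A \geq 0$ and $p \geq 1$ are exactly the hypotheses that make this work: positivity puts the spectrum inside $[0,\|A\|]$ (so $\langle Ax,x\rangle \geq 0$ and raising to the non-integer power $p$ makes sense), and $p \geq 1$ gives convexity of $t\mapsto t^{p}$ on $[0,\infty)$.

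First I would recall that, $A$ being a bounded positive operator, its spectrum $\sigma(A)$ lies in the compact interval $[0,\|A\|]$, and the spectral theorem furnishes a projection-valued measure $E$ on the Borel subsets of $[0,\|A\|]$ with $A=\int_{0}^{\|A\|}\lambda\,dE(\lambda)$ and, more generally, $f(A)=\int_{0}^{\|A\|}f(\lambda)\,dE(\lambda)$ for every bounded Borel function $f$; in particular $A^{p}=\int_{0}^{\|A\|}\lambda^{p}\,dE(\lambda)$. Next, fixing a unit vector $x\in\mathcal{H}$, I would set $\mu_{x}(S)=\langle E(S)x,x\rangle$ for Borel $S\subseteq[0,\|A\|]$; since $E$ is a projection-valued measure with $E([0,\|A\|])=I$ and $\|x\|=1$, this $\mu_{x}$ is a genuine probability measure on $[0,\|A\|]$, and one has $\langle f(A)x,x\rangle=\int_{0}^{\|A\|}f(\lambda)\,d\mu_{x}(\lambda)$ for bounded Borel $f$. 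Taking $f(\lambda)=\lambda$ and $f(\lambda)=\lambda^{p}$ gives $\langle Ax,x\rangle=\int_{0}^{\|A\|}\lambda\,d\mu_{x}(\lambda)\geq 0$ and $\langle A^{p}x,x\rangle=\int_{0}^{\|A\|}\lambda^{p}\,d\mu_{x}(\lambda)$, both integrals being finite because $A$ is bounded.

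Finally, since $p\geq 1$ the function $\phi(t)=t^{p}$ is convex on $[0,\infty)$, so Jensen's inequality for the probability measure $\mu_{x}$ yields
\[
\langle Ax,x\rangle^{p}=\phi\!\left(\int_{0}^{\|A\|}\lambda\,d\mu_{x}(\lambda)\right)\leq\int_{0}^{\|A\|}\phi(\lambda)\,d\mu_{x}(\lambda)=\langle A^{p}x,x\rangle,
\]
which is the asserted inequality. I do not expect a real obstacle here: the only points needing a word of justification are that $\mu_{x}$ is a probability measure (immediate from $E([0,\|A\|])=I$ and $\|x\|=1$) and that $\langle Ax,x\rangle$ is nonnegative (immediate from $A\geq 0$). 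As an aside, one can sidestep the measure-theoretic language altogether by first proving the case $p=2$ via Cauchy–Schwarz, namely $\langle Bx,x\rangle^{2}\leq\|Bx\|^{2}\|x\|^{2}=\langle B^{2}x,x\rangle$ for every $B\geq 0$, then iterating with $B=A,A^{2},A^{4},\dots$ to obtain the statement for $p=2^{n}$, and interpolating; but the Jensen argument is cleaner and delivers all $p\geq 1$ at once.
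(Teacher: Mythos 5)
Your argument is correct. Note, however, that the paper offers no proof of this statement at all: it is quoted as a known result (the H\"older--McCarthy inequality) with a citation to Kittaneh's paper \cite{K88}, so there is nothing internal to compare against. Your spectral-theorem route is the standard proof: passing to the scalar probability measure $\mu_{x}(S)=\langle E(S)x,x\rangle$ is legitimate because $E$ is a projection-valued measure with $E(\sigma(A))=I$ and $\|x\|=1$, the identity $\langle f(A)x,x\rangle=\int f\,d\mu_{x}$ holds for bounded Borel $f$, and Jensen's inequality for the convex function $t\mapsto t^{p}$ on $[0,\infty)$ (where positivity of $A$ guarantees the integrand and the mean lie in the domain) closes the argument. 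The only caveat is in your aside: the Cauchy--Schwarz iteration cleanly gives $p=2^{n}$, but the ``interpolating'' step to reach general $p\geq 1$ is not automatic and would itself need an argument, so you are right to prefer the Jensen version as the actual proof.
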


\begin{theorem}\label{est2}
Let $A,B \in \mathcal{B}(\mathcal{H})$ be such that $AB=BA$ and $A^*B=BA^*.$ Also let $ \alpha + \beta = 1.$  Then
\begin{eqnarray*}
\|AB\|_{\alpha,\beta} &\leq& \left (2 w\left(\begin{array}{cc}
	0 & \frac{1}{n}\left ( \alpha (AA^*)^{pn}+\beta (A^*A)^{pn} \right)\\
	\frac{1}{m}(B^*B)^{pm}  & 0
	\end{array}\right)\right )^{\frac{1}{2p}}, \\
\end{eqnarray*}
where $n,m >1$ with $\frac{1}{n}+\frac{1}{m}=1,$ $p\geq 1,$ $pn \geq2,$ $pm \geq2.$ 
\end{theorem}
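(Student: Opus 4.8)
The plan is to estimate the supremum defining $\|AB\|_{\alpha,\beta}$ pointwise on the unit sphere. Write $X=\frac1n\bigl(\alpha(AA^{*})^{pn}+\beta(A^{*}A)^{pn}\bigr)$ and $Y=\frac1m(B^{*}B)^{pm}$ for the two off-diagonal entries of the block operator $S$ appearing in the statement, so that $S=\bigl(\begin{smallmatrix}0&X\\ Y&0\end{smallmatrix}\bigr)$, and let the goal be the pointwise bound $\alpha|\langle ABx,x\rangle|^{2}+\beta\|ABx\|^{2}\le\langle (X+Y)x,x\rangle^{1/p}$ for every unit vector $x$. The first thing to record is what the two commutation hypotheses buy: from $AB=BA$, $A^{*}B=BA^{*}$ (and the adjoint relations $AB^{*}=B^{*}A$, $A^{*}B^{*}=B^{*}A^{*}$) one gets that $A^{*}A$ and $AA^{*}$ each commute with $B$, $B^{*}$, hence with $B^{*}B$; consequently $(AB)^{*}(AB)=B^{*}A^{*}AB=A^{*}A\,B^{*}B$, so that $\|ABx\|^{2}=\langle A^{*}A\,B^{*}B\,x,x\rangle$ --- a value of a positive operator, since $A^{*}A\,B^{*}B$ is a product of commuting positives.

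Next come two estimates obtained by raising to the $p$-th power. For the numerical-range term, Cauchy--Schwarz gives $|\langle ABx,x\rangle|^{2}=|\langle Bx,A^{*}x\rangle|^{2}\le\langle AA^{*}x,x\rangle\langle B^{*}Bx,x\rangle$; raising this to the $p$-th power, applying the scalar Young inequality of Lemma~\ref{lem4}(ii) (with the conjugate exponents $n,m$), and then Lemma~\ref{lem5} to the positive operators $AA^{*}$ and $B^{*}B$ (with exponents $pn$ and $pm$), one obtains
\[
|\langle ABx,x\rangle|^{2p}\le\tfrac1n\langle (AA^{*})^{pn}x,x\rangle+\tfrac1m\langle (B^{*}B)^{pm}x,x\rangle=\langle R_{1}x,x\rangle ,
\]
where $R_{1}=\frac1n(AA^{*})^{pn}+\frac1m(B^{*}B)^{pm}$. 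For the norm term, Lemma~\ref{lem5} (exponent $p$) gives $\|ABx\|^{2p}=\langle A^{*}A\,B^{*}B\,x,x\rangle^{p}\le\langle (A^{*}A\,B^{*}B)^{p}x,x\rangle=\langle (A^{*}A)^{p}(B^{*}B)^{p}x,x\rangle$; and since $(A^{*}A)^{p}$ and $(B^{*}B)^{p}$ are commuting positive operators, the scalar Young inequality transferred through the commutative continuous functional calculus they generate yields $(A^{*}A)^{p}(B^{*}B)^{p}\le\frac1n(A^{*}A)^{pn}+\frac1m(B^{*}B)^{pm}$, hence
\[
\|ABx\|^{2p}\le\langle R_{2}x,x\rangle ,\qquad R_{2}=\tfrac1n(A^{*}A)^{pn}+\tfrac1m(B^{*}B)^{pm}.
\]

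Now I would take $1/p$-th powers and combine with weights $\alpha,\beta$. Using $\alpha+\beta=1$ and the concavity inequality $\alpha u^{1/p}+\beta v^{1/p}\le(\alpha u+\beta v)^{1/p}$ --- the right-hand inequality in Lemma~\ref{lem4}(i) with weights $\alpha,\beta$ --- together with $\alpha R_{1}+\beta R_{2}=X+Y$ (the two $(B^{*}B)^{pm}$ terms coalesce since $\frac\alpha m+\frac\beta m=\frac1m$),
\[
\alpha|\langle ABx,x\rangle|^{2}+\beta\|ABx\|^{2}\le\langle (X+Y)x,x\rangle^{1/p}\le\|X+Y\|^{1/p}.
\]
Finally, $X$ and $Y$ are self-adjoint, so $Re(S)$ is the self-adjoint block operator with both off-diagonal entries equal to $\frac12(X+Y)$, of operator norm $\frac12\|X+Y\|$; taking $\theta=0$ in $w(S)=\sup_{\theta\in\mathbb{R}}\|Re(e^{i\theta}S)\|$ gives $\|X+Y\|\le 2w(S)$. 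Thus $\alpha|\langle ABx,x\rangle|^{2}+\beta\|ABx\|^{2}\le(2w(S))^{1/p}$ for every unit $x$; taking the supremum over $x$ gives $\|AB\|_{\alpha,\beta}^{2}\le(2w(S))^{1/p}$, and extracting the square root gives the asserted inequality. The hypotheses $p\ge1$ and $pn,pm\ge2$ are what make all the applications of Lemmas~\ref{lem4} and~\ref{lem5} above legitimate.

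The step I expect to be the main obstacle is the norm-term estimate: one cannot bound $\langle A^{*}A\,B^{*}B\,x,x\rangle$ by the product $\langle A^{*}Ax,x\rangle\langle B^{*}Bx,x\rangle$ --- this already fails for commuting diagonal $2\times2$ matrices --- so the argument must route through the \emph{operator} Young inequality for the commuting positive operators $(A^{*}A)^{p}$ and $(B^{*}B)^{p}$, and this is exactly where both commutation hypotheses are genuinely used (they are precisely what forces $(AB)^{*}(AB)=A^{*}A\,B^{*}B$ with $A^{*}A$ and $B^{*}B$ commuting). The rest is careful bookkeeping of exponents, so that each use of Lemma~\ref{lem5} ($\langle Px,x\rangle^{q}\le\langle P^{q}x,x\rangle$ for $P\ge0$, $q\ge1$) and of the scalar power--mean and power--Young inequalities is made with an admissible exponent.
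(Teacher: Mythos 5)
Your proof is correct, and its overall skeleton is the same as the paper's: a pointwise estimate on unit vectors, the identical treatment of the term $|\langle ABx,x\rangle|^{2p}$ (Cauchy--Schwarz, scalar Young with exponents $n,m$, then Lemma~\ref{lem5}), the same recombination $\alpha R_1+\beta R_2=X+Y$ using $\alpha+\beta=1$, and the passage to $2w(S)$ via $\|X+Y\|\le 2\|Re(S)\|\le 2w(S)$. (Your use of concavity of $t^{1/p}$ at the end is just the mirror image of the paper's opening use of convexity of $t^p$; the two are equivalent.) The one genuinely different step is the norm term. The paper writes $\|ABx\|^{2p}=\langle A^*Ax,B^*Bx\rangle^{p}$, applies Cauchy--Schwarz to get $\|A^*Ax\|^{p}\|B^*Bx\|^{p}=\langle (A^*A)^2x,x\rangle^{p/2}\langle (B^*B)^2x,x\rangle^{p/2}$, then the scalar Young inequality and Lemma~\ref{lem5} with exponents $pn/2$ and $pm/2$ --- this is precisely where the hypotheses $pn\ge 2$, $pm\ge 2$ are consumed. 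You instead apply Lemma~\ref{lem5} directly to the positive operator $A^*AB^*B$ with exponent $p$ and then invoke an operator-level Young inequality $(A^*A)^p(B^*B)^p\le\frac1n(A^*A)^{pn}+\frac1m(B^*B)^{pm}$ for commuting positives via the Gelfand transform of the commutative $C^*$-algebra they generate. Both routes land on the same operator $R_2$; yours trades the elementary scalar manipulations for a (standard but heavier) functional-calculus fact, and as a small bonus it does not actually need $pn,pm\ge 2$ for that term, only $p\ge 1$. Your diagnosis that $\langle PQx,x\rangle\le\langle Px,x\rangle\langle Qx,x\rangle$ is false in general, so that some detour is unavoidable, is exactly right --- the paper's detour is Cauchy--Schwarz on the vectors $A^*Ax$ and $B^*Bx$, yours is the operator Young inequality.
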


\begin{proof}
Let $x$ be a unit vector in $\mathcal{H}.$ Then by the convexity of the function $t^p$, we get
\begin{eqnarray*}
 \left (\alpha|\langle ABx, x\rangle |^2  + \beta \| ABx \|^2\right)^p &\leq&  \alpha |\langle ABx, x\rangle |^{2p}  + \beta \| ABx \|^{2p}.
\end{eqnarray*}
Using Lemma \ref{lem4} (ii) and Lemma \ref{lem5}, we get
\begin{eqnarray*}
\alpha |\langle ABx, x\rangle |^{2p}  + \beta \| ABx \|^{2p} &=& \alpha |\langle Bx,A^*x   \rangle|^{2p}  +\beta \langle A^*Ax,B^*Bx   \rangle ^p \\ %~~~~As,~AB=BA~ and~ A^*B=BA^*\\
&\leq&  \alpha\|Bx\|^{2p}\|A^*x\|^{2p} + \beta \|A^*Ax\|^{p}\|B^*Bx\|^{p}\\
&=&  \alpha \langle AA^*x,x   \rangle  ^p\langle B^*Bx,x \rangle  ^p+\beta \langle  (A^*A)^2x,x  \rangle  ^{\frac{p}{2}}\langle (B^*B)^2x,x \rangle  ^{{\frac{p}{2}}}\\
&\leq&  \alpha\left (\frac{1}{n} \langle AA^*x,x  \rangle^{pn} +\frac{1}{m} \langle B^*Bx,x \rangle ^{pm} \right )\\
&& +  \beta  \left ( \frac{1}{n} \langle (A^*A)^2x,x \rangle^{\frac{pn}{2}} +\frac{1}{m} \langle (B^*B)^2x,x \rangle ^{\frac{pm}{2}} \right ) \\
&\leq& \alpha\left (\frac{1}{n} \langle (AA^*)^{pn}x,x  \rangle +\frac{1}{m} \langle (B^*B)^{pm}x,x \rangle  \right )\\
&& +  \beta  \left ( \frac{1}{n} \langle (A^*A)^{pn}x,x \rangle +\frac{1}{m} \langle (B^*B)^{pm}x,x \rangle  \right ) \\
&=& \left \langle \left (\frac{1}{n}\left ( \alpha (AA^*)^{pn}+\beta (A^*A)^{pn} \right)+ \frac{1}{m}(B^*B)^{pm}     \right)x,x \right \rangle \\
&\leq& \left \| \frac{1}{n}\left ( \alpha (AA^*)^{pn}+\beta (A^*A)^{pn} \right)+ \frac{1}{m}(B^*B)^{pm}    \right \| \\
&=& 2 w\left(\begin{array}{cc}
	0 & \frac{1}{n}\left ( \alpha (AA^*)^{pn}+\beta (A^*A)^{pn} \right)\\
	\frac{1}{m}(B^*B)^{pm}  & 0
	\end{array}\right). 
\end{eqnarray*}
Therefore, 
\begin{eqnarray*}
 \left (\alpha|\langle ABx, x\rangle |^2  + \beta \| ABx \|^2\right)^p &\leq& 2 w\left(\begin{array}{cc}
	0 & \frac{1}{n}\left ( \alpha (AA^*)^{pn}+\beta (A^*A)^{pn} \right)\\
	\frac{1}{m}(B^*B)^{pm}  & 0
	\end{array}\right).
\end{eqnarray*}
Taking supremum over all unit vectors in $\mathcal{H}$, we get the required inequality.
\end{proof}

Next we obtain an inequality for the $(\alpha,\beta)$-norm of product of two bounded linear operators in terms of non-negative continuous functions on $[0,\infty)$. We  need the following lemma.

\begin{lemma}\cite[Th. 5]{K88}\label{lem7}
Let $A,B \in \mathcal{B}({\mathcal{H}})$ be such that $|A|B=B^*|A|.$  If $f$ and $g$ are two non-negative continuous functions on $[0,\infty)$ satisfying $f(t)g(t)=t, ~\forall t\geq 0$ then 
\[ |\langle ABx,y \rangle | \leq r(B)\|f(|A|)x\|\|g(|A^*|)y\|, \]
for any vectors $x,y\in \mathcal{H}.$
\end{lemma}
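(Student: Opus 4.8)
Since this is \cite[Th.~5]{K88}, I will only outline the argument I would follow. The plan is to use the polar decomposition $A = U|A|$, where $U$ is the partial isometry with initial space $\overline{\mathrm{ran}\,|A|}$ and final space $\overline{\mathrm{ran}\,A}$, together with the factorisation $|A| = f(|A|)g(|A|)$, which is available because $f(t)g(t) = t$ on $[0,\infty)$.

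First I would record the intertwining identities $U|A| = |A^*|U$ (a standard consequence of the polar decomposition, since $|A^*| = U|A|U^*$) and, by approximating a continuous function first by polynomials vanishing at $0$ and then in general, $U\,h(|A|) = h(|A^*|)\,U$ for every continuous $h$ on $[0,\|A\|]$. Combining this with $|A| = g(|A|)f(|A|)$, for vectors $x,y$ one obtains
\[
\langle ABx, y\rangle \;=\; \langle U|A|Bx, y\rangle \;=\; \langle U g(|A|) f(|A|) Bx, y\rangle \;=\; \langle g(|A^*|)\,U f(|A|) Bx, y\rangle \;=\; \langle U f(|A|) Bx,\; g(|A^*|) y\rangle ,
\]
so that, since $\|U\|\le 1$, the Cauchy--Schwarz inequality gives $\abs{\langle ABx, y\rangle} \le \|f(|A|)Bx\|\,\|g(|A^*|)y\|$.

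The place where the hypothesis $|A|B = B^*|A|$ enters is the estimate $\|f(|A|)Bx\| \le r(B)\,\|f(|A|)x\|$. The relation $|A|B = B^*|A|$ says precisely that $B$ is symmetric for the semi-inner product $\langle\cdot,\cdot\rangle_{|A|} := \langle |A|\cdot,\cdot\rangle$; equivalently, when $|A|$ is invertible, $S := |A|^{1/2}B|A|^{-1/2}$ is self-adjoint, hence $\|S\| = r(S) = r(B)$, because $S$ is similar to $B$ and a self-adjoint operator is normaloid. Rearranging $\|S\|\le r(B)$ yields the operator inequality $B^*|A|B \le r(B)^2|A|$, which is exactly the required estimate in the case $f(t)=\sqrt{t}$; the general $f$ is then obtained by feeding this into the factorisation $|A| = f(|A|)g(|A|)$ as in \cite{K88}. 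When $|A|$ is not invertible one first notes $B(\ker|A|)\subseteq \ker|A|$ (since $x\in\ker|A|$ forces $|A|Bx = B^*|A|x = 0$) and then runs the whole argument on $\overline{\mathrm{ran}\,|A|}$, where $|A|$ has dense range, approximating $|A|^{-1/2}$ on a dense domain.

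I expect the genuine obstacle to be this last reduction: making the formal manipulations with $|A|^{-1/2}$ rigorous when $|A|$ is not bounded below, and checking that the self-adjointness of $S$ (hence the identity $\|S\| = r(B)$) survives the passage to the closure. This is the technical content carried out in \cite{K88}; once it is in place, the asserted inequality follows by assembling the two displayed steps above.
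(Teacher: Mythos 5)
The paper contains no proof of this lemma --- it is imported verbatim from \cite[Th.~5]{K88} --- so there is no in-paper argument to compare yours against; I can only judge the outline on its own terms. Up to the point you dismiss as routine, it is sound: the polar-decomposition bookkeeping and the intertwining $Uh(|A|)=h(|A^*|)U$ are correct, and your reading of the hypothesis $|A|B=B^*|A|$ as self-adjointness of $B$ for the semi-inner product $\langle |A|\cdot,\cdot\rangle$, leading to $\|\,|A|^{1/2}B|A|^{-1/2}\|=r(B)$ and hence $B^*|A|B\le r(B)^2|A|$, is exactly right. This genuinely proves the lemma in the case $f(t)=g(t)=\sqrt{t}$, which is the case used in Corollary~\ref{cor-k-03} and Theorem~\ref{est4}(ii).

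The genuine gap is the sentence ``the general $f$ is then obtained by feeding this into the factorisation $|A|=f(|A|)g(|A|)$.'' The estimate your first display requires, namely $\|f(|A|)Bx\|\le r(B)\|f(|A|)x\|$, does not follow from $B^*|A|B\le r(B)^2|A|$, and it is false for general $f$. Take
\[
A=\begin{pmatrix}1&0\\0&2\end{pmatrix},\qquad B=\begin{pmatrix}0&2\\1&0\end{pmatrix},
\]
so that $|A|B=\left(\begin{smallmatrix}0&2\\2&0\end{smallmatrix}\right)$ is self-adjoint (hence $|A|B=B^*|A|$) and $r(B)=\sqrt{2}$ because $B^2=2I$. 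With $f(t)=t$ one gets $\|f(|A|)Be_1\|=\|2e_2\|=2>\sqrt{2}=r(B)\|f(|A|)e_1\|$. The obstruction is structural: $|A|B=B^*|A|$ does not propagate to $h(|A|)B=B^*h(|A|)$ for other functions $h$ (here $|A|^2B\ne B^*|A|^2$), so $f(|A|)Bf(|A|)^{-1}$ is no longer self-adjoint and its norm can exceed $r(B)$. Worse, the same example violates the conclusion of the lemma as stated: with the admissible pair $f(t)=t$, $g(t)\equiv 1$ (or $f(t)=t^{3/4}$, $g(t)=t^{1/4}$ if one insists that both vanish at $0$) and $x=e_1$, $y=e_2$, the left-hand side is $|\langle ABe_1,e_2\rangle|=2$ while the right-hand side is $\sqrt{2}\cdot\|{|A|}e_1\|\cdot\|e_2\|=\sqrt{2}$ (respectively $2^{3/4}$). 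So the difficulty is not the closure argument for non-invertible $|A|$ that you anticipate; no proof can close this gap for arbitrary $f,g$ with $f(t)g(t)=t$, and the precise hypotheses of \cite[Th.~5]{K88} should be rechecked before the lemma is used beyond $f=g=\sqrt{t}$ --- a caveat that also touches Theorems~\ref{est3a} and~\ref{est3}, where the general case is invoked.
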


\begin{theorem}\label{est3a}
Let $A,B \in \mathcal{B}({\mathcal{H}})$ be such that $|A|B=B^*|A|$ and let $ \alpha + \beta = 1.$ If $f$ and $g$ are two non-negative continuous functions on $[0,\infty)$ satisfying $f(t)g(t)=t $, $\forall ~ t\geq 0$ then\\
$	(i) ~\|AB\|_{\alpha,\beta} \leq \left \| \alpha r^{2p}(B)\left ( \frac{1}{n} f^{np}(|A|) + \frac{1}{m} g^{mp}(|A^*|)\right )^2 +\beta ((AB)^*(AB))^p  \right \|^{\frac{1}{2p}} ~ \mbox{and} $ \\
	$ (ii)~	w(AB) \leq \inf_{\alpha+ \beta=1 } \left \| \alpha r^{2p}(B)\left ( \frac{1}{n} f^{np}(|A|) + \frac{1}{m} g^{mp}(|A^*|)\right )^2 +\beta ((AB)^*(AB))^p  \right \|^{\frac{1}{2p}}, $\\
where $n,m > 1$ with $\frac{1}{n}+\frac{1}{m}=1,$ $p\geq 1,$ $pn \geq2$ and $pm \geq2.$

\end{theorem}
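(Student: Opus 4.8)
The plan is to bound $\alpha|\langle ABx,x\rangle|^2 + \beta\|ABx\|^2$ pointwise on unit vectors $x$, apply $t^p$-convexity to split off the two terms, estimate each of them using Lemma~\ref{lem7}, and then recognize the resulting quantity as the norm of a single positive operator. First I would fix a unit vector $x\in\mathcal{H}$ and use the convexity of $t\mapsto t^p$ for $p\geq 1$ together with $\alpha+\beta=1$ to write
\[
\left(\alpha|\langle ABx,x\rangle|^2 + \beta\|ABx\|^2\right)^p \leq \alpha|\langle ABx,x\rangle|^{2p} + \beta\|ABx\|^{2p}.
\]
For the first term, Lemma~\ref{lem7} (with $y=x$) gives $|\langle ABx,x\rangle| \leq r(B)\|f(|A|)x\|\,\|g(|A^*|)x\|$, so
\[
|\langle ABx,x\rangle|^{2p} \leq r^{2p}(B)\,\langle f^2(|A|)x,x\rangle^p \langle g^2(|A^*|)x,x\rangle^p.
\]
Here I would use $\langle f^2(|A|)x,x\rangle^p \leq \langle f^{2p}(|A|)x,x\rangle$ (Lemma~\ref{lem5}, since $f^2(|A|)\geq 0$) and similarly for $g$, followed by the Power--Young inequality (Lemma~\ref{lem4}(ii)) applied to $a=\langle f^{2}(|A|)x,x\rangle^{p/?}$... — more precisely, apply the AM--GM/Young step directly to the product $\langle f^{2}(|A|)x,x\rangle^p\langle g^{2}(|A^*|)x,x\rangle^p$ with exponents $n,m$, then Lemma~\ref{lem5} on each factor, to arrive at
\[
|\langle ABx,x\rangle|^{2p} \leq r^{2p}(B)\left\langle \Big(\tfrac{1}{n}f^{np}(|A|) + \tfrac{1}{m}g^{mp}(|A^*|)\Big)x,\,x\right\rangle^{2},
\]
after one more application of Lemma~\ref{lem5} to the self-adjoint positive operator $\tfrac1n f^{np}(|A|)+\tfrac1m g^{mp}(|A^*|)$ to pull the square inside as $\langle (\cdot)^2 x,x\rangle$. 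The second term is simply $\|ABx\|^{2p} = \langle (AB)^*(AB)x,x\rangle^p \leq \langle ((AB)^*(AB))^p x,x\rangle$ by Lemma~\ref{lem5}.

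Combining, the pointwise bound becomes
\[
\left(\alpha|\langle ABx,x\rangle|^2 + \beta\|ABx\|^2\right)^p \leq \left\langle \Big(\alpha r^{2p}(B)\big(\tfrac1n f^{np}(|A|)+\tfrac1m g^{mp}(|A^*|)\big)^2 + \beta((AB)^*(AB))^p\Big)x,\,x\right\rangle,
\]
where the operator on the right is positive and self-adjoint, so its supremum over unit vectors equals its operator norm. Taking the supremum over all unit $x$, raising to the power $\tfrac1{2p}$, and recalling the definition of $\|AB\|_{\alpha,\beta}$ gives part $(i)$. For part $(ii)$, since $\alpha=1,\beta=0$ is allowed in the hypothesis $\alpha+\beta=1$ and then $\|AB\|_{1,0}=w(AB)$, Theorem~\ref{norm} gives $w(AB)\leq \|AB\|_{\alpha,\beta}$ for every admissible pair $(\alpha,\beta)$ — indeed $\sqrt{\alpha+\beta}\,w(AB)=w(AB)\leq\|AB\|_{\alpha,\beta}$; taking the infimum over $\alpha+\beta=1$ of the bound in $(i)$ yields $(ii)$.

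The main obstacle is bookkeeping the chain of elementary inequalities so that the exponents $n,m,p$ end up in exactly the claimed positions: one must be careful about the order in which Lemma~\ref{lem5} (the operator Jensen-type inequality, valid for exponent $\geq 1$, which is where the constraints $pn\geq 2$ and $pm\geq 2$ enter) and Lemma~\ref{lem4}(ii) are applied, since $\langle f^2(|A|)x,x\rangle^p$ must first be converted to $\langle f^{2p}(|A|)x,x\rangle$ or the Young splitting must be done before squaring, and these do not commute freely. Everything else — positivity of the final operator, identifying its norm with the supremum of its quadratic form, and the passage to $w(AB)$ — is routine given the earlier results.
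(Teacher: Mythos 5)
Your overall strategy --- a pointwise bound on unit vectors, convexity of $t^p$ with $\alpha+\beta=1$ to split the two terms, Lemma \ref{lem7} with $y=x$ for the numerical-range term, Lemma \ref{lem5} for $\|ABx\|^{2p}$, and identification of the supremum of the quadratic form of a positive operator with its norm --- is exactly the paper's, and your treatment of part (ii) via Theorem \ref{norm} is also the same. The problem is the one step you yourself flag as the ``main obstacle'': the order in which you raise to powers and apply the Young splitting is wrong as written, and the chain you sketch does not produce the stated bound. Starting from $|\langle ABx,x\rangle|^{2p}\leq r^{2p}(B)\,\langle f^2(|A|)x,x\rangle^{p}\langle g^2(|A^*|)x,x\rangle^{p}$ and applying Lemma \ref{lem4}(ii) with exponents $n,m$ to the two factors gives $\frac{1}{n}\langle f^2(|A|)x,x\rangle^{pn}+\frac{1}{m}\langle g^2(|A^*|)x,x\rangle^{pm}$, and Lemma \ref{lem5} then yields $\frac{1}{n}\langle f^{2pn}(|A|)x,x\rangle+\frac{1}{m}\langle g^{2pm}(|A^*|)x,x\rangle$: the exponents come out as $2pn$ and $2pm$ rather than $np$ and $mp$, and the result is a sum, not the square of an inner product. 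This quantity is not dominated by $\langle(\frac{1}{n}f^{np}(|A|)+\frac{1}{m}g^{mp}(|A^*|))^2x,x\rangle$ in general, so the phrase ``to arrive at'' the displayed inequality is not justified.

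The correct sequencing (the paper's) is: apply Young to the unsquared, un-powered product $\|f(|A|)x\|\,\|g(|A^*|)x\|$ first, obtaining $\frac{1}{n}\langle f^2(|A|)x,x\rangle^{n/2}+\frac{1}{m}\langle g^2(|A^*|)x,x\rangle^{m/2}$; raise to the power $p$ (not $2p$) and use the power-mean part of Lemma \ref{lem4} to push $p$ inside the convex combination, giving exponents $np/2$ and $mp/2$; apply Lemma \ref{lem5} with those exponents --- this is exactly where $pn\geq 2$ and $pm\geq 2$ are needed, whereas your chain would only require $pn\geq 1$, another sign that it is off --- to get $|\langle ABx,x\rangle|^{p}\leq r^{p}(B)\,\langle(\frac{1}{n}f^{np}(|A|)+\frac{1}{m}g^{mp}(|A^*|))x,x\rangle$. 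Only then square both sides and apply Lemma \ref{lem5} once more, with exponent $2$, to the positive operator $\frac{1}{n}f^{np}(|A|)+\frac{1}{m}g^{mp}(|A^*|)$, which puts the square inside the inner product. With that reordering, the remainder of your argument (the $\|ABx\|^{2p}$ term, the passage to the operator norm, and part (ii)) goes through verbatim.
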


\begin{proof}
Let $x$ be a unit vector in $\mathcal{H}.$ Then using Lemma \ref{lem7} and Lemma \ref{lem4} (ii), we get
\begin{eqnarray*}
|\langle ABx,x \rangle| &\leq& r(B)\|f(|A|)x\|\|g(|A^*|)x\|\\
 \Rightarrow  |\langle ABx,x \rangle| &\leq&  r(B)\left (\frac{1}{n}\| f(|A|)x\|^n +\frac{1}{m}\|g(|A^*|)x \|^m  \right )\\
 \Rightarrow  |\langle ABx,x \rangle| &\leq & r(B)\left (\frac{1}{n} \left \langle   f^2(|A|)x ,x  \right \rangle ^\frac{n}{2}  +\frac{1}{m}  \left \langle  g^2(|A^*|)x,x   \right \rangle ^\frac{m}{2} \right )\\
\Rightarrow  |\langle ABx,x \rangle|^p &\leq& r^p(B)\left (\frac{1}{n} \left \langle   f^2(|A|)x ,x  \right \rangle ^\frac{n}{2}  +\frac{1}{m}  \left \langle  g^2(|A^*|)x,x   \right \rangle ^\frac{m}{2} \right )^p\\
\Rightarrow  |\langle ABx,x \rangle|^p &\leq& r^p(B)\left (\frac{1}{n} \left \langle   f^2(|A|)x ,x  \right \rangle ^\frac{np}{2}  +\frac{1}{m}  \left \langle  g^2(|A^*|)x,x   \right \rangle ^\frac{mp}{2} \right )\\
\Rightarrow  |\langle ABx,x \rangle|^p &\leq&  r^p(B)\left (\frac{1}{n} \left \langle   f^{np}(|A|)x ,x  \right \rangle  +\frac{1}{m}  \left \langle  g^{mp}(|A^*|)x,x   \right \rangle \right )\\ 
\Rightarrow  |\langle ABx,x \rangle|^{2p} &\leq& r^{2p}(B)\left \langle \left ( \frac{1}{n} f^{np}(|A|) + \frac{1}{m} g^{mp}(|A^*|)\right )x,x    \right \rangle^2\\
\Rightarrow  |\langle ABx,x \rangle|^{2p} &\leq& r^{2p}(B)\left \langle \left ( \frac{1}{n} f^{np}(|A|) + \frac{1}{m} g^{mp}(|A^*|)\right )^2x,x    \right \rangle.\\
\end{eqnarray*}

Now using Lemma \ref{lem5}, we get
$$\|ABx\|^{2p} = \langle ABx,ABx \rangle ^p = \langle (AB)^*(AB)x,x \rangle ^p \leq \langle ((AB)^*(AB))^px,x \rangle .$$
Then  using Lemma \ref{lem4} (ii), we get
\begin{eqnarray*}
\|AB\|_{\alpha,\beta}^{2p} &=&  \sup_{\|x\|=1}\left \{ \alpha |\langle ABx ,x \rangle |^2+\beta \|ABx\|^2 \right \}^p\\
&\leq&  \sup_{\|x\|=1}\left \{ \alpha |\langle ABx ,x \rangle |^{2p}+\beta \|ABx\|^{2p} \right \}\\
&\leq&  \sup_{\|x\|=1}\left \langle \left ( \alpha r^{2p}(B)\left ( \frac{1}{n} f^{np}(|A|) + \frac{1}{m} g^{mp}(|A^*|)\right )^2 +\beta ((AB)^*(AB))^p \right )x,x    \right \rangle \\
&=& \left \| \alpha r^{2p}(B)\left ( \frac{1}{n} f^{np}(|A|) + \frac{1}{m} g^{mp}(|A^*|)\right )^2 +\beta ((AB)^*(AB))^p  \right \|.
\end{eqnarray*}
Thus we obtain the inequality in $(i)$.
Next using Theorem \ref{norm}, namely, $ w(AB) \leq  \frac{1}{\sqrt{\alpha + \beta} } \|AB\|_{\alpha,\beta},$ we get,
\begin{eqnarray*}
	w(AB) & \leq &   \frac{1}{\sqrt{\alpha+\beta}} \left \| \alpha r^{2p}(B)\left ( \frac{1}{n} f^{np}(|A|) + \frac{1}{m} g^{mp}(|A^*|)\right )^2 +\beta ((AB)^*(AB))^p  \right \|^{\frac{1}{2p}}. 
\end{eqnarray*}
Taking infimum over $\alpha, \beta,$ with $\alpha+ \beta=1$ we get the  inequality in $(ii)$.
\end{proof}

\begin{remark}
It is clear that the inequality obtained in Theorem \ref{est3a}(ii) improves on the existing inequalities \cite[Th. 2.5]{BP} and \cite[Th. 3.1]{A}.
\end{remark}
Next we prove the following inequality:

\begin{theorem}\label{est3}
Let $A,B \in \mathcal{B}({\mathcal{H}})$ be such that $AB=BA,AB^*=B^*A$ and $|A|B=B^*|A|$ and let $\alpha+\beta=1.$  If $f$ and $g$ are two non-negative continuous functions on $[0,\infty)$ satisfying $f(t)g(t)=t $, $\forall ~ t\geq 0$,  then  
\begin{eqnarray*}
(i)~~\|AB\|_{\alpha,\beta} &\leq& \left \| \alpha r^{2p}(B) X^2 +\beta r^p(B^*B) Y \right \|^{\frac{1}{2p}}~~\textit{and}\\
(ii)~~w(AB) &\leq&  \inf_{\alpha+ \beta=1} \left \| \alpha r^{2p}(B) X^2 +\beta r^p(B^*B) Y \right \|^{\frac{1}{2p}},
\end{eqnarray*}
where  $X=\frac{1}{n} f^{np}(|A|) + \frac{1}{m} g^{mp}(|A^*|), ~Y=\frac{1}{n} f^{np}(|A^*A|)+ \frac{1}{m}g^{mp}(|A^*A|), $ and  $n,m > 1$ with $\frac{1}{n}+\frac{1}{m}=1,$ $p\geq 1,$ $pn \geq2,$ $pm \geq2.$
\end{theorem}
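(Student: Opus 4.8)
The plan is to mimic closely the proof of Theorem~\ref{est3a}, the only new ingredient being a sharper estimate for $\|ABx\|^{2p}$ that exploits the extra commutativity hypotheses $AB=BA$ and $AB^*=B^*A$. First I would fix a unit vector $x\in\mathcal H$. For the term $|\langle ABx,x\rangle|^{2p}$ the bound is already available: repeating verbatim the chain of inequalities in the proof of Theorem~\ref{est3a}(i) (apply Lemma~\ref{lem7}, then Lemma~\ref{lem4}(ii), then Lemma~\ref{lem5}, then Lemma~\ref{lem4}(ii) again to absorb the exponents $n/2, m/2$ into $np/2, mp/2$, and finally Lemma~\ref{lem5} once more to pass from $\langle f^2(|A|)x,x\rangle^{np/2}$ to $\langle f^{np}(|A|)x,x\rangle$), one obtains
\[
|\langle ABx,x\rangle|^{2p}\ \le\ r^{2p}(B)\,\langle X^2 x,x\rangle,
\]
where $X=\tfrac1n f^{np}(|A|)+\tfrac1m g^{mp}(|A^*|)$ as in the statement.

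The new step is the estimate for $\|ABx\|^{2p}$. Here I would write $\|ABx\|^{2p}=\langle (AB)^*(AB)x,x\rangle^p=\langle B^*A^*ABx,x\rangle^p$ and use $AB=BA$, $AB^*=B^*A$ to commute $A$ and $B$ freely, rewriting $(AB)^*(AB)=B^*(A^*A)B$. The point is that $B^*(A^*A)B$ should be viewed as ``$A^*A$ acted on by $B$'' in the sense of Lemma~\ref{lem7} applied to the pair $(A^*A,\,B)$: the hypotheses $AB=BA$, $AB^*=B^*A$ give $|A^*A|B=(A^*A)B=B^*(A^*A)=B^*|A^*A|$ (using that $A^*A\ge 0$ so $|A^*A|=A^*A$, and that $B$ commutes with $A^*A$ and $B^*$ commutes with $A^*A$), so Lemma~\ref{lem7} with the same functions $f,g$ yields $|\langle (A^*A)Bu,v\rangle|\le r(B)\|f(|A^*A|)u\|\,\|g(|A^*A|^*)v\|$ for all $u,v$. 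Taking $u=v=Bx$ — more precisely, running the same Power-Young/Kittaneh chain as above but with $A$ replaced by $A^*A$ — produces
\[
\|ABx\|^{2p}=\langle (A^*A)Bx,Bx\rangle^p\ \le\ r^p(B^*B)\,\langle Y x,x\rangle,
\]
with $Y=\tfrac1n f^{np}(|A^*A|)+\tfrac1m g^{mp}(|A^*A|)$; the factor $r(B)^{p}\cdot(\text{bound on }\|Bx\|^{2})$ combines into $r^p(B^*B)$ after one more application of Lemma~\ref{lem5} and the spectral radius identity $r(B)^2=r(B^*B)$ valid here because $B$ is normal (which follows from $AB^*=B^*A$, $AB=BA$ — in fact one should only need $\|B x\|$-type control, so I would be careful to route this through $r(B^*B)$ directly rather than through normality of $B$; the cleanest route is to apply Lemma~\ref{lem7} once to $(A^*A,B)$ so that the operator radius appearing is literally $r(B)$, and separately bound $\|Bx\|^{2p}\le r^p(B^*B)\langle\cdots\rangle$ using that $B^*B$-powers dominate via Lemma~\ref{lem5}).

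Having both pointwise bounds, I would combine them exactly as in Theorem~\ref{est3a}: using $\alpha+\beta=1$ and convexity of $t\mapsto t^p$,
\[
\|AB\|_{\alpha,\beta}^{2p}=\sup_{\|x\|=1}\bigl(\alpha|\langle ABx,x\rangle|^2+\beta\|ABx\|^2\bigr)^p
\le \sup_{\|x\|=1}\bigl(\alpha|\langle ABx,x\rangle|^{2p}+\beta\|ABx\|^{2p}\bigr),
\]
and substituting the two estimates gives
\[
\|AB\|_{\alpha,\beta}^{2p}\le\sup_{\|x\|=1}\bigl\langle\bigl(\alpha r^{2p}(B)X^2+\beta r^p(B^*B)Y\bigr)x,x\bigr\rangle=\bigl\|\alpha r^{2p}(B)X^2+\beta r^p(B^*B)Y\bigr\|,
\]
which is (i). For (ii), apply $w(AB)\le\frac{1}{\sqrt{\alpha+\beta}}\|AB\|_{\alpha,\beta}=\|AB\|_{\alpha,\beta}$ from Theorem~\ref{norm} and take the infimum over $\alpha+\beta=1$. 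The main obstacle I anticipate is bookkeeping in the second step: verifying that the hypothesis $|A|B=B^*|A|$ together with the two commutation relations legitimately transfers to a relation of the form $|A^*A|B=B^*|A^*A|$ needed to invoke Lemma~\ref{lem7} for the pair $(A^*A,B)$ with the \emph{same} $f,g$, and correctly tracking which spectral radius ($r(B)$ vs. $r(B^*B)$) each application contributes. Everything else is a routine repetition of the Power-Young and Kittaneh mixed-inequality manipulations already carried out in the proof of Theorem~\ref{est3a}.
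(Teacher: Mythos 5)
Your overall architecture (bound $|\langle ABx,x\rangle|^{2p}$ exactly as in Theorem~\ref{est3a}, find a separate operator bound for $\|ABx\|^{2p}$, combine via convexity of $t^p$, and finish with Theorem~\ref{norm}) coincides with the paper's, and your first and last steps are fine. The gap is in the middle step, and it is a genuine one. You propose to invoke Lemma~\ref{lem7} for the pair $(A^*A,B)$, which requires the intertwining relation $|A^*A|B=B^*|A^*A|$, i.e.\ $(A^*A)B=B^*(A^*A)$. This does \emph{not} follow from the hypotheses: $AB=BA$ and $AB^*=B^*A$ (together with their adjoints) only give that $A^*A$ \emph{commutes} with $B$, i.e.\ $(A^*A)B=B(A^*A)$, and $B(A^*A)\neq B^*(A^*A)$ unless $B$ is self-adjoint. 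Your fallback, that $B$ is normal so that $r(B)^2=r(B^*B)$, is also unfounded: take $A=I$, for which all three hypotheses hold vacuously while $B$ is arbitrary. Moreover, even granting the lemma for the pair $(A^*A,B)$, the relevant quantity is $\langle (A^*A)Bx,Bx\rangle$, and applying the lemma with vectors $x$ and $Bx$ leaves a stray factor $\|g(|A^*A|)Bx\|$ that does not reassemble into $r^p(B^*B)\langle Yx,x\rangle$.

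The paper sidesteps all of this by applying Lemma~\ref{lem7} to the pair $(A^*A,\,B^*B)$ instead. One first checks $(AB)^*(AB)=A^*AB^*B$, and the required relation $|A^*A|(B^*B)=(B^*B)^*|A^*A|$ is simply the statement that $A^*A$ commutes with the \emph{self-adjoint} operator $B^*B$, which does follow from the commutation hypotheses. The lemma then gives
$\|ABx\|^2=\langle (A^*A)(B^*B)x,x\rangle\le r(B^*B)\,\|f(|A^*A|)x\|\,\|g(|A^*A|)x\|$
directly, with the correct constant $r(B^*B)$ appearing in one stroke, and the same Power--Young/Kittaneh chain you describe yields $\|ABx\|^{2p}\le r^p(B^*B)\langle Yx,x\rangle$. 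Replacing your pair $(A^*A,B)$ by $(A^*A,B^*B)$ repairs the argument; the rest of your proposal then goes through as written.
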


\begin{proof}
Let $x \in \mathcal{H}$ with $\|x\|=1.$ 
Proceeding similarly as in the proof of Theorem \ref{est3a}, we get 
$$|\langle ABx,x \rangle|^{2p} \leq  r^{2p}(B)\left \langle \left ( \frac{1}{n} f^{np}(|A|) + \frac{1}{m} g^{mp}(|A^*|)\right )^2x,x    \right \rangle.$$
\noindent Noting that $(AB)^*(AB)=A^*AB^*B, ~|A^*A|B^*B=B^*B|A^*A|$ and using Lemma ~\ref{lem7} we get,
\begin{eqnarray*}
\|ABx\|^2 &=& \langle A^*AB^*Bx,x \rangle \\
\Rightarrow \|ABx\|^2 &\leq& r(B^*B) \left\|f(|A^*A|)x  \right \|\left\| g(|A^*A|)x \right \|\\
\Rightarrow \|ABx\|^{2p} &\leq& r^p(B^*B) \left (\frac{1}{n}\left\|f(|A^*A|)x  \right \|^n   + \frac{1}{m} \left\| g(|A^*A|)x \right \|^m  \right )^p\\
\Rightarrow \|ABx\|^{2p} &\leq& r^p(B^*B) \left (\frac{1}{n} \left \langle f^2(|A^*A|)x ,x \right \rangle ^ \frac{np}{2} +\frac{1}{m} \left \langle g^2(|A^*A|)x ,x \right \rangle ^\frac{mp}{2}  \right )\\
\Rightarrow \|ABx\|^{2p} &\leq& r^p(B^*B) \left (\frac{1}{n} \left \langle f^{np}(|A^*A|)x ,x \right \rangle  +\frac{1}{m} \left \langle g^{mp}(|A^*A|)x ,x \right \rangle   \right )\\
\Rightarrow \|ABx\|^{2p} &\leq& r^p(B^*B) \left \langle \left ( \frac{1}{n} f^{np}(|A^*A|)+ \frac{1}{m}g^{mp}(|A^*A|)\right )x , x \right \rangle.\\
\end{eqnarray*}

Next using Lemma \ref{lem4}, we get

\begin{eqnarray*}
\|AB\|_{\alpha,\beta}^{2p} &=&  \sup_{\|x\|=1}\left \{ \alpha |\langle ABx ,x \rangle |^2+\beta \|ABx\|^2 \right \}^p\\
&\leq&  \sup_{\|x\|=1}\left \{ \alpha |\langle ABx ,x \rangle |^{2p}+\beta \|ABx\|^{2p} \right \}\\
&\leq& \sup_{\|x\|=1}  \left \{ \left \langle \left (\alpha r^{2p}(B) X^2 +\beta r^p(B^*B)Y \right )x , x \right \rangle \right \} \\
&=& \left \| \alpha r^{2p}(B)X^2 +\beta r^p(B^*B)Y \right \|,
\end{eqnarray*}
where  $X=\frac{1}{n} f^{np}(|A|) + \frac{1}{m} g^{mp}(|A^*|)  $ and $ Y=\frac{1}{n} f^{np}(|A^*A|)+ \frac{1}{m}g^{mp}(|A^*A|).$ 
Thus we obtain the inequality in (i). Next using Theorem \ref{norm}, namely, $ w(AB) \leq  \frac{1}{\sqrt{\alpha + \beta} } \|AB\|_{\alpha,\beta},$ we get
\begin{eqnarray*}
w(AB) &\leq&  \frac{1}{\sqrt{\alpha+\beta}}\left \| \alpha r^{2p}(B) X^2 +\beta r^p(B^*B) Y \right \|^{\frac{1}{2p}}.
\end{eqnarray*}
Taking infimum over $\alpha, \beta$, with $\alpha+ \beta=1,$ we get the  inequality in $(ii)$.
\end{proof}

In particular, if we consider $n=m=2,$ $p=1$ and $f(t)=g(t)=t^{\frac{1}{2}}$ in Theorem \ref{est3}(i), then we get the following inequality:

\begin{cor}\label{cor-k-03}
Let $A,B \in \mathcal{B}({\mathcal{H}})$ such that $AB=BA,AB^*=B^*A$ and $|A|B=B^*|A|.$ Also let $ \alpha + \beta = 1.$ Then
\begin{eqnarray*}
\|AB\|_{\alpha,\beta}^2 &\leq& \left \| \frac{\alpha}{4} r^2(B) \left (|A|+|A^*| \right)^2 +\beta r(B^*B)A^*A \right \|.
\end{eqnarray*}
\end{cor}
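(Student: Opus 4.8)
The plan is to obtain this corollary as a direct specialization of Theorem \ref{est3}(i). First I would check that the choices $n=m=2$, $p=1$ and $f(t)=g(t)=t^{1/2}$ are admissible in that theorem: indeed $n,m>1$ with $\frac1n+\frac1m=\frac12+\frac12=1$, $p=1\geq 1$, and $pn=pm=2\geq 2$, while $f$ and $g$ are non-negative continuous functions on $[0,\infty)$ with $f(t)g(t)=t^{1/2}\cdot t^{1/2}=t$ for all $t\geq 0$. The standing hypotheses $AB=BA$, $AB^*=B^*A$, $|A|B=B^*|A|$ and $\alpha+\beta=1$ are already assumed, so Theorem \ref{est3}(i) applies verbatim.

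Next I would simplify the operators $X$ and $Y$ appearing in Theorem \ref{est3}(i) under this choice of parameters. Since $np=mp=2$ and $f^2(t)=g^2(t)=t$, we get $f^{np}(|A|)=|A|$ and $g^{mp}(|A^*|)=|A^*|$, hence
\[
X=\tfrac12 f^{np}(|A|)+\tfrac12 g^{mp}(|A^*|)=\tfrac12\bigl(|A|+|A^*|\bigr),
\qquad
X^2=\tfrac14\bigl(|A|+|A^*|\bigr)^2.
\]
Likewise, since $A^*A$ is a positive operator we have $|A^*A|=A^*A$, so $f^{np}(|A^*A|)=g^{mp}(|A^*A|)=A^*A$ and therefore $Y=\tfrac12 A^*A+\tfrac12 A^*A=A^*A$. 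Finally $r^{2p}(B)=r^2(B)$ and $r^p(B^*B)=r(B^*B)$.

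Substituting these into the bound $\|AB\|_{\alpha,\beta}\leq\bigl\|\alpha r^{2p}(B)X^2+\beta r^p(B^*B)Y\bigr\|^{1/2p}$ of Theorem \ref{est3}(i) and raising both sides to the power $2p=2$ yields
\[
\|AB\|_{\alpha,\beta}^2\leq\left\|\tfrac{\alpha}{4}r^2(B)\bigl(|A|+|A^*|\bigr)^2+\beta r(B^*B)A^*A\right\|,
\]
which is precisely the asserted inequality. There is no real obstacle here beyond bookkeeping: the only point requiring a moment's care is recognizing that $|A^*A|=A^*A$ so that $Y$ collapses to $A^*A$, and keeping track of the exponent $2p$ when passing from the square-root bound to its square.
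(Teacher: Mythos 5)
Your proof is correct and is exactly the paper's own derivation: the authors obtain the corollary by setting $n=m=2$, $p=1$ and $f(t)=g(t)=t^{1/2}$ in Theorem \ref{est3}(i). Your verification of the parameter constraints and the simplifications $X=\tfrac12(|A|+|A^*|)$, $Y=A^*A$ is accurate and fills in the routine bookkeeping the paper leaves implicit.
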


In the  following  two theorems we obtain inequalities for the $(\alpha,\beta)$-norm of a bounded linear operator which generalize and improve some existing numerical radius inequalities.

\begin{theorem}\label{est4}
Let $T \in \mathcal{B}(\mathcal{H}).$ If $f$ and $g$ are two non-negative continuous functions on $[0,\infty)$ satisfying $f(t)g(t)=t,~\forall ~ t\geq 0$ then 
\begin{eqnarray*}
(i)~~\|T\|_{\alpha,\beta}^2 &\leq& \left \| \frac{\alpha }{4} \left ( f^2(|T|)+  g^2(|T^*|) \right) ^2+\beta T^*T \right\|~~\textit{and}\\
(ii)~~w(T) &\leq& \inf_{\alpha,\beta} \frac{1}{\sqrt{\alpha+\beta}}\left \|\frac{\alpha}{4}(|T|+|T^*|)^2 + \beta T^*T \right \|^{\frac{1}{2}} 
 \leq \frac{1}{2} \left \| |T|+|T^*|  \right \|
\end{eqnarray*}
\end{theorem}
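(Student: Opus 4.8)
The plan is to prove part $(i)$ directly from the definition of the $(\alpha,\beta)$-norm by estimating $\alpha|\langle Tx,x\rangle|^2 + \beta\|Tx\|^2$ pointwise on unit vectors, and then to deduce $(ii)$ from $(i)$ together with the lower bound $\sqrt{\alpha+\beta}\,w(T)\le \|T\|_{\alpha,\beta}$ from Theorem \ref{norm}. The key tool for the first summand is Lemma \ref{lem7} applied with $A=B=T$; since $|T|\cdot T = T^*\cdot|T|$ need not hold in general, one must instead invoke the mixed Cauchy--Schwarz/Kittaneh estimate in the form $|\langle Tx,x\rangle|\le \|f(|T|)x\|\,\|g(|T^*|)x\|$ valid for any pair of non-negative continuous $f,g$ with $f(t)g(t)=t$ — this is precisely the classical inequality behind Lemma \ref{lem7} when $B=I$, where the commutation hypothesis is automatic. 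From there, apply the AM--GM step $ab\le\frac14(a+b)^2$ with $a=\|f(|T|)x\|^2=\langle f^2(|T|)x,x\rangle$ and $b=\|g(|T^*|)x\|^2=\langle g^2(|T^*|)x,x\rangle$, giving
\[
|\langle Tx,x\rangle|^2 \le \tfrac14\big(\langle f^2(|T|)x,x\rangle + \langle g^2(|T^*|)x,x\rangle\big)^2 = \tfrac14\big\langle\big(f^2(|T|)+g^2(|T^*|)\big)x,x\big\rangle^2.
\]
Then use Lemma \ref{lem5} (with $p=2$) on the positive operator $f^2(|T|)+g^2(|T^*|)$ to pass from the square of the expectation to the expectation of the square, i.e. $\langle Sx,x\rangle^2\le\langle S^2x,x\rangle$.

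Combining, for every unit vector $x$,
\[
\alpha|\langle Tx,x\rangle|^2+\beta\|Tx\|^2 \le \tfrac{\alpha}{4}\big\langle\big(f^2(|T|)+g^2(|T^*|)\big)^2 x,x\big\rangle + \beta\langle T^*Tx,x\rangle = \Big\langle\Big(\tfrac{\alpha}{4}\big(f^2(|T|)+g^2(|T^*|)\big)^2+\beta T^*T\Big)x,x\Big\rangle,
\]
and taking the supremum over unit vectors — noting the operator in question is positive, so its numerical radius equals its norm — yields $(i)$.

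For $(ii)$, specialize $(i)$ to $f(t)=g(t)=\sqrt{t}$, so that $f^2(|T|)=|T|$ and $g^2(|T^*|)=|T^*|$, obtaining $\|T\|_{\alpha,\beta}^2\le\big\|\tfrac{\alpha}{4}(|T|+|T^*|)^2+\beta T^*T\big\|$. Dividing by $\alpha+\beta$ and using $\sqrt{\alpha+\beta}\,w(T)\le\|T\|_{\alpha,\beta}$ from Theorem \ref{norm} gives $w(T)\le\frac{1}{\sqrt{\alpha+\beta}}\big\|\tfrac{\alpha}{4}(|T|+|T^*|)^2+\beta T^*T\big\|^{1/2}$ for all admissible $\alpha,\beta$; taking the infimum gives the first inequality of $(ii)$. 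For the final inequality, evaluate the infimum candidate at $\alpha=1,\beta=0$: it reduces to $\big\|\tfrac14(|T|+|T^*|)^2\big\|^{1/2}=\tfrac12\big\||T|+|T^*|\big\|$ since $\|S^2\|^{1/2}=\|S\|$ for positive $S$. I expect the main subtlety to be the justification of the pointwise estimate $|\langle Tx,x\rangle|\le\|f(|T|)x\|\,\|g(|T^*|)x\|$ for general $T$ (no commutation available), which should be handled by the polar decomposition $T=U|T|$ and writing $|\langle Tx,x\rangle|=|\langle|T|x,U^*x\rangle|=|\langle f(|T|)x,\,g(|T|)U^*x\rangle|$ together with $g(|T|)U^*=U^*g(|T^*|)$; everything else is a routine chain of the Power-Mean inequality (Lemma \ref{lem4}) and Lemma \ref{lem5}.
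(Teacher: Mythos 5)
Your proposal is correct and follows essentially the same route as the paper: the mixed Cauchy--Schwarz estimate $|\langle Tx,x\rangle|\le\|f(|T|)x\|\,\|g(|T^*|)x\|$ (Lemma \ref{lem7} with the second operator equal to the identity, where the commutation hypothesis is automatic), then AM--GM, then Lemma \ref{lem5} to pass to $\langle S^2x,x\rangle$, a supremum over unit vectors, and finally the specialization $f=g=\sqrt{t}$ together with Theorem \ref{norm} for part $(ii)$. Your extra remark justifying the pointwise estimate via the polar decomposition is a sound filling-in of a step the paper simply cites.
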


\begin{proof}
Let $x \in \mathcal{H}$ with $\|x\|=1$. Then using Lemma \ref{lem7} and Lemma \ref{lem5}, we get
\begin{eqnarray*}
|\langle Tx,x \rangle| &\leq& \|f(|T|)x\|\|g(|T^*|)x\|\\
\Rightarrow  |\langle Tx,x \rangle| &\leq&  \langle f^2(|T|)x,x \rangle^{\frac{1}{2}}  \langle g^2(|T^*|)x ,x \rangle ^{\frac{1}{2}}\\
\Rightarrow  |\langle Tx,x \rangle| &\leq&  \frac{1}{2} \left ( \langle f^2(|T|)x,x \rangle+ \langle g^2(|T^*|)x ,x \rangle    \right) \\
\Rightarrow  |\langle Tx,x \rangle|^2 &\leq&    \left \langle \frac{1}{2}  \left (f^2(|T|)+  g^2(|T^*|)\right)x ,x \right \rangle ^2  \\
\Rightarrow  |\langle Tx,x \rangle|^2 &\leq&     \left \langle \frac{1}{4}  \left (f^2(|T|)+ g^2(|T^*|)\right)^2x ,x  \right \rangle. 
\end{eqnarray*}
 
Next, from the definition of the $(\alpha,\beta)$-norm, we get,
\begin{eqnarray*}
\|T\|_{\alpha,\beta}^2 &=& \sup_{\|x\|=1} \left \{ \alpha |\langle Tx,x \rangle|^2+ \beta \|Tx\|^2  \right\}\\
&\leq& \sup_{\|x\|=1} \left \{ \alpha\left \langle \frac{1}{4}  \left (f^2(|T|)+  g^2(|T^*|)\right)^2x ,x  \right \rangle +\beta \langle T^*Tx,x \rangle    \right\}\\
&=& \sup_{\|x\|=1} \left \langle \left ( \frac{\alpha}{4}  \left (f^2(|T|)+  g^2(|T^*|)\right)^2 + \beta T^*T\right )x,x \right \rangle \\
&=& \left \|   \frac{\alpha}{4}  \left (f^2(|T|)+  g^2(|T^*|)\right)^2 + \beta T^*T  \right \|.
\end{eqnarray*}
Thus we obtain the inequality in (i). In particular, if we take $f(t)=g(t)=\sqrt{t}$ in (i) and apply Theorem \ref{norm}, namely, $ w(T) \leq  \frac{1}{\sqrt{\alpha + \beta} } \|T\|_{\alpha,\beta},$ we get
\begin{eqnarray*}
w(T) &\leq&  \frac{1}{\sqrt{\alpha+\beta}}\left \|\frac{\alpha}{4}(|T|+|T^*|)^2 + \beta T^*T \right \|^{\frac{1}{2}}.
\end{eqnarray*}
Taking infimum over all $\alpha, \beta,$ we get the  inequality 
$$ w(T) \leq \inf_{\alpha,\beta} \Big \{ \frac{1}{\sqrt{\alpha+\beta}}\left \|\frac{\alpha}{4}(|T|+|T^*|)^2 + \beta T^*T \right \|^{\frac{1}{2}}\Big\}.$$
The remaining inequality follows from the case $ \alpha =1, \beta = 0 .$ 
\end{proof}

In the following theorem, we provide a condition on the operator $T,$ for which the bound obtained  in  Theorem \ref{est4}(ii)  is strictly sharper   than the inequality given in \cite[Th. 1]{K03}.

\begin{theorem}\label{less2}
Let $T\in \mathcal{B}(\mathcal{H}).$  \\
$(i) $  Suppose there exists $\alpha,\beta$ with $\alpha\beta\neq 0$ such that  $ M_{\alpha A+\beta B}\neq \emptyset,$ where $A=\frac{1}{4}(|T|+|T^*|)^2$ and $B=T^*T.$  Let $L=\mbox{span} \{M_{\alpha A+\beta B}\}.$ If $\left \| B\right\|_L<\|A\|$,   then $$w(T) \leq \frac{1}{\sqrt{\alpha+\beta}}\left \|\frac{\alpha}{4}(|T|+|T^*|)^2 + \beta T^*T \right \|^{\frac{1}{2}}<\frac{1}{2}\|~~ |T|+|T^* |~~\|.$$
$(ii)$  If  $4|T|^2-(|T|+|T^*|)^2 < 0$ and $ \beta \neq 0,$  then 
$$w(T) \leq \frac{1}{\sqrt{\alpha+\beta}}\left \|\frac{\alpha}{4}(|T|+|T^*|)^2 + \beta T^*T \right \|^{\frac{1}{2}}<\frac{1}{2}\|~~ |T|+|T^* |~~\|.$$
\end{theorem}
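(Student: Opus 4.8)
The plan is to leverage the already-established inequality in Theorem \ref{est4}(ii), namely $w(T) \le \frac{1}{\sqrt{\alpha+\beta}}\big\|\frac{\alpha}{4}(|T|+|T^*|)^2 + \beta T^*T\big\|^{1/2}$, and to show that under either hypothesis the middle quantity is \emph{strictly} smaller than $\frac12\big\||T|+|T^*|\big\|$. Writing $A = \frac14(|T|+|T^*|)^2$ and $B = T^*T$, both positive operators, the target strict inequality is equivalent to $\big\|\alpha A + \beta B\big\| < (\alpha+\beta)\|A\|$. Since $\|\alpha A + \beta B\| \le \alpha\|A\| + \beta\|B\|$ always, and $\|B\| = \|T\|^2 = \big\|\,|T|^2\big\|$ while $\|A\| = \frac14\big\||T|+|T^*|\big\|^2$, the rough mechanism is that $\|B\|$ is typically no larger than $\|A\|$; the content of the theorem is to pin down when the combined norm drops strictly below the convex-combination bound.

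For part $(i)$, I would argue as follows. Let $L = \mathrm{span}\{M_{\alpha A + \beta B}\}$ and suppose $\|B\|_L < \|A\|$ (here $\|B\|_L$ denotes the norm of $B$ restricted to, or tested against, unit vectors in $L$). Since $\alpha A + \beta B$ is a positive operator, its norm is attained (or approximately attained) along a sequence of unit vectors $\{x_n\}$ with $x_n \in M_{\alpha A + \beta B}$, so one may take $x_n \in L$. Then
\[
\|\alpha A + \beta B\| = \lim_n \langle (\alpha A + \beta B)x_n, x_n\rangle = \lim_n \big(\alpha \langle A x_n, x_n\rangle + \beta \langle B x_n, x_n\rangle\big) \le \alpha \|A\| + \beta \|B\|_L < \alpha\|A\| + \beta\|A\| = (\alpha+\beta)\|A\|,
\]
using $\langle A x_n, x_n\rangle \le \|A\|$ and $\langle B x_n, x_n\rangle \le \|B\|_L$ for unit vectors in $L$. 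Taking square roots and dividing by $\sqrt{\alpha+\beta}$ gives the strict inequality, and combining with Theorem \ref{est4}(ii) completes part $(i)$.

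For part $(ii)$, the hypothesis $4|T|^2 - (|T|+|T^*|)^2 < 0$ says precisely $B = T^*T < A$ as operators (note $|T|^2 = |T|^2 = T^*T$), hence $\langle Bx, x\rangle < \langle Ax, x\rangle$ for every nonzero $x$, and in particular $\|B\| < \|A\|$ (a strict operator inequality $B < A$ between positive operators forces the strict norm inequality when one passes to an approximate maximizer for $A$, using compactness of the closed unit ball in the weak topology, or a direct spectral argument). Then for any unit vector $x$, $\langle (\alpha A + \beta B)x, x\rangle = \alpha\langle Ax,x\rangle + \beta\langle Bx,x\rangle \le \alpha\|A\| + \beta\|B\| < (\alpha+\beta)\|A\|$ whenever $\beta \neq 0$, so $\|\alpha A + \beta B\| \le \alpha\|A\| + \beta\|B\| < (\alpha+\beta)\|A\|$, and the conclusion follows as before. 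The main obstacle I anticipate is the careful handling of the supremum/norm attainment: since $\mathcal{H}$ need not be finite-dimensional, $\|\alpha A + \beta B\|$ need only be approached along a sequence rather than attained, so the passage from the strict pointwise inequality $\langle Bx,x\rangle < \langle Ax,x\rangle$ to a strict gap in the norms (uniform in the relevant sequence) requires either restricting attention to the subspace $L$ as in part $(i)$, or invoking a quantitative version such as $B \le A - \varepsilon P$ on the relevant spectral subspace — one should make sure the stated hypotheses genuinely deliver this uniform gap rather than merely a pointwise one.
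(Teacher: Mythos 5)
Your overall plan---invoke Theorem \ref{est4}(ii) for the first inequality and reduce the second to showing $\|\alpha A+\beta B\|<(\alpha+\beta)\|A\|$---is exactly the paper's, and your part $(i)$ is essentially the paper's argument verbatim: take a unit vector $x\in M_{\alpha A+\beta B}$ (no limiting sequence is needed, since by definition the norm is exactly attained there), split the norm or the quadratic form of $(\alpha A+\beta B)x$ into its $A$- and $B$-contributions, bound the $B$-term by $\|B\|_L<\|A\|$ using $x\in L$, bound the $A$-term by $\|A\|$, and use $\beta>0$ for strictness.

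Part $(ii)$ is where there is a genuine gap. Your argument hinges on the implication that $B<A$ (meaning $\langle Bx,x\rangle<\langle Ax,x\rangle$ for all $x\neq 0$) forces $\|B\|<\|A\|$, and this is false for positive operators in infinite dimensions: take $A=I$ and $B=\operatorname{diag}(1-\tfrac{1}{n})$ on $\ell^2$; then $B<A$ pointwise but $\|B\|=\|A\|=1$. Neither weak compactness of the unit ball nor a ``direct spectral argument'' rescues this, because the quadratic form of $A$ need not attain its supremum. You correctly sense the difficulty in your closing caveat, but you leave it unresolved, and the resolution is not cosmetic: one must read the hypothesis $4|T|^2-(|T|+|T^*|)^2<0$ in the uniform sense $4|T|^2-(|T|+|T^*|)^2\le-\varepsilon I$ for some $\varepsilon>0$ (automatic in finite dimensions, where your pointwise argument does go through by compactness of the unit sphere), which yields $\|\alpha A+\beta B\|\le(\alpha+\beta)\|A\|-\beta\varepsilon$. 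For comparison, the paper's own proof of $(ii)$ takes a slightly different route that avoids the intermediate claim $\|B\|<\|A\|$ altogether: it rewrites $\frac{1}{\alpha+\beta}(\alpha A+\beta B)=A+\frac{\beta}{\alpha+\beta}(B-A)$ and passes directly from $B-A<0$ to $\bigl\|A+\frac{\beta}{\alpha+\beta}(B-A)\bigr\|<\|A\|$; that step, too, is only valid under the uniform reading of the hypothesis, so the paper is implicitly making the same assumption your argument needs.
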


\begin{proof}
$(i)$ 	Suppose there exist $\alpha,\beta $ with $ \alpha\beta\neq 0$ such that $\left \| B\right\|_L<\|A\|$. Let $x\in M_{\alpha A+\beta B}.$ Then we have, 
	\begin{eqnarray*} 
		w^2(T) \leq \frac{1}{\alpha+\beta} \left \|\alpha A+\beta B\right \| &=& \frac{1}{\alpha+\beta} \left \|(\alpha A+\beta B\right)x \|\\
		&\leq & \frac{\alpha}{\alpha+\beta}\|Ax\|+\frac{\beta}{\alpha+\beta}\|Bx\|\\
		&< & \frac{\alpha}{\alpha+\beta}\|A\|+\frac{\beta}{\alpha+\beta}\|A\|\\
		&=& \|A\|.
	\end{eqnarray*}
  \begin{eqnarray*} (ii)~~~~w^2(T) & \leq &  \frac{1}{\alpha+\beta}\left \|\frac{\alpha}{4}(|T|+|T^*|)^2 + \beta T^*T \right \|\\
	& = &  \left \| \frac{1}{4}\left ( |T|+|T^*|\right)^2+\frac{\beta}{\alpha+\beta}\left(|T|^2-\frac{1}{4}(|T|+|T^*|)^2\right) \right\|\\
	& < & \frac{1}{4} \left \|~~ |T|+|T^*|~~\right \|^2
\end{eqnarray*}
for all $\alpha, \beta$ with $\beta\neq 0$. 
	This completes the proof of the theorem.
\end{proof}

\begin{remark}
  We give an example to show that the condition mentioned in Theorem \ref{less2} is not necessary.  Consider $T=\left(\begin{array}{ccc}
	0 & 0 & 0\\
	2 & 0 & 0\\
	0 & 1 & 0
	\end{array}\right).$ 
Then it is easy to see that there exists $\alpha=\frac{12}{5},\beta=1$ such that $ 4=\left \| B\right\|_L>\|A\|=\frac{9}{4}$ but 
	\begin{eqnarray*}
	 w(T) &\leq& \frac{1}{\sqrt{\alpha+\beta}}\left \|\frac{\alpha}{4}(|T|+|T^*|)^2 + \beta T^*T \right \|^{\frac{1}{2}}=\sqrt{\frac{32}{17}}\approx 1.37198868114\\
	&<&\frac{1}{2}\|~~ |T|+|T^* |~~\|=\frac{3}{2}=1.5\\
	&<&\frac{1}{2}  \left ( \|T\|+\|T^2\| ^{\frac{1}{2}} \right)=\frac{2+\sqrt{2}}{2} \approx 1.70710678119.
	\end{eqnarray*}
\end{remark}

Next we provide a necessary and sufficient condition for the operator $T$  for which the bounds obtained  in Theorem \ref{est4} (ii) are equal.

\begin{theorem}\label{eql2}
Let $T \in \mathcal{B}(\mathcal{H})$ and let $A=\frac{1}{2}(|T|+|T^*|)$, $B=|T|$, $C=|T|-|T^*|$. If $M_A~~\cap~~ M_B~~\cap~~ \ker C\neq \emptyset$ then $$\frac{1}{\alpha+\beta} \left \| \frac{\alpha}{4} \left ( |T|+|T^*| \right)^2+\beta |T|^2 \right\|=\frac{1}{4}\left\| |T|+|T^*| \right\|^2=\|T\|^2,~~\mbox{for all}~~ \alpha, \beta.$$  Moreover, the converse is also true if $T$ is compact and $\alpha\beta\neq 0$. 
\end{theorem}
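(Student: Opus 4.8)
The plan is to prove both implications by reducing everything to the single scalar fact $\|A\|=\|B\|=\|T\|$, where $A=\frac12(|T|+|T^*|)$ and $B=|T|$, and then squeezing the quantity $\frac{1}{\alpha+\beta}\bigl\|\frac{\alpha}{4}(|T|+|T^*|)^2+\beta|T|^2\bigr\|$ between its obvious operator-norm upper bound $\|T\|^2$ and a matching lower bound obtained by testing the (positive) operator $P:=\frac{\alpha}{4}(|T|+|T^*|)^2+\beta|T|^2$ against a well-chosen unit vector. Throughout I shall use $\||T|\|=\||T^*|\|=\|T\|$ and $\|A\|\le\frac12(\||T|\|+\||T^*|\|)=\|T\|$. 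The case $T=0$ is trivial, so assume $T\neq 0$.

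First I would do the forward direction. Let $x$ be a unit vector in $M_A\cap M_B\cap\ker C$. Since $Cx=0$ means $|T|x=|T^*|x$, we get $Ax=\frac12(|T|+|T^*|)x=|T|x=Bx$, hence $\|A\|=\|Ax\|=\|Bx\|=\|B\|=\|T\|$; in particular $\frac14\||T|+|T^*|\|^2=\|2A\|^2/4=\|A\|^2=\|T\|^2$, which is the middle equality. For the first equality, on one side
$$\Bigl\|\tfrac{\alpha}{4}(|T|+|T^*|)^2+\beta|T|^2\Bigr\|\le\tfrac{\alpha}{4}\|(|T|+|T^*|)^2\|+\beta\||T|^2\|=\alpha\|A\|^2+\beta\|T\|^2=(\alpha+\beta)\|T\|^2,$$
while on the other side, since $P\ge 0$,
$$\|P\|\ge\langle Px,x\rangle=\tfrac{\alpha}{4}\|(|T|+|T^*|)x\|^2+\beta\||T|x\|^2=\alpha\|Ax\|^2+\beta\|Bx\|^2=(\alpha+\beta)\|T\|^2.$$
Dividing by $\alpha+\beta>0$ gives the asserted identity for every admissible $(\alpha,\beta)$, including the degenerate cases $\alpha=0$ or $\beta=0$.

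For the converse, assume $T$ is compact, $\alpha\beta\neq 0$, and $\frac{1}{\alpha+\beta}\|P\|=\|T\|^2$. Comparing with the upper bound $\|P\|\le\alpha\|A\|^2+\beta\|T\|^2$ and using $\|A\|\le\|T\|$, the equality together with $\alpha>0$ forces $\|A\|=\|T\|$. Because $T$ is compact, so are $|T|$, $|T^*|$ and hence the positive operator $P$; therefore $\|P\|$ is attained, say $\langle Px,x\rangle=\|P\|=(\alpha+\beta)\|T\|^2$ for some unit vector $x$. Writing $\langle Px,x\rangle=\alpha\|Ax\|^2+\beta\||T|x\|^2$ and noting $\|Ax\|\le\|A\|=\|T\|$ and $\||T|x\|\le\|T\|$, the equality together with $\alpha,\beta>0$ forces $\|Ax\|=\|A\|$ and $\||T|x\|=\|T\|=\||T|\|$, i.e. $x\in M_A\cap M_B$.

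It then remains to show $x\in\ker C$, and this is where the only non-formal ingredient of the converse — the use of compactness above to produce the norm-attaining vector — pays off. From $x\in M_A$ we have $\|(|T|+|T^*|)x\|=2\|T\|$, so
$$4\|T\|^2=\||T|x\|^2+\||T^*|x\|^2+2\langle |T|x,|T^*|x\rangle\le\|T\|^2+\|T\|^2+2\|T\|^2,$$
using $\||T|x\|=\|T\|$, $\||T^*|x\|\le\||T^*|\|=\|T\|$, and Cauchy--Schwarz. Equality throughout forces $\||T^*|x\|=\|T\|$ and $\langle|T|x,|T^*|x\rangle=\||T|x\|\,\||T^*|x\|$, i.e. the equality case of Cauchy--Schwarz; since $|T|x$ and $|T^*|x$ have equal nonzero norm $\|T\|$, this yields $|T|x=|T^*|x$, that is $Cx=0$. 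Hence $x\in M_A\cap M_B\cap\ker C\neq\emptyset$, completing the proof. Everything apart from the compactness step is a chain of term-by-term comparisons and the equality conditions in Cauchy--Schwarz.
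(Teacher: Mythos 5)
Your proof is correct and follows essentially the same route as the paper: both directions rest on squeezing $\frac{1}{\alpha+\beta}\|\frac{\alpha}{4}(|T|+|T^*|)^2+\beta|T|^2\|$ between the triangle-inequality upper bound $\|T\|^2$ and the lower bound $\frac{1}{\alpha+\beta}\langle Px,x\rangle=\frac{\alpha}{\alpha+\beta}\|Ax\|^2+\frac{\beta}{\alpha+\beta}\|Bx\|^2$ at a well-chosen unit vector, with compactness invoked in the converse to produce a norm-attaining $x$ and $\alpha\beta\neq 0$ to force $x\in M_A\cap M_B$. The only point of divergence is the last step of the converse: the paper deduces $x\in\ker C$ by noting that a positive operator attaining its norm at $x$ must satisfy $Ax=\|A\|x$ and $Bx=\|B\|x$ (so $Ax=Bx$ directly), whereas you expand $\|(|T|+|T^*|)x\|^2$ and invoke the equality case of Cauchy--Schwarz to get $|T|x=|T^*|x$; both are sound, yours trading a small spectral fact for an elementary inner-product computation. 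A cosmetic quibble: in that expansion the cross term should be $2\,\mathrm{Re}\langle|T|x,|T^*|x\rangle$, though this does not affect the argument.
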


\begin{proof}
Let $x\in M_A~~\cap~~ M_B~~\cap~~ \ker C$. Then for all $\alpha,\beta$, we get
\begin{eqnarray*}
\frac{1}{\alpha+\beta}\| \alpha A^2+\beta B^2\|&\geq& \frac{1}{\alpha+\beta}\| (\alpha A^2+\beta B^2)x\|\\
&\geq& \frac{1}{\alpha+\beta}\left\langle (\alpha A^2+\beta B^2)x,x\right\rangle \\
&=& \frac{1}{\alpha+\beta}\left(\alpha \|Ax\|^2+\beta \|Bx\|^2\right)  \\      
&=& \|~~|T|x~~\|^2\\
&=& \|~~|T|~~\|^2\\
&=& \|T\|^2.
\end{eqnarray*}
Also, $\frac{1}{\alpha+\beta}\| \alpha A^2+\beta B^2\|\leq \|T\|^2$ for all $\alpha,\beta.$
Therefore, $\frac{1}{\alpha+\beta}\| \alpha A^2+\beta B^2\|= \|T\|^2$ for all $\alpha,\beta.$ 
Thus, $$\frac{1}{\alpha+\beta} \left \| \frac{\alpha}{4} \left ( |T|+|T^*| \right)^2+\beta |T|^2 \right\|=\frac{1}{4}\left\| |T|+|T^*| \right\|^2=\|T\|^2,~~\mbox{for all}~~ \alpha, \beta.$$
Now we prove the converse part.  Let $T$ be compact and  $\frac{1}{\alpha+\beta} \left \| \frac{\alpha}{4} \left ( |T|+|T^*| \right)^2+\beta |T|^2 \right\|$ $=\frac{1}{4}\left\| |T|+|T^*| \right\|^2=\|T\|^2,~~\mbox{for all}~~ \alpha\beta \neq 0.$ Let $x\in M_{\alpha A^2+\beta B^2}$. Then we have,
\begin{eqnarray*}
\|T\|^2=\frac{1}{\alpha+\beta}\| \alpha A^2+\beta B^2\|&=& \frac{1}{\alpha+\beta}\| (\alpha A^2+\beta B^2)x\|\\
&=& \frac{1}{\alpha+\beta}\langle (\alpha A^2+\beta B^2)x,x\rangle \\
&=& \frac{1}{\alpha+\beta}\left ( \alpha\langle  A^2x,x\rangle+\beta \langle B^2x,x\rangle \right )\\
&= & \frac{\alpha}{\alpha+\beta}\|Ax\|^2+\frac{\beta}{\alpha+\beta}\|Bx\|^2\\
&\leq & \frac{\alpha}{\alpha+\beta}\|A\|^2+\frac{\beta}{\alpha+\beta}\|B\|^2\\
&\leq& \|T\|^2.
\end{eqnarray*}
This implies that $\|Ax\|=\|A\|$ and $\|Bx\|=\|B\|.$ Also from above it follows that  $\|A\|=\|B\|=\|T\|$ and so $Ax=\|T\|x$, $Bx=\|T\|x$. Therefore, $Ax=Bx$, i.e., $x\in \ker C.$ Hence, $x\in M_A~~\cap~~ M_B~~\cap~~ \ker C$.
\end{proof}

\begin{remark} %and $\mathcal{H}$ is finite-dimensional
$(i) $ If $T\in \mathcal{B}(\mathcal{H})$ is a compact normal operator then it is easy to see  that $M_A~~\cap~~ M_B~~\cap~~ \ker C\neq \emptyset$, ($A,B,C$ are in Theorem \ref{eql2}), and so the inequalities in Theorem \ref{est4} (ii) becomes equalities, i.e.,
$$w^2(T)=\inf_{\alpha,\beta} \frac{1}{\alpha+\beta} \left \| \frac{\alpha}{4} \left ( |T|+|T^*| \right)^2+\beta |T|^2 \right\|=\frac{1}{4}\left\| |T|+|T^*| \right\|^2.$$
$(ii)$  We give an example of a matrix which is not normal but $M_A~~\cap~~ M_B~~\cap~~ \ker C\neq \emptyset,$ ($A,B,C$ are in Theorem \ref{eql2}). Consider $T=\left(\begin{array}{ccc}
0 & 2 & 0\\
0 & 0 & 2\\
0 & 0 & 0
\end{array}\right).$ Then it is easy to see that $\left(\begin{array}{c}
0 \\
1  \\
0 
\end{array}\right) \in M_A~~\cap~~ M_B~~\cap~~ \ker C$, but $2=\|T\| \neq w(T)=\sqrt{2}$, and so  $T$ is not normal.
\end{remark}

We next obtain the following lower bounds and upper bounds for the $ ( \alpha, \beta)$-norm, and apply it to the study of numerical radius inequalities.

\begin{theorem}\label{est5}
Let $T \in \mathcal{B}(\mathcal{H}).$ Then 
\begin{eqnarray*}
 (i)~~  \|T\|_{\alpha,\beta}^2 &\geq& \max \left \{\frac{1}{2} \left \|\frac{\alpha}{4} \left ( T^*T+TT^*   \right)+\beta T^*T \right\|,\frac{1}{3} \left \|\frac{\alpha}{2} \left ( T^*T+TT^* \right)+\beta T^*T \right\| \right \},\\  
 (ii)~~  \|T\|_{\alpha,\beta}^2 &\leq& \left \| \frac{\alpha}{2} \left (  T^*T+TT^*\right) +\beta T^*T \right\|~~\textit{and}\\
(iii)~~ w^2(T) &\leq& \inf_{\alpha,\beta}\frac{1}{\alpha+\beta} \left \| \frac{\alpha}{2} \left ( T^*T+TT^*\right)+\beta T^*T \right\| 
\leq \frac{1}{2} \|T^*T+TT^*\|.
\end{eqnarray*}
\end{theorem}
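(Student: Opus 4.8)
The key elementary inequality driving everything is the standard mixed Cauchy–Schwarz / Buzano-type estimate
\[
|\langle Tx,x\rangle|^2 \leq \tfrac12\langle (T^*T+TT^*)x,x\rangle
\]
valid for every unit vector $x$ (this comes from $|\langle Tx,x\rangle|^2 = |\langle Tx,x\rangle\langle x,Tx\rangle|$ together with $|\langle Tx,u\rangle\langle u,x\rangle|\le \tfrac12(\|Tx\|\|x\| + |\langle Tx,x\rangle|)\cdot(\dots)$; more cleanly, $2|\langle Tx,x\rangle|^2 \le \|Tx\|^2 + |\langle T^2x,x\rangle| \le \|Tx\|^2 + \|T^*x\|\|Tx\|$ is \emph{not} quite it — the form I actually need is the classical $|\langle Tx,x\rangle|^2\le \tfrac12(\|Tx\|^2+\|T^*x\|^2)$, which is immediate from $|\langle Tx,x\rangle|\le \|Tx\|$ and $|\langle Tx,x\rangle|=|\langle x,T^*x\rangle|\le\|T^*x\|$ combined via the AM–GM step $ab\le\tfrac12(a^2+b^2)$). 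So for $(ii)$ I would write, for $\|x\|=1$,
\[
\alpha|\langle Tx,x\rangle|^2+\beta\|Tx\|^2 \le \tfrac{\alpha}{2}\big(\|Tx\|^2+\|T^*x\|^2\big)+\beta\|Tx\|^2 = \Big\langle\Big(\tfrac{\alpha}{2}(T^*T+TT^*)+\beta T^*T\Big)x,x\Big\rangle,
\]
and take the supremum over unit vectors, using that the operator on the right is positive so its numerical radius equals its norm. This gives $(ii)$ directly.

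For $(iii)$, combine $(ii)$ with Theorem~\ref{norm}, namely $\sqrt{\alpha+\beta}\,w(T)\le\|T\|_{\alpha,\beta}$: squaring gives $(\alpha+\beta)w^2(T)\le\|T\|_{\alpha,\beta}^2\le\|\tfrac{\alpha}{2}(T^*T+TT^*)+\beta T^*T\|$, hence $w^2(T)\le\frac{1}{\alpha+\beta}\|\tfrac{\alpha}{2}(T^*T+TT^*)+\beta T^*T\|$ for every admissible $(\alpha,\beta)$; taking the infimum yields the middle quantity, and specializing $\alpha=1,\beta=0$ (or $\alpha=0,\beta=1$) recovers $\tfrac12\|T^*T+TT^*\|$ as an upper bound for the infimum, giving the last inequality.

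For the lower bound $(i)$ the strategy is different: I would bound $\|T\|_{\alpha,\beta}^2$ from \emph{below} by averaging $\alpha|\langle Tx,x\rangle|^2+\beta\|Tx\|^2$ over a rotation $x\mapsto$ test vectors, or more concretely use the known lower estimate $w^2(S)\ge\tfrac12\|S\|$-type bounds. The cleaner route: for any unit $x$ and any $\theta$, $\|T\|_{\alpha,\beta}^2 \ge \alpha|\langle e^{i\theta}Tx,x\rangle|^2 + \beta\|Tx\|^2$ since $\|e^{i\theta}T\|_{\alpha,\beta}=\|T\|_{\alpha,\beta}$; one exploits that $\sup_\theta|\langle e^{i\theta}Tx,x\rangle|^2$ interacts with $\|Re(e^{i\theta}T)\|$. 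Alternatively, and I think this is what the authors do, one uses that for a \emph{positive} operator $P=\tfrac{\alpha}{2}(T^*T+TT^*)+\beta T^*T$ (resp. with $\tfrac{\alpha}{4}$), $\|T\|_{\alpha,\beta}^2 \ge \sup_x \langle Px,x\rangle \cdot (\text{some fraction})$ by showing $\alpha|\langle Tx,x\rangle|^2+\beta\|Tx\|^2 \ge c\,\langle Px,x\rangle$ pointwise for $c=\tfrac12$ or $\tfrac13$ — the factor $\tfrac12$ coming from $|\langle Tx,x\rangle|^2 + \|T^*x\|^2 \ge$ something, and more plausibly from the inequality $\|Tx\|^2 \ge 2|\langle Tx,x\rangle|^2 - \|T^*x\|^2$ rearranged, or from $w^2(T)\ge\tfrac14\|T^*T+TT^*\|$ (Kittaneh's lower bound) lifted to the $(\alpha,\beta)$-setting. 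The main obstacle is pinning down the correct pointwise inequality that produces exactly the constants $\tfrac12$ and $\tfrac13$ with the two different coefficient patterns ($\tfrac{\alpha}{4}$ vs $\tfrac{\alpha}{2}$); I expect it follows from combining $\|T\|_{\alpha,\beta}^2 \ge \alpha w^2(T)+\beta c(T^*T)$-type bounds from Theorem~\ref{est2a} together with the classical $w^2(T)\ge\tfrac14\|T^*T+TT^*\|$ and $\|T\|^2=\|T^*T\|\ge\|T^*T+TT^*\|/2$, but the bookkeeping to extract the stated constants cleanly is the delicate part.
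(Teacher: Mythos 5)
Your parts (ii) and (iii) are correct and essentially the paper's argument: the pointwise bound $|\langle Tx,x\rangle|^2\le\frac12(\|Tx\|^2+\|T^*x\|^2)$ followed by the identity $\frac12(T^*T+TT^*)=(Re(T))^2+(Im(T))^2$ (the paper phrases it through the Cartesian decomposition, $|\langle Tx,x\rangle|^2=\langle Re(T)x,x\rangle^2+\langle Im(T)x,x\rangle^2\le\|Re(T)x\|^2+\|Im(T)x\|^2$, but this is the same estimate), then the supremum over unit vectors and Theorem~\ref{norm} for (iii). No issues there.

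Part (i), however, is a genuine gap: you do not give a proof, and none of the routes you sketch (averaging over rotations, lifting Kittaneh's lower bound, combining Theorem~\ref{est2a} with $\|T^*T\|\ge\frac12\|T^*T+TT^*\|$) is the mechanism that actually produces the constants $\frac12$ and $\frac13$. The missing idea is again the Cartesian decomposition, but used in the \emph{lower}-bound direction together with self-adjointness. For a unit vector $x$ one has
\begin{equation*}
|\langle Tx,x\rangle|^2=\langle Re(T)x,x\rangle^2+\langle Im(T)x,x\rangle^2\ \ge\ \tfrac12\big(|\langle Re(T)x,x\rangle|+|\langle Im(T)x,x\rangle|\big)^2\ \ge\ \tfrac12\,\big|\langle (Re(T)\pm Im(T))x,x\rangle\big|^2 .
\end{equation*}
Since $S_\pm:=Re(T)\pm Im(T)$ is self-adjoint, $\sup_{\|x\|=1}|\langle S_\pm x,x\rangle|^2=\|S_\pm\|^2=\|S_\pm^2\|=\sup_{\|x\|=1}\langle S_\pm^2x,x\rangle$. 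Hence each of the three quantities $\sup_x\langle\frac{\alpha}{2}S_+^2x,x\rangle$, $\sup_x\langle\frac{\alpha}{2}S_-^2x,x\rangle$ and $\sup_x\langle\beta T^*Tx,x\rangle$ is dominated by $\|T\|_{\alpha,\beta}^2$, and by subadditivity of the supremum their sum dominates $\sup_x\langle(\frac{\alpha}{2}(S_+^2+S_-^2)+\beta T^*T)x,x\rangle$. Using $S_+^2+S_-^2=T^*T+TT^*$, summing all three gives $3\|T\|_{\alpha,\beta}^2\ge\|\frac{\alpha}{2}(T^*T+TT^*)+\beta T^*T\|$, i.e.\ the $\frac13$ bound; averaging the two sign choices first (so that $\sup_x\alpha|\langle Tx,x\rangle|^2\ge\sup_x\langle\frac{\alpha}{4}(S_+^2+S_-^2)x,x\rangle$ costs only one copy of $\|T\|_{\alpha,\beta}^2$) and adding the $\beta T^*T$ term gives $2\|T\|_{\alpha,\beta}^2\ge\|\frac{\alpha}{4}(T^*T+TT^*)+\beta T^*T\|$, i.e.\ the $\frac12$ bound. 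This is precisely where the two different coefficient patterns ($\frac{\alpha}{4}$ with constant $\frac12$ versus $\frac{\alpha}{2}$ with constant $\frac13$) come from, and without this step your proposal does not establish (i).
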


\begin{proof} $ (i) .$
Let $x$ be a unit vector in $\mathcal{H} $. Then, 
\begin{eqnarray*}
|\langle Tx,x\rangle|^2 &=&  \left ( \langle Re(T)x,x  \rangle ^2+ \langle Im(T)x,x  \rangle^2   \right)  \\
 &\geq& \frac{1}{2}\left( |\langle Re(T)x,x  \rangle| +|\langle Im(T)x,x  \rangle|   \right)^2\\
 &\geq& \frac{1}{2} \left | \langle (Re(T)\pm Im(T) )x,x \rangle \right |^2.
\end{eqnarray*}
Therefore, taking supremum over all unit vectors in $\mathcal{H},$ we get
\begin{eqnarray*}
\sup_{\|x\|=1}\alpha |\langle Tx,x\rangle|^2 &\geq& \frac{\alpha}{2} \sup_{\|x\|=1}\left | \langle (Re(T)\pm Im(T) )x,x \rangle \right |^2\\
&=& \sup_{\|x\|=1} \left \langle \frac{\alpha}{2}(Re(T)\pm Im(T))^2x,x \right \rangle. \\
& \geq & \sup_{\|x\|=1} \left \langle \frac{\alpha}{4} \Big (\left ((Re(T)+ Im(T))^2+((Re(T)- Im(T))^2   \right)\Big)x,x\right \rangle 
\end{eqnarray*}
Also $ \|T\|_{\alpha\,\beta}^2 \geq \sup_{\|x\|=1} \langle \beta T^*Tx,x \rangle .$ %Clearly, $2((Re(T))^2+(Im(T))^2)=T^*T+TT^*.$
Therefore, 
\begin{eqnarray*}
2\|T\|_{\alpha\,\beta}^2 &\geq& \sup_{\|x\|=1} \left \langle \left (\frac{\alpha}{4}\left ((Re(T)+ Im(T))^2+((Re(T)- Im(T))^2   \right)+\beta T^*T\right)x,x\right \rangle \\
\Rightarrow \|T\|_{\alpha\,\beta}^2 &\geq& \frac{1}{2} \left \|\frac{\alpha}{2} \left (  (Re(T))^2+ (Im(T))^2  \right)+\beta T^*T \right\|\\
 \Rightarrow \|T\|_{\alpha\,\beta}^2 &\geq&  \frac{1}{2} \left \|\frac{\alpha}{4} \left ( T^*T+TT^*   \right)+\beta T^*T \right\|.............................(1)
\end{eqnarray*}

 Also, we have
\begin{eqnarray*}
3\|T\|_{\alpha\,\beta}^2 &\geq& \sup_{\|x\|=1} \left \langle \left (\frac{\alpha}{2}\left ((Re(T)+ Im(T))^2+((Re(T)- Im(T))^2   \right)+\beta T^*T\right)x,x\right \rangle \\
\Rightarrow \|T\|_{\alpha\,\beta}^2 &\geq& \frac{1}{3} \left \|\alpha \left (  (Re(T))^2+ (Im(T))^2  \right)+\beta T^*T \right\|\\
 \Rightarrow \|T\|_{\alpha\,\beta}^2 &\geq&  \frac{1}{3} \left \|\frac{\alpha}{2} \left ( T^*T+TT^* \right)+\beta T^*T \right\|.............................(2)
\end{eqnarray*}
Combining $(1) $ and $(2)$, we get the  inequality $(i)$.  Next we prove the second inequality. We have,
\begin{eqnarray*}
\alpha |\langle Tx,x\rangle|^2 +\beta \|Tx\|^2 &=& \alpha \left ( \langle Re(T)x,x  \rangle ^2+ \langle Im(T)x,x  \rangle^2   \right) +\beta \langle T^*Tx,x \rangle \\
&\leq& \alpha \left (\|Re(T)x\|^2 +\|Im(T)x\|^2 \right )+\beta \langle T^*Tx,x \rangle \\
&=& \alpha \left (\left \langle (Re(T))^2x,x   \right \rangle+\left \langle  (Im(T))^2x,x \right \rangle \right)+\beta \langle T^*Tx,x \rangle \\
&=& \left \langle \left (\alpha \left ( (Re(T))^2+(Im(T))^2 \right) +\beta T^*T  \right)x,x  \right \rangle \\
&=&  \left \langle \left (\frac{\alpha}{2}  ( T^*T+TT^*) +  \beta T^*T \right)x,x  \right \rangle. 
\end{eqnarray*}
Therefore, taking supremum over all unit vectors in $\mathcal{H},$ we get the inequality (ii). It follows from the inequality in (ii), by using Theorem \ref{norm} that 
\begin{eqnarray*}
w^2(T) &\leq& \frac{1}{\alpha+\beta} \left \| \frac{\alpha}{2} \left ( T^*T+TT^*\right)+\beta T^*T \right\|.
 \end{eqnarray*}
Taking infimum over all $\alpha,\beta$ we get the inequality
$$ w^2(T) \leq \inf_{\alpha,\beta}\frac{1}{\alpha+\beta} \left \| \frac{\alpha}{2} \left ( T^*T+TT^*\right)+\beta T^*T \right\|.$$ 
The remaining inequality follows from the case $ \alpha =1, \beta = 0 .$ 
\end{proof}

We would like to note that the first inequality in Theorem \ref{est5}(iii) improves on the existing inequality in \cite[Th. 1]{K05}, namely,  $$w^2(T)\leq \frac{1}{2} \|T^*T+TT^*\|.$$
Following the ideas involved in Theorem \ref{less2} and Theorem \ref{eql2}, we can prove the following two theorems which provide conditions under which our bound is strictly sharper or equal to the existing bound.

\begin{theorem}\label{less1}
	Let $T\in \mathcal{B}(\mathcal{H}).$   Suppose there exists $\alpha,\beta$ with $\alpha\beta\neq 0$ such that  $ M_{\alpha A+\beta B}\neq \emptyset,$ where $A=\frac{1}{2}(T^*T+TT^*)$ and $B=T^*T.$  If $\left \| B\right\|_L<\|A\|$, where $L=\mbox{span} \{M_{\alpha A+\beta B}\}$,   then $$w^2(T) \leq \frac{1}{\alpha +\beta} \left \| \frac{\alpha}{2} \left ( T^*T+TT^*\right)+\beta T^*T \right\|<\frac{1}{2}\| T^*T+TT^* \|.$$
\end{theorem}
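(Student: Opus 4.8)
The plan is to deduce this directly from Theorem~\ref{est5}(iii), mimicking the proof of Theorem~\ref{less2}(i). Write $A=\frac12(T^*T+TT^*)$ and $B=T^*T$, both of which are positive operators, so that $\alpha A+\beta B\ge 0$ and $\frac{\alpha}{2}(T^*T+TT^*)+\beta T^*T=\alpha A+\beta B$. By Theorem~\ref{est5}(iii), for the given pair $\alpha,\beta$ we already have $w^2(T)\le\frac{1}{\alpha+\beta}\|\alpha A+\beta B\|$, so it suffices to prove the strict inequality $\frac{1}{\alpha+\beta}\|\alpha A+\beta B\|<\|A\|=\frac12\|T^*T+TT^*\|$.

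To this end I would choose a unit vector $x\in M_{\alpha A+\beta B}$, which is possible by hypothesis; since $\alpha A+\beta B$ is positive, its norm is attained at $x$, i.e. $\|\alpha A+\beta B\|=\|(\alpha A+\beta B)x\|$. The triangle inequality gives $\|(\alpha A+\beta B)x\|\le\alpha\|Ax\|+\beta\|Bx\|$. Here $\|Ax\|\le\|A\|$ trivially, while $x\in L=\mathrm{span}\{M_{\alpha A+\beta B}\}$ with $\|x\|=1$ forces $\|Bx\|\le\|B\|_L<\|A\|$ by the assumption. Because $\alpha\beta\neq 0$ together with $\alpha,\beta\ge 0$ gives $\alpha,\beta>0$, adding the strict estimate $\beta\|Bx\|<\beta\|A\|$ to $\alpha\|Ax\|\le\alpha\|A\|$ yields $\|\alpha A+\beta B\|<(\alpha+\beta)\|A\|$; dividing by $\alpha+\beta$ and chaining with the bound from Theorem~\ref{est5}(iii) completes the argument.

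This is a routine adaptation of Theorem~\ref{less2}(i) and I do not anticipate a real obstacle. The two points that need care are that the operator norm of the positive operator $\alpha A+\beta B$ is genuinely attained at the selected unit vector (which is precisely why the hypothesis $M_{\alpha A+\beta B}\neq\emptyset$ is imposed, so that the triangle-inequality split is legitimate), and that the strictness is correctly bookkept --- it originates solely from the term $\beta\|Bx\|$ and hence requires $\beta>0$, which is guaranteed by $\alpha\beta\neq 0$.
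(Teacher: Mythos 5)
Your proposal is correct and is essentially the paper's own argument: the paper omits the proof, remarking that it follows the ideas of Theorem~\ref{less2}, and your proof is exactly that adaptation --- combine Theorem~\ref{est5}(iii) with the norm-attainment/triangle-inequality split at a unit vector $x\in M_{\alpha A+\beta B}$, using $\|Bx\|\le\|B\|_L<\|A\|$ and $\beta>0$ for strictness. The only cosmetic remark is that $\|\alpha A+\beta B\|=\|(\alpha A+\beta B)x\|$ holds by the very definition of $M_{\alpha A+\beta B}$, so the appeal to positivity there is unnecessary.
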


\begin{theorem}\label{eql1}
Let $T \in \mathcal{B}(\mathcal{H})$ and let $A=\frac{1}{2}(T^*T+TT^*)$, $B=T^*T$, $C=T^*T-TT^*$. If $M_A~~\cap~~ M_B~~\cap~~ \ker C\neq \emptyset$ then $$\frac{1}{\alpha+\beta} \left \| \frac{\alpha}{2} \left ( T^*T+TT^*\right)+\beta T^*T \right\|=\frac{1}{2}\| T^*T+TT^* \|=\|T\|^2,~~\mbox{for all}~~ \alpha, \beta.$$  Moreover, the converse is also true if $T$ is compact and $\alpha\beta\neq 0$. 
\end{theorem}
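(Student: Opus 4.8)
The plan is to mimic the argument of Theorem \ref{eql2} verbatim, with $A^2, B^2$ there replaced by the operators $A = \frac12(T^*T+TT^*)$ and $B = T^*T$ here, which are already positive, so no squaring is needed. For the forward direction, I would start from a unit vector $x \in M_A \cap M_B \cap \ker C$. Since $C = T^*T - TT^* = 0$ on $x$, we have $TT^*x = T^*Tx$, hence $Ax = \frac12(T^*Tx + TT^*x) = T^*Tx = Bx$; also $x \in M_A \cap M_B$ forces $\|A\| = \|Ax\| \ge \langle Ax,x\rangle$ and similarly for $B$. The key numerical facts are $\|A\| = \|\frac12(T^*T+TT^*)\| = \|T\|^2$ (since $\frac12\|T^*T+TT^*\| \ge \frac12\langle (T^*T+TT^*)y,y\rangle$ attains $\|T\|^2$ along a norming sequence for $\|T\|$, using $TT^* \ge 0$ and the reverse bound $\frac12\|T^*T+TT^*\| \le \frac12(\|T\|^2+\|T\|^2)$) and $\|B\| = \|T^*T\| = \|T\|^2$. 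Then, exactly as in Theorem \ref{eql2},
\begin{eqnarray*}
\frac{1}{\alpha+\beta}\|\alpha A + \beta B\| &\ge& \frac{1}{\alpha+\beta}\langle(\alpha A + \beta B)x,x\rangle = \frac{\alpha \langle Ax,x\rangle + \beta\langle Bx,x\rangle}{\alpha+\beta}\\
&=& \frac{\alpha\|T\|^2 + \beta\|T\|^2}{\alpha+\beta} = \|T\|^2,
\end{eqnarray*}
while the reverse inequality $\frac{1}{\alpha+\beta}\|\alpha A + \beta B\| \le \frac{1}{\alpha+\beta}(\alpha\|A\| + \beta\|B\|) = \|T\|^2$ holds for all $\alpha,\beta$ by the triangle inequality; hence equality throughout, which is precisely the asserted identity since $\frac12\|T^*T+TT^*\| = \|A\| = \|T\|^2$.

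For the converse, assume $T$ is compact, $\alpha\beta \neq 0$, and $\frac{1}{\alpha+\beta}\|\alpha A + \beta B\| = \|T\|^2$. Compactness of $T$ makes $A$ and $B$ compact positive operators, so $\alpha A + \beta B$ is a compact positive operator and its norm is attained: pick $x \in M_{\alpha A + \beta B}$. Running the chain
\begin{eqnarray*}
\|T\|^2 = \frac{1}{\alpha+\beta}\|\alpha A + \beta B\| &=& \frac{1}{\alpha+\beta}\langle(\alpha A+\beta B)x,x\rangle\\
&=& \frac{\alpha\langle Ax,x\rangle + \beta\langle Bx,x\rangle}{\alpha+\beta} \le \frac{\alpha\|A\| + \beta\|B\|}{\alpha+\beta} \le \|T\|^2,
\end{eqnarray*}
forces $\langle Ax,x\rangle = \|A\| = \|T\|^2$ and $\langle Bx,x\rangle = \|B\| = \|T\|^2$ (here $\alpha\beta\neq 0$ is used to conclude both equalities, not just a convex combination). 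Since $A, B$ are positive with $\|A\| = \|B\| = \|T\|^2$ and $\langle Ax,x\rangle = \|A\|\|x\|^2$, the equality case of Cauchy–Schwarz for the positive operator $\|A\|I - A$ gives $Ax = \|T\|^2 x$, so $x \in M_A$; likewise $Bx = \|T\|^2 x$, so $x \in M_B$. Then $Cx = T^*Tx - TT^*x = (2B - 2A)x = 2\|T\|^2 x - 2\|T\|^2 x = 0$, i.e. $x \in \ker C$, giving $x \in M_A \cap M_B \cap \ker C \neq \emptyset$.

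The only genuinely delicate point is the auxiliary identity $\|A\| = \frac12\|T^*T+TT^*\| = \|T\|^2$, which should be isolated as a short observation: the upper bound is the triangle inequality $\frac12\|T^*T+TT^*\| \le \frac12(\|T^*T\| + \|TT^*\|) = \|T\|^2$, and the lower bound follows by choosing unit vectors $y_n$ with $\|Ty_n\| \to \|T\|$, so that $\langle(T^*T+TT^*)y_n, y_n\rangle = \|Ty_n\|^2 + \|T^*y_n\|^2 \ge \|Ty_n\|^2 \to \|T\|^2$. Everything else is a line-by-line transcription of the proof of Theorem \ref{eql2}, and I would present it as such, flagging only where positivity of $A, B$ lets us skip the squaring step used there.
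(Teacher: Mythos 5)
Your overall strategy --- transcribing the proof of Theorem \ref{eql2} with the squaring step removed --- is exactly what the paper intends (it gives no separate proof of Theorem \ref{eql1}, remarking only that it follows the ideas of Theorems \ref{less2} and \ref{eql2}), and your converse direction is sound. The genuine problem is the ``auxiliary identity'' that you isolate at the end and then feed into the forward direction: the claim that $\|A\|=\frac{1}{2}\|T^*T+TT^*\|=\|T\|^2$ holds for every $T$ is false. Your lower-bound argument shows $\langle (T^*T+TT^*)y_n,y_n\rangle\ge\|Ty_n\|^2\to\|T\|^2$, which gives $\|T^*T+TT^*\|\ge\|T\|^2$ and hence only $\|A\|\ge\frac{1}{2}\|T\|^2$; a factor $\frac{1}{2}$ has been dropped. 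In general one has only $\frac{1}{2}\|T\|^2\le\frac{1}{2}\|T^*T+TT^*\|\le\|T\|^2$, and the left end is attained: for $T=\left(\begin{smallmatrix}0&1\\0&0\end{smallmatrix}\right)$ one has $T^*T+TT^*=I$, so $\frac{1}{2}\|T^*T+TT^*\|=\frac{1}{2}<1=\|T\|^2$. Indeed $\frac{1}{2}\|T^*T+TT^*\|=\|T\|^2$ is part of the \emph{conclusion} of the theorem (it is the $\beta=0$ instance of the first displayed equality), so it cannot be invoked as a free-standing fact to prove the forward implication.

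The repair is local. For a unit vector $x\in M_A\cap M_B\cap\ker C$: since $B=T^*T\ge0$ and $\|Bx\|=\|B\|$, the standard estimate $\|Bx\|^2=\langle B^2x,x\rangle\le\|B\|\langle Bx,x\rangle$ gives $\langle Bx,x\rangle=\|B\|=\|T\|^2$ (note that your line ``$\|A\|=\|Ax\|\ge\langle Ax,x\rangle$'' states the inequality in the unhelpful direction; this is the bound you actually need, and it is where positivity enters). Then $Cx=0$ gives $Ax=Bx$, so $\langle Ax,x\rangle=\langle Bx,x\rangle=\|T\|^2$, whence $\|A\|\ge\|T\|^2$; combined with the triangle-inequality bound $\|A\|\le\|T\|^2$ this yields $\|A\|=\|T\|^2$ as a consequence of the hypothesis, and your displayed chain then goes through verbatim. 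Your converse is unaffected, because there $\|A\|=\|T\|^2$ is forced by the assumed identity $\frac{1}{\alpha+\beta}\left(\alpha\|A\|+\beta\|B\|\right)=\|T\|^2$ together with $\alpha\neq0$ and $\|B\|=\|T\|^2$.
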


\begin{remark} 
	
$(i)$	 We give an example to show that the condition mentioned in Theorem \ref{less1} is not necessary.  Consider $T=\left(\begin{array}{ccc}
	0 & 0 & 0\\
	2 & 0 & 0\\
	0 & 1 & 0
	\end{array}\right).$ 
	Then it is easy to see that there exists $\alpha=6,\beta=1$ such that $ 4=\left \| B\right\|_L>\|A\|=\frac{5}{2}$ but 
	$$ w^2(T) \leq \frac{1}{\alpha+\beta} \left \| \frac{\alpha}{2} \left ( T^*T+TT^*\right)+\beta T^*T \right\|={\frac{16}{7}} \approx 2.2857<\frac{1}{2}\| T^*T+TT^* \|=2.5.$$

$(ii) $ If $T\in \mathcal{B}(\mathcal{H})$  is normaloid operator and $\mathcal{H}$ is finite-dimensional then it is easy to see  that $M_A~~\cap~~ M_B~~\cap~~ \ker C\neq \emptyset$ and so the inequalities in Theorem \ref{est5} (iii) becomes equalities, i.e.,
$$ w^2(T) = \inf_{\alpha,\beta}\frac{1}{\alpha+\beta} \left \| \frac{\alpha}{2} \left ( T^*T+TT^*\right)+\beta T^*T \right\| 
= \frac{1}{2} \|T^*T+TT^*\|.$$

$(iii)$  We give an example of a matrix which is not normaloid but $M_A~~\cap~~ M_B~~\cap~~ \ker C\neq \emptyset.$ Consider $T=\left(\begin{array}{ccc}
0 & 2 & 0\\
0 & 0 & 2\\
0 & 0 & 0
\end{array}\right).$ Then it is easy to see that $\left(\begin{array}{c}
0 \\
1  \\
0 
\end{array}\right) \in M_A~~\cap~~ M_B~~\cap~~ \ker C$, but $2=\|T\| \neq w(T)=\sqrt{2}$, i.e., $T$ is not normaloid. 
\end{remark}

To prove the next result, we need the Buzano's inequality (see \cite{B}), which is a generalization of the Cauchy-Schwarz inequality. 

\begin{lemma}\label{lemma1}
Let $a,b,e\in \mathcal{H}$ with $\|e\|=1$. Then 
\[|\langle a,e\rangle ~\langle e,b\rangle|\leq \frac{1}{2}\left(\|a\|~\|b\|+|\langle a,b\rangle|\right).\]
\end{lemma}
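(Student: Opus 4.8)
The plan is to reduce Buzano's inequality to a statement about the rank-one orthogonal projection onto the line spanned by $e$. First I would dispose of the trivial cases $a = 0$ or $b = 0$, and otherwise introduce the operator $P \in \mathcal{B}(\mathcal{H})$ defined by $Px = \langle x, e\rangle e$, which is the orthogonal projection onto $\mathrm{span}\{e\}$ precisely because $\|e\| = 1$. The key algebraic observation is that
\[ \langle a, e\rangle\,\langle e, b\rangle = \langle \langle a, e\rangle e,\, b\rangle = \langle Pa,\, b\rangle, \]
so the whole inequality becomes a bound on $|\langle Pa, b\rangle|$.

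Next I would exploit the fact that $U := 2P - I$ is a self-adjoint unitary: indeed $U^* = U$ since $P^* = P$, and $U^2 = 4P^2 - 4P + I = I$ since $P^2 = P$; consequently $\|U\| = 1$. Writing $P = \tfrac12(I + U)$ gives the identity
\[ \langle Pa,\, b\rangle = \tfrac12\langle a, b\rangle + \tfrac12\langle Ua,\, b\rangle. \]
Applying the triangle inequality, then the Cauchy--Schwarz inequality to the second term, and finally $\|Ua\| \le \|U\|\,\|a\| = \|a\|$, yields
\[ |\langle Pa,\, b\rangle| \le \tfrac12|\langle a, b\rangle| + \tfrac12\|Ua\|\,\|b\| \le \tfrac12\big(\|a\|\,\|b\| + |\langle a, b\rangle|\big), \]
which is exactly the claimed estimate.

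There is no serious obstacle here; the only point requiring a little care is verifying that $P$ is a self-adjoint idempotent (so that $U = 2P - I$ has operator norm exactly one), which is where the normalization $\|e\| = 1$ is used essentially. An alternative, more hands-on route would be to decompose $a = \langle a, e\rangle e + a'$ and $b = \langle b, e\rangle e + b'$ with $a', b' \perp e$, reduce the claim to $|\alpha||\beta| \le \tfrac12\big(\|a\|\|b\| + |\alpha\overline{\beta} + \langle a', b'\rangle|\big)$ where $\alpha = \langle a,e\rangle$, $\beta = \langle b,e\rangle$, and close it using $\sqrt{(|\alpha|^2 + \|a'\|^2)(|\beta|^2 + \|b'\|^2)} \ge |\alpha||\beta| + \|a'\|\|b'\|$ together with $|\alpha\overline{\beta} + \langle a', b'\rangle| \ge |\alpha||\beta| - \|a'\|\|b'\|$; I would nonetheless prefer the projection argument for its brevity.
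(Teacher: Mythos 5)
Your argument is correct, but there is nothing in the paper to compare it against: Lemma \ref{lemma1} is Buzano's inequality, which the authors quote as a known result with a citation to Buzano's original article \cite{B} and do not prove. Your proof is a complete, self-contained (and in fact the standard modern) derivation. The reduction of $\langle a,e\rangle\,\langle e,b\rangle$ to $\langle Pa,b\rangle$ with $P$ the rank-one orthogonal projection onto $\mathrm{span}\{e\}$ is exact, the normalization $\|e\|=1$ is precisely what makes $P$ a self-adjoint idempotent so that $U=2P-I$ is a self-adjoint involution with $\|Ua\|=\|a\|$, and the split $P=\tfrac12(I+U)$ followed by the triangle and Cauchy--Schwarz inequalities delivers the stated bound with nothing missing. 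The elementary alternative you sketch also closes: writing $a=\alpha e+a'$, $b=\beta e+b'$ with $a',b'\perp e$, the two estimates $\|a\|\,\|b\|=\sqrt{(|\alpha|^2+\|a'\|^2)(|\beta|^2+\|b'\|^2)}\ge|\alpha|\,|\beta|+\|a'\|\,\|b'\|$ and $|\alpha\overline{\beta}+\langle a',b'\rangle|\ge|\alpha|\,|\beta|-\|a'\|\,\|b'\|$ (the latter holding trivially when its right-hand side is negative) sum to $2|\alpha|\,|\beta|$, which is what is needed. Either version would serve as a proof where the paper instead relies on the citation.
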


\begin{theorem}\label{theorem2}
Let $T\in \mathcal{B}({\mathcal{H}}).$ Then 
\begin{eqnarray*}
(i)~~\|T\|^2_{\alpha,\beta}&\leq& \frac{\alpha}{2}w(T^2)+\left \| \frac{\alpha}{4}(T^*T+TT^*)+\beta T^*T \right \|~~\textit{and}\\
(ii)~~w^2(T)  &\leq & \inf_{\alpha,\beta} \frac{1}{\alpha+\beta}\left \{ \frac{\alpha}{2}w(T^2)+\left \|\frac{\alpha}{4}(T^*T+ TT^*)+\beta T^*T \right \| \right\}\\
    &\leq& \frac{1}{2}w(T^2)+\frac{1}{4}\left \|T^*T+ TT^*\right \|.
\end{eqnarray*}
\end{theorem}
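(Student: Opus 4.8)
The plan is to prove part $(i)$ first and then derive $(ii)$ mechanically from it via Theorem \ref{norm} and the case $\alpha = 1, \beta = 0$. For part $(i)$, I would fix a unit vector $x \in \mathcal{H}$ and estimate $\alpha |\langle Tx, x\rangle|^2 + \beta \|Tx\|^2$ from above. The natural device for the $|\langle Tx, x\rangle|^2$ term is Buzano's inequality (Lemma \ref{lemma1}): taking $a = Tx$, $b = T^*x$, $e = x$ gives
\begin{eqnarray*}
|\langle Tx, x\rangle|^2 = |\langle Tx, x\rangle \langle x, T^*x\rangle| \leq \frac{1}{2}\left( \|Tx\|\,\|T^*x\| + |\langle Tx, T^*x\rangle| \right) = \frac{1}{2}\left( \|Tx\|\,\|T^*x\| + |\langle T^2 x, x\rangle| \right).
\end{eqnarray*}
Then $\|Tx\|\,\|T^*x\| \leq \frac{1}{2}(\|Tx\|^2 + \|T^*x\|^2) = \frac{1}{2}\langle (T^*T + TT^*)x, x\rangle$ by AM-GM, and $|\langle T^2 x, x\rangle| \leq w(T^2)$.

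Putting these together, $\alpha |\langle Tx, x\rangle|^2 \leq \frac{\alpha}{4}\langle (T^*T + TT^*)x, x\rangle + \frac{\alpha}{2} w(T^2)$. Adding $\beta \|Tx\|^2 = \beta \langle T^*T x, x\rangle$ yields
\begin{eqnarray*}
\alpha |\langle Tx, x\rangle|^2 + \beta \|Tx\|^2 \leq \frac{\alpha}{2}w(T^2) + \left\langle \left( \tfrac{\alpha}{4}(T^*T + TT^*) + \beta T^*T \right)x, x \right\rangle \leq \frac{\alpha}{2}w(T^2) + \left\| \tfrac{\alpha}{4}(T^*T + TT^*) + \beta T^*T \right\|.
\end{eqnarray*}
Taking the supremum over all unit $x$ gives $\|T\|_{\alpha,\beta}^2 \leq \frac{\alpha}{2}w(T^2) + \|\frac{\alpha}{4}(T^*T + TT^*) + \beta T^*T\|$, which is $(i)$.

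For part $(ii)$, I would invoke Theorem \ref{norm} in the form $w(T) \leq \frac{1}{\sqrt{\alpha+\beta}}\|T\|_{\alpha,\beta}$, square it, and substitute the bound from $(i)$ to get $w^2(T) \leq \frac{1}{\alpha+\beta}\{\frac{\alpha}{2}w(T^2) + \|\frac{\alpha}{4}(T^*T+TT^*) + \beta T^*T\|\}$ for every admissible pair $(\alpha, \beta)$; taking the infimum over $\alpha, \beta$ gives the first inequality of $(ii)$. The final inequality in $(ii)$ is the specialization $\alpha = 1, \beta = 0$, which recovers exactly $\frac{1}{2}w(T^2) + \frac{1}{4}\|T^*T + TT^*\|$.

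I do not anticipate a genuine obstacle here; the only point requiring a little care is the correct application of Buzano's inequality — one must write $|\langle Tx,x\rangle|^2$ as $|\langle Tx, x\rangle \langle x, T^*x\rangle|$ (using $\langle x, T^*x\rangle = \overline{\langle Tx, x\rangle}$) so that Lemma \ref{lemma1} applies with the right choice of $a, b, e$, and then recognize $\langle Tx, T^*x\rangle = \langle T^2 x, x\rangle$. Everything else is routine: AM-GM on the product $\|Tx\|\,\|T^*x\|$, the observation that $\langle (T^*T+TT^*)x,x\rangle$ is bounded by the operator norm of that positive operator, and the standard passage from a pointwise inequality to an operator-norm inequality by taking suprema.
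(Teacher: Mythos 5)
Your proof is correct and follows essentially the same route as the paper's: Buzano's inequality (Lemma \ref{lemma1}) with $a=Tx$, $b=T^*x$, $e=x$, followed by AM--GM on $\|Tx\|\,\|T^*x\|$, taking the supremum for $(i)$, and then Theorem \ref{norm} plus the specialization $\alpha=1,\beta=0$ for $(ii)$. No gaps; the care you note about identifying $|\langle Tx,x\rangle|^2$ with $|\langle Tx,x\rangle\langle x,T^*x\rangle|$ and $\langle Tx,T^*x\rangle$ with $\langle T^2x,x\rangle$ is exactly what the paper does implicitly.
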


\begin{proof}
Let $x\in \mathcal{H}$ with $\|x\|=1.$ Taking $a=Tx, b=T^*x$ and $e=x$ in Lemma \ref{lemma1} and using the arithmetic-geometric mean inequality,  we get,
\begin{eqnarray*}
|\langle Tx,x\rangle|^2&\leq& \frac{1}{2}(\|Tx\|~\|T^*x\|+|\langle T^2x,x\rangle|)\\
&=& \frac{1}{2}|\langle T^2x,x\rangle|+\frac{1}{2}\langle T^*Tx,x\rangle^{1/2}\langle TT^*x,x\rangle^{1/2}\\
&\leq& \frac{1}{2}|\langle T^2x,x\rangle|+\frac{1}{4}(\langle T^*Tx,x\rangle+\langle TT^*x,x\rangle)\\
&=& \frac{1}{2}|\langle T^2x,x\rangle|+\frac{1}{4} \langle (T^*T+ TT^*)x,x\rangle.
\end{eqnarray*}
Therefore, 
\begin{eqnarray*}
\alpha |\langle Tx,x\rangle|^2+\beta \|Tx\|^2 &\leq& \frac{\alpha}{2}|\langle T^2x,x\rangle|+ \left \langle \left (\frac{\alpha}{4}(T^*T+ TT^*)+\beta T^*T \right )x,x\right \rangle.\\
&\leq& \frac{\alpha}{2}w(T^2)+\left \|\frac{\alpha}{4}(T^*T+ TT^*)+\beta T^*T \right \|.
\end{eqnarray*}
Taking supremum over all unit vectors in $\mathcal{H}$, we get the desired inequality in (i). Using  Theorem \ref{norm} and taking infimum over $\alpha, \beta, $ we get the inequality  
$$ w^2(T)  \leq  \inf_{\alpha,\beta} \frac{1}{\alpha+\beta}\Big \{ \frac{\alpha}{2}w(T^2)+\left \|\frac{\alpha}{4}(T^*T+ TT^*)+\beta T^*T \right \| \Big\} .$$
The remaining inequality follows from the case $ \alpha =1, \beta = 0 .$ 
\end{proof}

We next state the  following theorem which  provide a condition on the operator $T$ for which the bound obtained by us  in  Theorem \ref{theorem2} (ii)  is sharper than the existing bound $ \frac{1}{2}w(T^2)+\frac{1}{4}\left \|T^*T+ TT^*\right \|.$ The proof is omitted as it follows using previous arguments.

\begin{theorem}\label{less3}
Let $T\in \mathcal{B}(\mathcal{H}).$ Suppose there exists $\alpha,\beta$ with $\alpha\beta\neq 0$ such that  $ M_{\alpha A+\beta B}\neq \emptyset,$ where $A=\frac{1}{4}(T^*T+TT^*)$ and $B=T^*T.$  If $\left \| B\right\|_L<\|A\|$, where $L=\mbox{span} \{M_{\alpha A+\beta B}\}$,   then 
$$ w^2(T)\leq \frac{1}{\alpha+\beta}\left \{ \frac{\alpha}{2}w(T^2)+\left \|\frac{\alpha}{4}(T^*T+ TT^*)+\beta T^*T \right \| \right\} < \frac{1}{2}w(T^2)+\frac{1}{4}\left \|T^*T+ TT^*\right \|.$$
\end{theorem}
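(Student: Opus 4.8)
The plan is to imitate the strategy already used in the proof of Theorem~\ref{less2}(i) and Theorem~\ref{less1}, replacing the relevant operator combination by the one appearing in Theorem~\ref{theorem2}(ii). Concretely, write $A=\tfrac14(T^*T+TT^*)$ and $B=T^*T$, and suppose $\alpha,\beta$ with $\alpha\beta\neq 0$ are such that $M_{\alpha A+\beta B}\neq\emptyset$ and $\|B\|_L<\|A\|$, where $L=\operatorname{span}\{M_{\alpha A+\beta B}\}$. Pick a unit vector $x\in M_{\alpha A+\beta B}$, so that $\|(\alpha A+\beta B)x\|=\|\alpha A+\beta B\|$.

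First I would handle the norm term. Starting from Theorem~\ref{theorem2}(ii),
\begin{eqnarray*}
w^2(T) & \leq & \frac{1}{\alpha+\beta}\Big\{\frac{\alpha}{2}w(T^2)+\|\alpha A+\beta B\|\Big\}\\
       & = & \frac{1}{\alpha+\beta}\Big\{\frac{\alpha}{2}w(T^2)+\|(\alpha A+\beta B)x\|\Big\}\\
       & \leq & \frac{1}{\alpha+\beta}\Big\{\frac{\alpha}{2}w(T^2)+\alpha\|Ax\|+\beta\|Bx\|\Big\}.
\end{eqnarray*}
Since $x\in L$, the hypothesis $\|B\|_L<\|A\|$ gives $\|Bx\|\le\|B\|_L<\|A\|$, while trivially $\|Ax\|\le\|A\|$. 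Hence the bracket is strictly less than $\frac{\alpha}{2}w(T^2)+(\alpha+\beta)\|A\|$, so
\[
w^2(T)<\frac{\alpha}{2(\alpha+\beta)}w(T^2)+\|A\|\le\frac12 w(T^2)+\frac14\|T^*T+TT^*\|,
\]
using $\alpha/(\alpha+\beta)\le 1$ and $\|A\|=\tfrac14\|T^*T+TT^*\|$. This is exactly the claimed strict inequality, since the middle quantity $\frac{1}{\alpha+\beta}\{\frac{\alpha}{2}w(T^2)+\|\frac{\alpha}{4}(T^*T+TT^*)+\beta T^*T\|\}$ is precisely $\frac{1}{\alpha+\beta}\{\frac{\alpha}{2}w(T^2)+\|\alpha A+\beta B\|\}$.

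The only subtlety — and the main thing to be careful about — is that the strictness must survive the two places where we relaxed inequalities: replacing $\|Bx\|$ by the strict bound $\|B\|_L<\|A\|$ is where strictness enters, and this is legitimate because $x$ lies in $L$; the subsequent step $\frac{\alpha}{2(\alpha+\beta)}w(T^2)\le\frac12 w(T^2)$ and $\|A\|\le\frac14\|T^*T+TT^*\|$ are only used with non-strict inequalities, so they do not destroy the strict gap already produced. I also need the attainment of $\|\alpha A+\beta B\|$ at the unit vector $x$, which is guaranteed by $M_{\alpha A+\beta B}\neq\emptyset$; no compactness is needed here, exactly as in Theorem~\ref{less2}(i) and Theorem~\ref{less1}. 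Thus the proof is a routine adaptation and, as the statement indicates, may be omitted.
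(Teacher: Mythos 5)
Your argument is correct and is exactly the adaptation the paper intends: the proof of Theorem~\ref{less3} is omitted there precisely because it follows the scheme of Theorem~\ref{less2}(i), namely evaluating $\|\alpha A+\beta B\|$ at a unit vector of $M_{\alpha A+\beta B}$, splitting by the triangle inequality, and using $\|Bx\|\le\|B\|_L<\|A\|$ together with $\beta\neq 0$ to obtain strictness. Your added care that the subsequent non-strict relaxations $\frac{\alpha}{2(\alpha+\beta)}w(T^2)\le\frac{1}{2}w(T^2)$ and $\|A\|=\frac{1}{4}\|T^*T+TT^*\|$ preserve the strict gap is exactly right, so nothing is missing.
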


\begin{remark}
We give an example to show that the condition mentioned in Theorem \ref{less3} is not necessary.  Consider $T=\left(\begin{array}{ccc}
	0 & 0 & 0\\
	2 & 0 & 0\\
	0 & 1 & 0
	\end{array}\right).$ 
	 Then it is easy to see that there exists $\alpha=12,\beta=1$ such that $ 4=\left \| B\right\|_L>\|A\|=\frac{5}{4}$ but
	\begin{eqnarray*}
	w^2(T) &\leq& \frac{1}{\alpha+\beta}\left \{ \frac{\alpha}{2}w(T^2)+\left \|\frac{\alpha}{4}(T^*T+ TT^*)+\beta T^*T \right \| \right\}=\frac{16.5}{13} \approx 1.26923077 \\
	&<&\frac{1}{2}w(T^2)+\frac{1}{4}\left \|T^*T+ TT^*\right \|=\frac{7}{4}=1.75.
\end{eqnarray*}
\end{remark}

Our penultimate result  is on the estimation of the  upper bound for the $(\alpha,\beta)$-norm of bounded linear operator $T$  in terms of  $ Re(T), Im(T)$. For this  we first need the following lemma.

\begin{lemma}\cite{K88}\label{lem9}
Let $T \in \mathcal{B}(\mathcal{H})$ be self-adjoint and let $x\in \mathcal{H}$.Then
\[|\langle Tx,x \rangle | \leq \langle |T|x,x \rangle .\]
\end{lemma}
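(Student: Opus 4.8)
The plan is to deduce the stated scalar inequality from the two operator inequalities $-|T| \le T \le |T|$, which themselves follow from the continuous functional calculus applied to the self-adjoint operator $T$.

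First I would recall that, since $T$ is self-adjoint, $\sigma(T) \subseteq \mathbb{R}$ and $|T| = (T^*T)^{1/2} = (T^2)^{1/2}$ coincides with $f_0(T)$ for the continuous function $f_0(t) = |t|$. The two functions $f_{\pm}(t) = |t| \pm t$ are non-negative on all of $\mathbb{R}$, hence in particular on $\sigma(T)$. By the positivity of the functional calculus (a continuous non-negative function of a self-adjoint operator yields a positive operator), both $|T| + T = f_{+}(T)$ and $|T| - T = f_{-}(T)$ are positive operators on $\mathcal{H}$.

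Next, evaluating the associated quadratic forms at an arbitrary $x \in \mathcal{H}$ gives $\langle (|T| + T)x, x\rangle \ge 0$ and $\langle (|T| - T)x, x\rangle \ge 0$, that is, $-\langle |T|x, x\rangle \le \langle Tx, x\rangle \le \langle |T|x, x\rangle$. Since $|T| \ge 0$ forces $\langle |T|x, x\rangle \ge 0$, this pair of inequalities is exactly $|\langle Tx, x\rangle| \le \langle |T|x, x\rangle$, which is the claim.

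An alternative route, which avoids invoking the functional calculus directly, is the Jordan decomposition $T = T_{+} - T_{-}$ of the self-adjoint operator $T$ into its orthogonal positive and negative parts, for which $|T| = T_{+} + T_{-}$ and $T_{+}, T_{-} \ge 0$; then $|\langle Tx, x\rangle| = |\langle T_{+}x, x\rangle - \langle T_{-}x, x\rangle| \le \langle T_{+}x, x\rangle + \langle T_{-}x, x\rangle = \langle |T|x, x\rangle$, both terms on the right being non-negative. In either approach the only non-trivial ingredient is the spectral theorem for self-adjoint operators (equivalently, the existence of the Jordan decomposition); the rest is a one-line manipulation, so I do not anticipate any real obstacle.
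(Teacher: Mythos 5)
Your argument is correct. The paper itself offers no proof of this lemma --- it is quoted directly from Kittaneh \cite{K88} --- so there is nothing to compare against, but both of your routes (positivity of $|T|\pm T$ via the functional calculus applied to $f_\pm(t)=|t|\pm t$, or the Jordan decomposition $T=T_+-T_-$ with $|T|=T_++T_-$) are the standard ways to establish it, and each is complete. The only step you use tacitly is that $\langle Tx,x\rangle$ is real because $T$ is self-adjoint, which is what lets the two-sided estimate $-\langle |T|x,x\rangle \le \langle Tx,x\rangle \le \langle |T|x,x\rangle$ be rewritten as the absolute-value inequality; it is worth one clause to say so explicitly, but it is not a gap.
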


\begin{theorem}\label{est5a}
Let $T \in \mathcal{B}(\mathcal{H}).$ Then
\begin{eqnarray*}
(i)~~\|T\|^2_{\alpha,\beta} &\leq& \left\| \alpha(|Re(T)|+|Im(T)|)^2 +\beta T^*T \right\|~~\textit{and}\\
(ii)~~w^2(T)  &\leq&  \inf_{\alpha,\beta} \frac{1}{\alpha+\beta} \left\| \alpha(|Re(T)|+|Im(T)|)^2 +\beta T^*T \right\|.
\end{eqnarray*}
\end{theorem}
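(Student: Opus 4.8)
The plan is to run the familiar ``fix a unit vector, estimate pointwise, then take supremum'' scheme, with the two key inputs being Lemma \ref{lem9} (applied to the self-adjoint operators $Re(T)$ and $Im(T)$) and Lemma \ref{lem5} (applied to the positive operator $|Re(T)|+|Im(T)|$ with $p=2$). First I would fix $x\in\mathcal{H}$ with $\|x\|=1$ and use the decomposition $\langle Tx,x\rangle=\langle Re(T)x,x\rangle+i\langle Im(T)x,x\rangle$, both summands being real, to write
\[
|\langle Tx,x\rangle|^2=\langle Re(T)x,x\rangle^2+\langle Im(T)x,x\rangle^2 .
\]

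Next, since $Re(T)$ and $Im(T)$ are self-adjoint, Lemma \ref{lem9} gives $|\langle Re(T)x,x\rangle|\le\langle |Re(T)|x,x\rangle$ and $|\langle Im(T)x,x\rangle|\le\langle |Im(T)|x,x\rangle$; squaring and adding, then bounding $a^2+b^2\le(a+b)^2$ for nonnegative $a,b$, yields
\[
|\langle Tx,x\rangle|^2\le\big(\langle |Re(T)|x,x\rangle+\langle |Im(T)|x,x\rangle\big)^2=\big\langle (|Re(T)|+|Im(T)|)x,x\big\rangle^2 .
\]
Since $|Re(T)|+|Im(T)|\ge 0$, Lemma \ref{lem5} with $p=2$ then gives $\big\langle (|Re(T)|+|Im(T)|)x,x\big\rangle^2\le\big\langle (|Re(T)|+|Im(T)|)^2x,x\big\rangle$.

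Combining these estimates with $\|Tx\|^2=\langle T^*Tx,x\rangle$, I get
\[
\alpha|\langle Tx,x\rangle|^2+\beta\|Tx\|^2\le\big\langle\big(\alpha(|Re(T)|+|Im(T)|)^2+\beta T^*T\big)x,x\big\rangle\le\big\|\alpha(|Re(T)|+|Im(T)|)^2+\beta T^*T\big\|,
\]
and taking the supremum over all unit vectors $x$ yields (i). For (ii), I would apply Theorem \ref{norm}, namely $w(T)\le\frac{1}{\sqrt{\alpha+\beta}}\|T\|_{\alpha,\beta}$, square it, insert the bound from (i), and finally take the infimum over all admissible $\alpha,\beta$. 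There is no genuine obstacle here; the only points requiring a moment's care are checking that Lemma \ref{lem9} is legitimately invoked on $Re(T)$ and $Im(T)$ (self-adjointness) and that $|Re(T)|+|Im(T)|$ is positive so that Lemma \ref{lem5} applies.
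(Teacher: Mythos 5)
Your proposal is correct and follows essentially the same route as the paper's proof: the same decomposition of $\langle Tx,x\rangle$ into real and imaginary parts, the same use of Lemma \ref{lem9} on the self-adjoint operators $Re(T)$ and $Im(T)$, the same application of Lemma \ref{lem5} with $p=2$ to the positive operator $|Re(T)|+|Im(T)|$, and the same passage to (ii) via Theorem \ref{norm}. The only cosmetic difference is that you expand $|\langle Tx,x\rangle|^2$ as a sum of two squares before bounding it by $(\,|\cdot|+|\cdot|\,)^2$, whereas the paper applies the triangle inequality to the complex modulus directly; these are the same estimate.
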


\begin{proof}
Let $x$ be a unit vector in $\mathcal{H} $. Then by using Lemma \ref{lem9} and Lemma \ref{lem5}, we get,
\begin{eqnarray*}
\alpha |\langle Tx,x \rangle|^2+\beta \|Tx\|^2 &=& \alpha \left | \langle Re(T)x,x \rangle +i \langle Im(T)x,x \rangle \right|^2 +\beta \langle T^*Tx,x \rangle \\
&\leq& \alpha \left ( |\langle Re(T)x,x \rangle| +|\langle Im(T)x,x \rangle | \right)^2 +\beta \langle T^*Tx,x \rangle \\
&\leq& \alpha \left ( \langle |Re(T)|x,x \rangle +\langle |Im(T)|x,x \rangle  \right)^2 +\beta \langle T^*Tx,x \rangle \\
&=& \alpha \left ( \langle (|Re(T)|+|Im(T)|)x,x \rangle \right)^2 +\beta \langle T^*Tx,x \rangle \\
&\leq&    \left\langle \left ( \alpha\left (|Re(T)|+|Im(T)|\right)^2+\beta T^*T\right)x,x \right \rangle  \\
&\leq& \left \| \alpha \left(|Re(T)|+|Im(T)|\right)^2+\beta T^*T \right \|.
\end{eqnarray*}
Therefore, taking supremum over all unit vectors in $\mathcal{H},$ we get the required inequality in (i). Using  Theorem \ref{norm} in (i) and taking infimum over $\alpha, \beta, $ we get the inequality in (ii), i.e.,  
$$ w^2(T)  \leq  \inf_{\alpha,\beta} \frac{1}{\alpha+\beta} \left\| \alpha(|Re(T)|+|Im(T)|)^2 +\beta T^*T \right\|.$$
\end{proof}

\begin{remark}  In particular, if we take $\alpha = 1,\beta = 0$ in Theorem \ref{est5a} (ii), then we get the following inequality 
	$$w(T) \leq \Big \| |Re(T)|+|Im(T)| \Big \|.$$
	The existing upper bound of numerical radius in  terms of $ Re(T)  $ and $ Im(T)$ is 
	$$w^2(T) \leq \|Re(T)\|^2+\|Im(T)\|^2.$$ We give an example to show that the upper bound obtained here is better than the existing one. 
	 Considering  the matrix $T=\left(\begin{array}{cc}
	1 & 0\\
	0 & i
	\end{array}\right),$  it is easy to see  that the inequality $w^2(T) \leq \|Re(T)\|^2+\|Im(T)\|^2$ gives $w(T)\leq \sqrt{2}$, whereas our inequality  gives $w(T) \leq 1.$ 
\end{remark} 

Before presenting the final result of this article, we would like to note that the introduction of the $ (\alpha, \beta)$-norm on $ \mathcal{B}(\mathcal{H}) $ has played a crucial role in improving the $\emph{upper bounds} $ of  the existing numerical radius inequalities. Therefore, a natural query in this context would be regarding the possible improvements of the $\emph{lower bounds} $ of  the existing numerical radius inequalities, using similar techniques. Our final result is dedicated to answering this query in a fruitful way. Given an arbitrary but fixed $ T \in \mathcal{B}(\mathcal{H}), $ let us consider an expression $f_{\alpha, \beta}(x)=\alpha|\langle Tx,x\rangle|^2+\frac{\beta}{4}\|(T^*T+TT^*)x\|$, where $x\in \mathcal{H}.$ It is quite straightforward to check that the above expression does not induce a norm on $\mathcal{B}(\mathcal{H}) $ by taking the supremum over all unit vectors. Nevertheless, we show in the following theorem that it is possible to  improve the existing lower bound for the numerical radius of bounded linear operators, obtained in \cite[Th. 1]{K05}, namely,  $w^2(T)\geq \frac{1}{4} \|T^*T+TT^*\|,$ by using the above expression.

\begin{theorem}\label{lower}
Let $T\in \mathcal{B}(\mathcal{H})$ and let $\mathcal{H}_0=\{x\in \mathcal{H}: \|(T^*T+TT^*)x\|=\|T^*T+TT^*\|\|x\|\}.$ Let $\mathcal{H}_0\neq \emptyset$ and let $T|_{\mathcal{H}_0}=T_0.$ Then 
\begin{eqnarray*}
w^2(T)&\geq &\sup_{\alpha,\beta} \left\{ \frac{\alpha}{\alpha+\beta} \sup_{\|x\|=1} | \langle T_0x,x\rangle |^2+\frac{\beta}{4(\alpha+\beta)}\left \|T^*T+TT^* \right \| \right \}\\
&\geq& \frac{1}{4} \left \|T^*T+TT^* \right \|.
\end{eqnarray*}
\end{theorem}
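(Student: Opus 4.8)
The plan is to reduce the whole statement to a weighted combination of two estimates valid at a single unit vector $x$ lying in $\mathcal{H}_0$: the trivial inequality $w^2(T)\geq |\langle Tx,x\rangle|^2$, and the pointwise form of Kittaneh's lower bound $w^2(T)\geq \frac14\langle (T^*T+TT^*)x,x\rangle$. The role of $\mathcal{H}_0$ is precisely to make the second quantity constant. So I would first record the elementary fact that for a positive operator $S$ and a unit vector $x$, the equality $\|Sx\|=\|S\|$ already forces $\langle Sx,x\rangle=\|S\|$: since $S^2\leq \|S\|\,S$, one has $\|S\|^2=\|Sx\|^2=\langle S^2x,x\rangle\leq\|S\|\langle Sx,x\rangle\leq\|S\|^2$, so equality holds throughout. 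Applying this with $S=T^*T+TT^*$ shows that $\langle (T^*T+TT^*)x,x\rangle=\|T^*T+TT^*\|$ for every unit vector $x\in\mathcal{H}_0$, while $\langle T_0x,x\rangle=\langle Tx,x\rangle$ on $\mathcal{H}_0$ simply because $T_0$ is the restriction of $T$.

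Next I would prove the pointwise lower bound for $w^2(T)$. Starting from $w(T)=\sup_{\theta\in\mathbb{R}}\|Re(e^{i\theta}T)\|$ and using that for a self-adjoint operator $A$ one has $\|A\|^2=\|A^2\|\geq\langle A^2x,x\rangle$ for unit $x$, we get $w^2(T)\geq\langle Re(e^{i\theta}T)^2x,x\rangle$ for every real $\theta$. Writing $Re(e^{i\theta}T)=\cos\theta\,Re(T)-\sin\theta\,Im(T)$, squaring, and averaging over $\theta\in[0,2\pi]$ annihilates the cross term and leaves $w^2(T)\geq\frac12\langle(Re(T)^2+Im(T)^2)x,x\rangle=\frac14\langle(T^*T+TT^*)x,x\rangle$ (one may equally invoke \cite[Th.~1]{K05} directly). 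Combining this with $w^2(T)\geq|\langle Tx,x\rangle|^2$ and the previous paragraph, for every unit $x\in\mathcal{H}_0$ and every pair $(\alpha,\beta)$ we obtain
\[
(\alpha+\beta)\,w^2(T)=\alpha\,w^2(T)+\beta\,w^2(T)\geq \alpha|\langle Tx,x\rangle|^2+\frac{\beta}{4}\|T^*T+TT^*\|.
\]

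It then remains to divide by $\alpha+\beta$, take the supremum over all unit vectors $x\in\mathcal{H}_0$ (which converts $|\langle Tx,x\rangle|^2$ into $\sup_{\|x\|=1}|\langle T_0x,x\rangle|^2$ while the constant second term is untouched), and finally take the supremum over $\alpha,\beta$; this gives the first displayed inequality of the theorem. The second inequality is immediate on choosing $\alpha=0,\beta=1$ inside the supremum. The only point requiring care is the positivity argument identifying $\langle Sx,x\rangle$ with $\|S\|$ on $\mathcal{H}_0$ — this is exactly what lets the non-subadditive expression $f_{\alpha,\beta}$ be handled on $\mathcal{H}_0$ — together with the mild observation that the hypothesis on $\mathcal{H}_0$ ensures there is a unit vector in it, so that the suprema in question are over a nonempty set; neither of these is a genuine obstacle, and the proof is in essence a bookkeeping of two standard numerical radius estimates.
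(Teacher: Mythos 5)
Your argument is correct and is essentially the paper's proof: both reduce to the single estimate $(\alpha+\beta)\,w^2(T)\geq \alpha\sup_{\|x\|=1}|\langle T_0x,x\rangle|^2+\frac{\beta}{4}\|T^*T+TT^*\|$, using $w^2(T)\geq|\langle Tx,x\rangle|^2$ for the $\alpha$-term, Kittaneh's lower bound for the $\beta$-term, and the defining property of $\mathcal{H}_0$ to freeze the second summand. The only differences are cosmetic — the paper routes this through the auxiliary expression $f_{\alpha,\beta}$ and cites \cite[Th.~1]{K05}, while you re-derive the pointwise Kittaneh bound and add the (correct but not strictly needed) observation that $\langle Sx,x\rangle=\|S\|$ on $\mathcal{H}_0$.
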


\begin{proof}
We have, 
\begin{eqnarray*}
\sup_{\|x\|=1}f_{\alpha, \beta}(x)&=&\sup_{\|x\|=1}\{\alpha|\langle Tx,x\rangle|^2+\frac{\beta}{4}\|(T^*T+TT^*)x\|\}  \\
&\leq& \alpha w^2(T)+\frac{\beta}{4}\|T^*T+TT^*\|\\
&\leq& (\alpha+\beta) w^2(T).
\end{eqnarray*}
Also, $$\sup_{\|x\|=1,x\in \mathcal{H}_0 }f_{\alpha, \beta}(x)=\alpha \sup_{\|x\|=1} | \langle T_0x,x\rangle |^2+\frac{\beta}{4}\|T^*T+TT^*\|.$$
Therefore, we get,  $$ w^2(T) \geq \frac{1}{\alpha+\beta}\left \{ \alpha \sup_{\|x\|=1} | \langle T_0x,x\rangle |^2+\frac{\beta}{4}\|T^*T+TT^*\| \right \}.$$
Since this holds for all admissible values of $\alpha,\beta$, the first inequality follows. The second inequality follows trivially by considering  the particular case $\alpha=0,\beta=1$.
 \end{proof}

We end this article with the following remark that justifies that Theorem \ref{lower} is a proper refinement of the existing lower bound for the numerical radius as obtained in \cite[Th. 1]{K05}, for a large class of operators.
 
\begin{remark}
$(i)$ We note that, if $\mathcal{H}_0$ is invariant under $T,$  then $\sup_{\|x\|=1}|\langle T_0x,x\rangle|^2= w^2(T_0)\geq \frac{1}{4} \left \|T^*T+TT^* \right \|.$ Therefore, for all $\alpha,\beta,$ 
$$\frac{\alpha}{\alpha+\beta}w^2(T_0)+\frac{\beta}{4(\alpha+\beta)}\left \|T^*T+TT^* \right \|\geq \frac{1}{4} \left \|T^*T+TT^* \right \|.$$ 
Moreover, if $w^2(T_0)> \frac{1}{4} \left \|T^*T+TT^* \right \|$, then the first inequality in Theorem \ref{lower} is strictly sharper than the first inequality in \cite[Th. 1]{K05}, obtained by Kittaneh. \\
$(ii) $ Even if $ \mathcal{H}_0$ is not invariant under $T$ then also the lower bound obtained here may give a better bound than that in \cite[Th. 1]{K05}. As an illustrative example, let us consider $T=\left(\begin{array}{ccc}
	1 & 1 & 0\\
	0 & 0 & 0\\
	0 & 0 & 1
	\end{array}\right).$ Then it is easy to check that $\mathcal{H}_0$ is not invariant under $T$ but  $\sup_{\|x\|=1}|\langle T_0x,x\rangle|^2=\left(\frac{4+3\sqrt{2}}{4+2\sqrt{2}}\right)^2 > \frac{1}{4}\|T^*T + TT^*\| = \frac{2 + \sqrt{2} }{4}$ so that for all $\alpha, \beta $ with $ \alpha \neq 0$ we get,
	$$ w^2(T) \geq \left(\frac{\alpha}{\alpha + \beta}\right) \left(\frac{4+3\sqrt{2}}{4+2\sqrt{2}}\right)^2 + \left (\frac{\beta}{\alpha + \beta} \right)\frac{2+\sqrt{2}}{4} > \frac{2 + \sqrt{2} }{4}.$$ 
	Therefore, for this matrix $T$, the first inequality in Theorem \ref{lower} gives a better bound than that in \cite[Th. 1]{K05}.

\noindent $(iii)$ We also note that if 	$\sup_{\|x\|=1} | \langle T_0x,x\rangle |^2= \frac{1}{4} \left \|T^*T+TT^* \right \|, $ then for all $\alpha,\beta$, 
$$\frac{\alpha}{\alpha+\beta}\sup_{\|x\|=1} | \langle T_0x,x\rangle |^2+\frac{\beta}{4(\alpha+\beta)}\left \|T^*T+TT^* \right \|= \frac{1}{4} \left \|T^*T+TT^* \right \|,$$ i.e., the first inequality in Theorem \ref{lower} and the first inequality in \cite[Th. 1]{K05} give the same bound.

\end{remark}

\bibliographystyle{amsplain}

\end{document}